\newlength\bshft 
\def\fakebold#1{\ThisStyle{\ooalign{$\SavedStyle#1$\cr%
  \kern-\bshft$\SavedStyle#1$\cr%
  \kern\bshft$\SavedStyle#1$}}}
\tikzset{
  on each segment/.style={
    decorate,
    decoration={
      show path construction,
      moveto code={},
      lineto code={
        \path [#1]
        (\tikzinputsegmentfirst) -- (\tikzinputsegmentlast);
      },
      curveto code={
        \path [#1] (\tikzinputsegmentfirst)
        .. controls
        (\tikzinputsegmentsupporta) and (\tikzinputsegmentsupportb)
        ..
        (\tikzinputsegmentlast);
      },
      closepath code={
        \path [#1]
        (\tikzinputsegmentfirst) -- (\tikzinputsegmentlast);
      },
    },
  },
  mid arrow/.style={postaction={decorate,decoration={
        markings,
        mark=at position .5 with {\arrow[#1]{stealth}}
      }}},
}
\newtheorem{thm}{Theorem}[section]
\newtheorem{cor}[thm]{Corollary}
\newtheorem{prop}[thm]{Proposition}
\newtheorem{lem}[thm]{Lemma}
\newtheorem{conj}[thm]{Conjecture}
\newtheorem*{theorem*}{Theorem \ref{TorusLinksOnly2Torsion}}
\newtheorem*{theorem'}{Theorem \ref{nooddtorsion}}
\newtheorem*{theorem"}{Theorem \ref{GeneralTheorem2}}
\theoremstyle{definition}
\newtheorem{defn}[thm]{Definition}
\newtheorem{example}[thm]{Example}
\newcommand\restr[2]{{
  \left.\kern-\nulldelimiterspace 
  #1 
  \vphantom{\big|} 
  \right|_{#2} 
  }}
\newcommand{\R}{\mathbb{R}}
\newcommand{\Z}{\mathbb{Z}}
\newcommand{\I}{\mathcal{I}}
\newcommand{\ob}[1]{\overline{#1}}
\newcommand{\im}{\textnormal{Im}\,}
\newcolumntype{P}[1]{>{\centering\arraybackslash}p{#1}} 
\definecolor{forest}{rgb}{0.03, 0.47, 0.19}
\newtheorem*{thm'}{Theorem \ref{allEMHTorsion}}
\newtheorem*{thm*}{Theorem \ref{posetTor}}
\newtheorem*{cor*}{Corollary \ref{Euler_diag}}
\newtheorem*{thm**}{Theorem \ref{CW_thm}}
\title{Torsion in magnitude homology theories}
\thanks{RS partially supported by the  NSF Grant DMS 1854705.}
\author{Patrick Martin II}
\address{Department of Mathematics\\
North Carolina State University\\
Raleigh, NC}
\email{pmmarti6@ncsu.edu}
\author{Radmila Sazdanovi\'{c}}
\address{Department of Mathematics\\
North Carolina State University\\
Raleigh, NC}
\email{rsazdanovic@math.ncsu.edu}
\begin{document}
\maketitle

\begin{abstract} 
In this article, we analyze the structure and relationships between magnitude homology and Eulerian magnitude homology of finite graphs. Building on the work of Kaneta and Yoshinaga, Sazdanovic and Summers, and Asao and Izumihara, we provide two proofs of the existence of torsion in Eulerian magnitude homology, offer insights into the types and orders of torsion, and present explicit computations for various classes of graphs.
\end{abstract}

\section{Introduction}\label{intro}
In 2006 Tom Leinster introduced the notion of an Euler characteristic for a finite category \cite{leinster2008euler}. Rooted in Rota's theory for posets \cite{rota1964foundations, stanley2011enumerative}, this Euler characteristic exhibits an inclusion-exclusion formula and other cardinality-like properties, which inspired its new name: magnitude \cite{Mag_roundup}.
Under the correct conditions, magnitude was shown to be the same thing as the Poincaré polynomial for posets and hyperplane arrangements \cite{asao2023magnitude}, further connecting it to theories in combinatorics. Magnitude has also recently found applications for edge detection in images and can potentially be used as a tool in machine learning research \cite{Magnitude_of_Vector_Images}.

Viewing graphs as metric spaces with the path-distance allows for defining magnitude for graphs \cite{leinster2019magnitude}: a formal power series over $\mathbb{Z}.$  Magnitude, like many other graph invariants such as the chromatic and Tutte polynomials has been categorified using Khovanov's framework \cite{khovanov1999categorification} first used to categorify the Jones polynomial. Therefore, the graded Euler characteristic of magnitude homology \cite{hepworth2015categorifying} defined by Hepworth and Willerton is the magnitude of a graph.  
 
 Properties of magnitude lift to magnitude homology e.g., the inclusion-exclusion formula for magnitude lifts to a Mayer-Vietoris sequence in magnitude homology. Magnitude and magnitude homology are invariant under graph operations such as Whitney and sycamore twists \cite{roff2022magnitude}. However, magnitude homology is a strictly stronger invariant than magnitude, as first shown by Gu \cite{gu2018graph} who discovered graphs with equal  magnitude but contained distinct magnitude homology groups. The question for the existence of torsion in magnitude homology was posed by Hepworth and Willerton \cite{hepworth2015categorifying} and was answered affirmatively first by Kaneta and Yoshinaga \cite{Kaneta_Yoshinaga}. Their result focused on $\mathbb{Z}_2$ torsion arising from relations with triangulations of $\mathbb{R}P^2.$ Extending their approach to generalized lens spaces, Sazdanovic and Summers \cite{sazdanovic2021torsion} show that any finitely generated abelian group may appear
as a subgroup of the magnitude homology of a graph, and, in particular, that torsion
of a given prime order can appear in the magnitude homology of infintely many graphs. Caputi and Collari \cite{caputi2024finite} establish relations between the existence of high-order torsion types and  complexity of the graph determined by its genus, and that both torsion types and the ranks of the magnitude of any given graph are bounded. Asao and Hiraoka \cite{asao2024girth} further relate the girth of the graph with magnitude homology, showing that the increase in girth leads to the thinning of magnitude homology, eventually leaving just a single diagonal \cite{tajima2021magnitude}. 

In 2024 Giusti and Menara  \cite{giusti2024eulerian} defined Eulerian Magnitude by removing the redundancy of magnitude chains between consecutive groups by prohibiting repetitions of vertices and its complement, discriminant magnitude homology. The Eulerian magnitude chain complex is a subcomplex of magnitude, so strictly smaller in size and more computable \cite{menara2024computing} and providing new pathways for relating structural properties of graphs and magnitude. Structure, torsion, and relations between these magnitude theories are explored in further detail in \cite{menara2024torsion, asao2024girth}. 

The main goal of this work is to analyze torsion, type and order, in magnitude homology theories and provide further insights into relations between these theories \cite{giusti2024eulerian, asao2024girth}. 
In particular,  we extend the Sazdanovic and Summers result on the existence of torsion to Eulerian magnitude homology, showing that any finitely generated group can be a subgroup of Eulerian magnitude.

\begin{thm'}
    Given a finitely generated abelian group
$A=\Z^r\oplus\Z_{p_1^{r_1}}\oplus\cdots\oplus\Z_{p_n^{r_n}} $
    there exists a graph $G$ and positive integers $k$ and $\ell$ such that 
    $A\trianglelefteq EMH_{k,\ell}(G).$
\end{thm'}

In addition, we provide independent proof of existence of torsion in Eulerian magnitude based on Asao-Izumihara approach \cite{asao2020geometric}. We prove that the existence of torsion in the order complex of some poset implies the existence of the same torsion in  the magnitude and Eulerian magnitude homologies of the graph  representing
the Hasse diagram of that poset. 

\begin{thm*} 
  For a ranked poset $P$, the following holds for $\hat0,\hat1\in V(\mathcal{G}(\widehat{P}))$: 
    \[MH_{\ast+2,\text{rk}(\widehat{P})}(\hat0,\hat1)\cong H_\ast(\Delta(P))\cong EMH_{\ast+2,\text{rk}(\widehat{P})}(\hat0,\hat1).\]
\end{thm*}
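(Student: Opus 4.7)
The plan is to construct an explicit chain-level isomorphism between the magnitude chain complex $MC_{\ast+2,\mathrm{rk}(\widehat P)}(\hat 0,\hat 1)$, its Eulerian analogue $EMC_{\ast+2,\mathrm{rk}(\widehat P)}(\hat 0,\hat 1)$, and the simplicial chain complex of the order complex $\Delta(P)$. The two isomorphisms of the statement then follow by passing to homology.

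First I would record the distance formula in the Hasse diagram graph $\mathcal{G}(\widehat P)$: every edge changes rank by $\pm 1$, so the triangle inequality forces $d(u,v)\ge |\mathrm{rk}(u)-\mathrm{rk}(v)|$, with equality whenever $u,v$ are comparable in $\widehat P$ (a saturated chain realizes a geodesic). In particular, $d(\hat 0,\hat 1)=r:=\mathrm{rk}(\widehat P)$.

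Next I would analyze the generators of $MC_{n,r}(\hat 0,\hat 1)$. For a tuple $(\hat 0=x_0,x_1,\dots,x_{n-1}=\hat 1)$ with total length $r$, the telescoping bound
\[
\sum_i d(x_i,x_{i+1})\;\ge\;\sum_i |\mathrm{rk}(x_{i+1})-\mathrm{rk}(x_i)|\;\ge\;r
\]
must be saturated throughout. This forces every consecutive pair to be comparable and the ranks to be strictly increasing along the tuple, so $(x_1,\dots,x_{n-2})$ is a strict chain in the proper part $P$; in particular, its entries are automatically all distinct. Consequently every magnitude chain in this bi-grade is already an Eulerian chain, which identifies $EMC_{\ast,r}(\hat 0,\hat 1)$ with $MC_{\ast,r}(\hat 0,\hat 1)$ and supplies the second isomorphism at the chain level.

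Finally, the assignment $(\hat 0,x_1,\dots,x_{k+1},\hat 1)\longmapsto [x_1<\cdots<x_{k+1}]$ is a bijection between generators of $MC_{k+2,r}(\hat 0,\hat 1)$ and $k$-simplices of $\Delta(P)$. The same telescoping shows that removing any interior vertex preserves the total length, so the Hepworth--Willerton differential restricts to the full alternating sum of interior removals and matches, up to an overall sign, the simplicial boundary of $\Delta(P)$. Passing to homology then yields the first isomorphism. The real work is bookkeeping: tracking the ``$\ast+2$'' shift (a tuple with $k+1$ interior entries corresponds to a $k$-simplex, with the terminal pair $(\hat 0,\hat 1)$ playing the role of the augmentation) and reconciling the two sign conventions; once these are settled, everything reduces to the rank-distance identity.
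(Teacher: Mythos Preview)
Your argument is correct, but it takes a different route from the paper's. The paper proves this theorem by invoking the Asao--Izumihara simplicial model: it quotes the isomorphisms $H_k(K_\ell(a,b),K'_\ell(a,b))\cong MH_{k+2,\ell}(a,b)$ and $H_k(ET_{\leq\ell}(a,b),ET_{\leq\ell-1}(a,b))\cong EMH_{k+2,\ell}(a,b)$ as black boxes, and then observes that for $a=\hat 0$, $b=\hat 1$, $\ell=\mathrm{rk}(\widehat P)$ the subcomplexes $K'_\ell$ and $ET_{\leq\ell-1}$ are empty (no shorter paths exist) while $K_\ell=ET_{\leq\ell}=\Delta(P)$ on the nose. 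Your approach bypasses that machinery entirely and builds the chain isomorphism with $C_\ast(\Delta(P))$ by hand, using the rank--distance telescoping to pin down the generators of $MC_{\ast,r}(\hat 0,\hat 1)$ and to verify that every interior deletion is length-preserving. This is more elementary and self-contained; the paper's route, by contrast, shows how the result slots into the general Asao--Izumihara framework. Interestingly, your rank argument is essentially the reasoning the paper uses earlier (in its Section~3 Proposition) only for the $EMC\cong MC$ identification---you simply push it further to get the comparison with $\Delta(P)$ directly.
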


So far, all known torsion groups are  common to the magnitude and Eulerian magnitude, derived using Kaneta and Yoshinaga approach based on  triangulations of manifolds.  We show that there exist an infinite family of ranked posets with 2-torsion in the homology of their order complexes which are not face posets, and give the explicit equivalence class which generates this torsion subgroup. This result turns out to be a special case of the more general result in Theorem \ref{CW_thm}, which we use to provide explicit examples of graphs with $3$-torsion and $5$-torsion in their (Eulerian) magnitude homology.

\begin{thm**}
    Let $K$ be a finite dimensional regular CW complex with finitely many cells in each dimension. Then there exists an isomorphism 
    \[EMH_{k+2,\text{rk}(\widehat{\mathcal{F}}(K))}(\hat0,\hat1)\cong H_k(\Delta(\mathcal{F}(K)))\cong MH_{k+2,\text{rk}(\widehat{\mathcal{F}}(K))}(\hat0,\hat1)\]
    where $\hat0,\hat1\in V(\mathcal{G}(K))$ are the appropriate vertices corresponding to $\hat0,\hat1\in\widehat{\mathcal{F}}(K)$.
\end{thm**}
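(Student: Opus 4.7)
The plan is to derive this theorem as a direct corollary of Theorem \ref{posetTor} applied to the face poset $P=\mathcal{F}(K)$. The whole task then reduces to two things: verifying that $\mathcal{F}(K)$ satisfies the hypothesis of that theorem (i.e.\ is a ranked poset), and identifying $\mathcal{G}(K)$ with the Hasse diagram of $\widehat{\mathcal{F}}(K)$ so that the vertex labels $\hat0,\hat1\in V(\mathcal{G}(K))$ correspond to the formal minimum and maximum adjoined to $\mathcal{F}(K)$.

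For the ranking step, I would show that $\rho(\sigma)=\dim\sigma$ defines a rank function on $\mathcal{F}(K)$. This is a standard feature of regular CW complexes: regularity guarantees that the closure of each $n$-cell $\sigma$ is homeomorphic to $D^n$ and its boundary inherits a CW subcomplex structure homeomorphic to $S^{n-1}$ whose cells are exactly the proper faces of $\sigma$ in $\mathcal{F}(K)$. An induction on dimension then shows that every maximal chain in $\mathcal{F}(K)$ from a $0$-cell up to $\sigma$ has length $\dim\sigma$. The graph $\mathcal{G}(K)$ is by construction the Hasse diagram of $\widehat{\mathcal{F}}(K)$, with its two extra vertices being precisely $\hat0$ and $\hat1$, so the vertex correspondence in the theorem statement is automatic. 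With these in hand, Theorem \ref{posetTor} applied to $P=\mathcal{F}(K)$ gives the chain of isomorphisms directly, since $\text{rk}(\widehat{P})=\text{rk}(\widehat{\mathcal{F}}(K))$.

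The main subtlety I expect is the rankedness of $\widehat{\mathcal{F}}(K)$ as opposed to $\mathcal{F}(K)$ itself. For non-pure $K$---when maximal cells have differing dimensions---the extended poset $\widehat{\mathcal{F}}(K)$ fails to be ranked in the strict sense, because maximal chains from $\hat0$ to $\hat1$ have different lengths. I would resolve this either by restricting attention to pure complexes or by interpreting $\text{rk}(\widehat{\mathcal{F}}(K))$ as the length of the longest maximal chain; with the latter interpretation, every magnitude (or Eulerian magnitude) chain contributing to $MH_{*,\text{rk}(\widehat{\mathcal{F}}(K))}(\hat0,\hat1)$ must pass through a top-dimensional cell, and the proof of Theorem \ref{posetTor} applies to these chains without modification. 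A secondary check worth doing is that the identification $\Delta(\mathcal{F}(K))\cong\operatorname{sd}(K)$ (the barycentric subdivision) is compatible with the identifications used in Theorem \ref{posetTor}, which confirms that the middle term is truly $H_\ast(K)$ when one wishes to apply the result to concrete CW complexes constructed to exhibit prescribed torsion.
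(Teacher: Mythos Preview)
Your proposal is correct and matches the paper's approach: the theorem is stated without a separate proof and is meant to follow directly from Theorem \ref{posetTor} applied to the ranked poset $P=\mathcal{F}(K)$, with $\mathcal{G}(K)$ being by definition the Hasse diagram of $\widehat{\mathcal{F}}(K)$. Your observation about the pureness subtlety is a genuine issue the paper glosses over; your proposed resolution (restricting to pure complexes or reading $\text{rk}(\widehat{\mathcal{F}}(K))$ as the maximal chain length) is the right fix.
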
  

Finally, we explore the relationship between Eulerian,  discriminant and magnitude homology, and give explicit computations for the ranks of the Eulerian and discriminant magnitude homology groups for some classes of trees, as well as the Eulerian magnitude homology for complete graphs. We also give an important result about diagonality in Eulerian magnitude homology, which is a corollary of another result about the `maximum Eulerian magnitude homology group' for a given graph.

\begin{cor*}
    A graph $G$ has diagonal Eulerian magnitude homology if and only if $G$ is a complete graph.
\end{cor*}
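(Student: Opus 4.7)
The plan is to split the biconditional and invoke the preceding ``maximum Eulerian magnitude homology group'' theorem for the nontrivial direction. The forward direction is immediate from the definitions: when $G=K_n$, any two distinct vertices are at distance $1$, so every Eulerian tuple $(x_0,\ldots,x_k)$ of $k+1$ distinct vertices has total length $\sum_{i=1}^{k}d(x_{i-1},x_i)=k$. Hence the Eulerian chain groups $EMC_{k,\ell}(G)$ are supported entirely on the line $\ell=k$, which forces $EMH_{k,\ell}(G)=0$ off the diagonal.

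For the converse I would argue the contrapositive: if $G$ is not complete, I will produce a nonzero off-diagonal Eulerian magnitude homology class. Non-completeness supplies two distinct non-adjacent vertices $u,v$, so $d(u,v)\geq 2$. The idea is to promote this data into a nontrivial class at the bidegree identified by the maximum-EMH-group theorem. Since Eulerian chains require all vertices distinct, the top meaningful chain degree is $k=|V(G)|-1$, and I expect the referenced theorem to describe $EMH_{k,\ell}(G)$ at this extremal $k$ directly from the chain group, since there is no $EMC_{k+1,*}$ to supply boundary images. Any ordering of the vertices of $G$ that places $u$ and $v$ consecutively then yields a chain with $\ell=(n-2)+d(u,v)>n-1=k$, witnessing off-diagonal homology.

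The main obstacle will be confirming that this off-diagonal chain represents a genuinely nonzero class, i.e.\ that it lies in $\ker\partial$ and is not erased by the maximum-group theorem's description. The image side is controlled by the vanishing of higher Eulerian chain groups, so the real content is the kernel analysis: deleting any interior vertex from the chain either destroys shortest-path additivity at the jump between $u$ and $v$ (so that summand of $\partial$ vanishes) or shifts to a different bidegree in which the length no longer matches. Once this is made precise using the exact formulation of the maximum-group theorem, the corollary follows at once, and in particular shows that the obstruction to diagonality is exactly the presence of a single non-adjacent pair of vertices.
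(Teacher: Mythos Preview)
Your forward direction and the overall strategy for the converse (contrapositive via the maximum-EMH-group theorem) match the paper. The gap is in your kernel analysis of the specific witness path. You claim that deleting an interior vertex from your $k_{\max}$-path always drops the length, but this fails whenever the deleted vertex happens to lie on a geodesic between its neighbours in the tuple and is \emph{not} one of $u,v$. For instance, take $G=P_4$ on vertices $0,1,2,3$; with the non-adjacent pair $u=0$, $v=2$ your recipe produces, say, the $3$-path $(1,0,2,3)$ of length $1+2+1=4$. Deleting the interior vertex $2$ yields $(1,0,3)$, and since $d(0,2)+d(2,3)=3=d(0,3)$ the length is preserved, so $\partial(1,0,2,3)=(1,0,3)\neq 0$. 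Your chain is not a cycle, and since $EMC_{2,4}(P_4)\neq 0$ the bigrading $(3,4)$ is not the one controlled by the maximum-group theorem either.

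The repair is to stop tracking a particular path. The existence of \emph{any} $k_{\max}$-path of length strictly greater than $k_{\max}$---which your construction does supply---already forces $\ell_{\max}>k_{\max}$. Now invoke the maximum-group theorem at the bigrading $(k_{\max},\ell_{\max})$: there it asserts $EMH_{k_{\max},\ell_{\max}}(G)=EMC_{k_{\max},\ell_{\max}}(G)\neq 0$, with both the kernel and image checks already absorbed into that theorem's proof. This is exactly the paper's argument: it simply chains the equivalences ``$G$ diagonal $\Leftrightarrow \ell_{\max}=k_{\max}$'' (from nontriviality of $EMH_{\max}$) and ``$\ell_{\max}=k_{\max}\Leftrightarrow G$ complete'' (the immediately preceding theorem), never needing to verify that any individually constructed path is a cycle.
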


This paper is organized as follows. In Section \ref{back}, we give a construction of magnitude and Eulerian magnitude homology. In Sections \ref{tor} and \ref{Asa0Izumihara}, we provide two independent proofs of the existence of any finitely generated abelian group as a subgroup of the Eulerian magnitude homology. In Section \ref{noTriang} we provide an infinite family of examples with $\mathbb{Z}_2$ torsion on their magnitude that is independent of Kaneta-Yoshinaga and other currently known methods for obtaining torsion. In Section \ref{CW} we show that the Kaneta-Yoshinaga construction can be strengthened to include a wider class of objects which we then use to provide explicit examples of graphs with torsion in their magnitude homologies. In Section \ref{meaning} we provide further insight into the relation between magnitude homology theories and explicit computations for special families of graphs. Finally, we provide some conjectures and potential ideas for future research in Section \ref{Conj}.

\section{Background}\label{back}
In the course of this paper, given a graph $G$, $V(G)$ always represents the vertices of $G$ and $E(G)$ always represents the edges of $G$, and we require that both $|V(G)|$ and $|E(G)|$ be finite. A trail in $G$ from $a$ to $b$ with $a,b\in V(G)$ is an ordered tuple of vertices $(a,v_1,...,v_m,b)$, $v_i\in V(G)$, such that $\{a,v_1\},\{v_{i},v_{i+1}\},\{v_m,b\}\in E(G)$ for all $1\leq i\leq m-1$. Then the distance between any two vertices $a,b\in V(G)$ is 
    \[d(a,b):=\min\{m+1:(a,v_1,...,v_m,b)\text{ is a trail in }G\}\]
In other words, the distance between any two vertices is the shortest amount of edges needed to be able to connect the vertex $a$ to the vertex $b$. By convention, $d(a,b)=\infty$ if a trail from $a$ to $b$ cannot be found in $G$, and $d(a,a)=0$. 
\begin{defn}
    The \textbf{similarity matrix} of a graph $G$ is the matrix $\zeta_G$ indexed by the vertices of $G$ such that the $(a,b)$-entry in $\zeta_G$ is $\zeta_G(a,b)=q^{d(a,b)}$. 
\end{defn}
Note that evaluating $q=0$ yields the identity matrix implying that $\det (\zeta_G)$ has constant term $1$ and is thus invertible over $\Z[\![q]\!]$. Hence, $\zeta_G$ is always invertible for any graph $G$  \cite{leinster2019magnitude}.
\begin{defn}
    The \textbf{Möbius matrix} of a graph $G$ is the matrix $\zeta_G^{-1}$. The \textbf{magnitude} $\#G$ of a graph $G$ is the sum of all the entries of the Möbius matrix of $G$:
        \[\#G:=\sum_{a,b\in V(G)}\zeta_G^{-1}(a,b).\]
\end{defn}
The magnitude of a graph is a power series over $\Z$, and although it would seem that this would make it hard to compute, Leinster provides a simple formula \cite{leinster2019magnitude}. 
\begin{prop}\label{magnitude_graph_equality}
    Magnitude $\#G$ of  any graph $G$ is equal to 
        \[\#G=\sum_{k=0}^\infty(-1^k)\sum_{\substack{0\leq i<k \\x_i\neq x_{i+1},x_{k-1\neq x_k}\\ x_i,x_k\in V(G)}}q^{d(x_0,x_1)+d(x_1,x_2)+\cdots+d(x_{k-1},x_k)}.\]
\end{prop}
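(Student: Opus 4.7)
The plan is to compute $\zeta_G^{-1}$ via a geometric (Neumann) series expansion. Write $\zeta_G = I + A$, where $A := \zeta_G - I$ has zero diagonal and off-diagonal entries $A(a,b) = q^{d(a,b)}$. Since $d(a,b) \geq 1$ whenever $a \neq b$, every entry of $A$ lies in $q\,\Z[\![q]\!]$. I would first verify that this implies every entry of $A^k$ lies in $q^k\,\Z[\![q]\!]$; this is an easy induction, because the $(a,b)$-entry of $A^k$ is a sum of products of $k$ factors each of $q$-valuation at least $1$. Consequently, the sum $\sum_{k \geq 0} (-1)^k A^k$ converges in the matrix ring over $\Z[\![q]\!]$, and the standard identity $(I+A)\sum_{k\geq 0}(-A)^k = I$ holds entry-wise, giving
\[\zeta_G^{-1} \;=\; \sum_{k=0}^\infty (-1)^k A^k.\]

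Next, I would expand $(A^k)_{a,b}$ combinatorially: by the definition of matrix multiplication, this entry is
\[\sum_{\substack{x_0,x_1,\ldots,x_k \in V(G) \\ x_0 = a,\; x_k = b \\ x_i \neq x_{i+1} \text{ for all } 0 \leq i < k}} q^{d(x_0,x_1) + d(x_1,x_2) + \cdots + d(x_{k-1},x_k)},\]
since the diagonal of $A$ vanishes, forcing consecutive entries of any contributing sequence to differ. Substituting into the series expression for $\zeta_G^{-1}$ and then summing over $a,b \in V(G)$ (so that $x_0$ and $x_k$ range freely over the vertex set) yields exactly the claimed formula for $\#G = \sum_{a,b}\zeta_G^{-1}(a,b)$.

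There is not much of an obstacle here; the only subtle point is the formal convergence, which requires observing that for each fixed power of $q$ only finitely many terms contribute across all $k$. This is guaranteed by the valuation bound $A^k \in q^k\,M_{V(G)}(\Z[\![q]\!])$ combined with the finiteness of $V(G)$: for any $N$, only $k \leq N$ can contribute to the coefficient of $q^N$, and for each such $k$ there are only finitely many sequences of length $k+1$ in $V(G)$. Hence the double summation is well defined in $\Z[\![q]\!]$ and the identity of formal power series follows.
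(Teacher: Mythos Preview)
Your argument is correct and is precisely the standard proof: write $\zeta_G = I + A$ with $A$ having entries in $q\,\Z[\![q]\!]$, invert via the geometric series $\sum_{k\geq 0}(-A)^k$, and expand each $A^k$ as a sum over sequences $(x_0,\ldots,x_k)$ with $x_i\neq x_{i+1}$. The convergence justification you give is exactly what is needed.

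Note, however, that the paper does not actually supply its own proof of this proposition. It is stated with attribution to Leinster \cite{leinster2019magnitude} (``Leinster provides a simple formula''), so there is no in-paper argument to compare against. Your Neumann-series approach is in fact the same one Leinster uses in the cited reference, so you have reconstructed the original proof.
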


Next we provide definitions of basic objects used in the categorification of magnitude by Hepworth and Willerton \cite{hepworth2015categorifying}. 

\begin{defn}
    A \textbf{$k$-path} in $G$ is a $(k+1)$-tuple of vertices $(v_0,...,v_k)\in V(G)^{k+1}$ such that $v_i\neq v_{i+1}$ and $d(v_i,v_{i+1})<\infty$ for $0\leq i\leq k-1$. The \textbf{length of a $k$-path} is defined to be the sum of lengths of consecutive vertices:
        \[\text{len}(v_0,...,v_k):=\sum_{i=0}^{k-1}d(v_i,v_{i+1}).\]
\end{defn}

\begin{defn}
    Let $P_k(G)$ be the set of all $k$-paths in $G$. The \textbf{$(k,\ell)$-magnitude chain group} of a graph $G$ is generated by all $k$-paths of length $\ell$:   
        \[MC_{k,\ell}(G):=\Z\langle v\in P_k(G):\text{len}(v)=\ell\rangle.\]
\end{defn}

\begin{prop}[Lemma 11 \cite{hepworth2015categorifying}]
    For $v\in P_k(G)$ let $v^{\setminus i}\in P_{k-1}(G)$ be the $(k-1)$-path with the $i$-th vertex in $v$ removed. Define the map $\partial_{k,\ell}^i:MC_{k,\ell}(G)\to MC_{k-1,\ell}(G)$ on the generators
        \[\partial_{k,\ell}^i(v):=\begin{cases}
            v^{\setminus i} & \text{len}(v^{\setminus i})=\ell \\ 
            0 & \text{otherwise}.
        \end{cases}\]
    and then define the map $\partial_{k,\ell}:MC_{k,\ell}(G)\to MC_{k-1,\ell}(G)$ on the generators
        \[\partial_{k,\ell}(v):=\sum_{i=1}^{k-1}(-1)^i\partial_{k,\ell}^i(v).\]
    Then $\partial_{k,\ell}\partial_{k-1,\ell}(v)=0$.
\end{prop}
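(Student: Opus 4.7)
The statement as printed looks like a typo—dimensional consistency forces the composition to be $\partial_{k-1,\ell}\circ\partial_{k,\ell}$, landing in $MC_{k-2,\ell}(G)$—so that is what I will prove. The plan is to adapt the standard simplicial $\partial^2=0$ argument, adding one short monotonicity lemma to handle the length constraint that can kill individual face operators.

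The first step is to expand
\[
\partial_{k-1,\ell}\,\partial_{k,\ell}(v)=\sum_{j=1}^{k-2}\sum_{i=1}^{k-1}(-1)^{i+j}\,\partial_{k-1,\ell}^{\,j}\partial_{k,\ell}^{\,i}(v)
\]
and reindex by the unordered pair $\{a,b\}$ with $a<b$ of positions of $v=(v_0,\ldots,v_k)$ that get deleted by the composition. Exactly two summands contribute to each such pair: the pair $(i,j)=(b,a)$, which deletes $v_b$ first and $v_a$ second, and the pair $(i,j)=(a,b-1)$, which deletes $v_a$ first and then $v_b$ (now at position $b-1$ of $v^{\setminus a}$). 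The signs are $(-1)^{a+b}$ and $(-1)^{a+b-1}$, so the two contributions already carry opposite signs, matching the standard cancellation pattern.

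The second step is to verify that, when nonzero, both summands in a pair output the same tuple $v^{\setminus\{a,b\}}$. The underlying tuple is obviously the same regardless of order; the only content is whether the intermediate single-deletion preserves the length $\ell$. The key input is the monotonicity lemma that for any tuple $u$ and any interior index $i$, the triangle inequality yields $\text{len}(u^{\setminus i})\leq \text{len}(u)$. Iterating gives
\[
\ell=\text{len}(v)\ \geq\ \text{len}(v^{\setminus a})\ \geq\ \text{len}(v^{\setminus\{a,b\}}),
\]
and symmetrically $\ell\geq \text{len}(v^{\setminus b})\geq \text{len}(v^{\setminus\{a,b\}})$. Hence if $\text{len}(v^{\setminus\{a,b\}})=\ell$ the chain forces both intermediate lengths to equal $\ell$, so neither partial deletion is killed and both orderings output $v^{\setminus\{a,b\}}$, canceling against each other. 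If instead $\text{len}(v^{\setminus\{a,b\}})<\ell$ the outer face operator in each ordering sends the intermediate to $0$, so both summands vanish. Summing over all pairs $\{a,b\}$ gives the identity.

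I expect the main obstacle to be the monotonicity lemma in the consecutive case $b=a+1$, since then the second deletion acts on a tuple from which $v_a$ has already been removed, and the length change is not the sum of two independent single-deletion deficits. The needed identity
\[
\text{len}(v^{\setminus a})-\text{len}(v^{\setminus\{a,a+1\}})=d(v_{a-1},v_{a+1})+d(v_{a+1},v_{a+2})-d(v_{a-1},v_{a+2})\geq 0
\]
is still just the triangle inequality, now on the triple $(v_{a-1},v_{a+1},v_{a+2})$, but it must be checked explicitly. For non-consecutive pairs the length changes are supported on disjoint vertex triples and the chain of inequalities is immediate. Sign bookkeeping is otherwise routine.
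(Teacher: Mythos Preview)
Your proof is correct. The paper does not supply its own proof of this proposition; it simply cites Lemma 11 of Hepworth--Willerton and moves on, so there is nothing in the paper itself to compare against. Your argument is exactly the standard one from that reference: pair the double-face terms via the simplicial identity $\partial^j\partial^i=\partial^{i-1}\partial^j$ for $j<i$, and use the triangle-inequality monotonicity $\text{len}(u^{\setminus i})\le\text{len}(u)$ to guarantee that either both terms in a cancelling pair survive the length filter (and cancel by sign) or both vanish. Your observation about the index typo in the stated composition is also correct.

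One minor wording quibble: in the case $\text{len}(v^{\setminus\{a,b\}})<\ell$ you write that ``the outer face operator in each ordering sends the intermediate to $0$,'' but it may instead be the \emph{inner} face operator that already annihilates $v$ (when the single-deletion length is itself $<\ell$). Either way the composite is zero, so the conclusion is unaffected; the sentence just slightly overstates where the vanishing occurs.
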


\begin{defn}
    The magnitude homology is the bigraded homology of the magnitude chain complex $(MC_{k,\ell}(G), \partial_{k,\ell})$ of a graph $G.$ More precisely,  \textbf{$(k,\ell)$-magnitude homology group} of a graph $G$ is 
        \[MH_{k,\ell}(G):=\ker\partial_{k,\ell}/\im\partial_{k+1,\ell}.\]
\end{defn}

Proposition \ref{magnitude_graph_equality} is used to show that  graph magnitude homology is indeed a categorification of the magnitude of a graph, result first proven by 
Hepworth and Willerton.

\begin{prop}[Theorem 8 \cite{hepworth2015categorifying}]
    Magnitude of a graph $G$ is equal to the Euler characteristic of Magnitude homology: 
    \[\#G=\sum_{k,\ell\geq0}(-1)^k\text{rank}(MH_{k,\ell}(G))q^\ell.\]
\end{prop}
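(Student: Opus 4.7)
The plan is to derive the identity by applying the Euler--Poincar\'e principle to the bigraded magnitude chain complex one diagonal at a time, then matching the resulting generating function against the combinatorial formula for $\#G$ supplied by Proposition \ref{magnitude_graph_equality}. The only real content is bookkeeping: identifying the three parameters ($k$, $\ell$, and the tuple of vertices) on both sides.

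First, fix a length $\ell \geq 0$ and consider the length-$\ell$ strand of the magnitude chain complex, namely $(MC_{*,\ell}(G),\partial_{*,\ell})$. This is a chain complex of finitely generated free abelian groups, and it is bounded: a $k$-path $(v_0,\dots,v_k)$ has length at least $k$ because $d(v_i,v_{i+1})\geq 1$ whenever $v_i\neq v_{i+1}$, so $MC_{k,\ell}(G)=0$ for $k>\ell$. Consequently the standard Euler--Poincar\'e identity applies and yields
\[\sum_{k\geq 0}(-1)^k\,\rank\bigl(MC_{k,\ell}(G)\bigr)\;=\;\sum_{k\geq 0}(-1)^k\,\rank\bigl(MH_{k,\ell}(G)\bigr).\]

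Second, compute the left-hand side combinatorially. Since $MC_{k,\ell}(G)$ is free on the set of $k$-paths of length $\ell$, its rank counts tuples $(v_0,\dots,v_k)\in V(G)^{k+1}$ with $v_i\neq v_{i+1}$ for all $0\leq i<k$ and $\sum_{i=0}^{k-1}d(v_i,v_{i+1})=\ell$. Multiplying by $q^\ell$ and summing over $\ell\geq 0$ gives
\[\sum_{\ell\geq 0}q^\ell\rank\bigl(MC_{k,\ell}(G)\bigr)\;=\;\sum_{\substack{(v_0,\dots,v_k)\in V(G)^{k+1}\\ v_i\neq v_{i+1}}}q^{d(v_0,v_1)+\cdots+d(v_{k-1},v_k)}.\]
Now multiply the Euler--Poincar\'e equation above by $q^\ell$, sum over $\ell$, and exchange the order of summation over $k$ and $\ell$ (legitimate because for each total degree only finitely many terms contribute). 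The left-hand side is exactly the formula from Proposition \ref{magnitude_graph_equality}, which equals $\#G$, while the right-hand side is $\sum_{k,\ell\geq 0}(-1)^k\rank(MH_{k,\ell}(G))q^\ell$.

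The only step requiring care is justifying the interchange of sums and confirming that the indexing conventions match. The condition ``$v_i\neq v_{i+1}$'' in the definition of a $k$-path is identical to the conditions appearing in Proposition \ref{magnitude_graph_equality} (the listed conditions ``$x_i\neq x_{i+1}$, $x_{k-1}\neq x_k$'' together with $0\leq i<k$ simply assert that no two consecutive entries coincide), so no generators are lost or gained; and the boundedness observation $MC_{k,\ell}(G)=0$ for $k>\ell$ ensures that for each fixed $\ell$ the inner alternating sum is finite, which is all that is needed to invoke Euler--Poincar\'e and to reorder the resulting double sum. This is the only potentially subtle point; everything else is a direct translation between the combinatorial and homological descriptions.
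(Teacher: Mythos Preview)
Your argument is correct and is exactly the standard Euler--Poincar\'e computation. The paper itself does not supply a proof of this proposition: it is stated as a citation of Theorem~8 in Hepworth--Willerton, preceded only by the remark that Proposition~\ref{magnitude_graph_equality} is the ingredient used to establish it. Your write-up fills in precisely that gap, and the approach you take (Euler--Poincar\'e on each fixed-$\ell$ strand, then summing over $\ell$) is the same one Hepworth and Willerton use in their original paper, so there is nothing to contrast.

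One cosmetic point worth recording for completeness: the definition of a $k$-path in the paper also requires $d(v_i,v_{i+1})<\infty$, whereas the sum in Proposition~\ref{magnitude_graph_equality} is written over all tuples with consecutive entries distinct. These agree because a term with some $d(v_i,v_{i+1})=\infty$ contributes $q^\infty=0$ to the magnitude formula, so such tuples are silently excluded on both sides. This does not affect your argument.
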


Among the discussion of magnitude's cardinality-like properties by Leinster in \cite{leinster2019magnitude}, he reveals that for a ring $R$ the only $R$-valued graph invariant that satisfies inclusion-exclusion in order. The magnitude of a graph then, cannot have an inclusion-exclusion formula for graphs in general. However, under fairly plausible conditions on the graph its magnitude can exhibit an inclusion-exclusion formula which lifts to a Mayer-Vietoris sequence in its magnitude homology.

\begin{defn}
    A subgraph $H$ of $G$ is convex in $G$ if $d_H(a,b)=d_G(a,b)$ for all $a,b\in H$, where $d_H$ and $d_G$ are the distance functions with respect to each graph.
\end{defn}

\begin{defn}
    If $H$ is a subgraph of $G$, and $V_H(G)$ are those vertices $v\in V(G)$ such that $d(v,w)<\infty$ for all $w\in V(H)$, then $G$ projects onto $H$ if for each $x\in V_H(G)$ there exists a $\pi(x)\in V(H)$ such that $d(x,h)=d(x,\pi(x))+d(\pi(x),h)$. 
\end{defn}

\begin{lem}
   [Theorem 4.9 \cite{leinster2019magnitude}] If $H$ and $K$ are subgraphs of $G$ such that $G=H\cup K$, $H\cap K$ is convex in $G$, and either $H$ or $K$ projects onto $H\cap K$ then $\#G=\#H+\#K-\#(H\cap K)$.
\end{lem}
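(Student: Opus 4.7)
The plan is to prove the identity via weightings. A \emph{weighting} on a graph $\Gamma$ is a function $w\colon V(\Gamma)\to\Z[\![q]\!]$ satisfying $\sum_{y\in V(\Gamma)} \zeta_\Gamma(x,y)\,w(y) = 1$ for every $x\in V(\Gamma)$; since $\zeta_\Gamma$ is invertible, a weighting exists and is unique, and summing the relations against $1$ yields $\#\Gamma = \sum_v w(v)$. So it suffices to exhibit a weighting on $G$ whose sum is $\#H + \#K - \#(H\cap K)$.

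Let $w_H$, $w_K$, and $w_{H\cap K}$ be the unique weightings on the three subgraphs, extend each by zero to $V(G)$, and define $w := w_H + w_K - w_{H\cap K}$. Then the total sum is automatically $\#H + \#K - \#(H\cap K)$, and the work reduces to verifying $\sum_{y\in V(G)} q^{d(x,y)}\, w(y) = 1$ for every $x\in V(G)$. I would split into cases according to whether $x$ lies in $V(H)\cap V(K)$, $V(H)\setminus V(K)$, or $V(K)\setminus V(H)$. The first case is essentially immediate: convexity of $H\cap K$ in $G$ gives $d(x,y) = d_H(x,y)$ for $y\in V(H)$, $d(x,y) = d_K(x,y)$ for $y\in V(K)$, and $d(x,y) = d_{H\cap K}(x,y)$ for $y\in V(H\cap K)$, so the three weighting relations at $x$ combine to $1+1-1=1$.

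For the remaining cases I would invoke the projection hypothesis. Say $H$ projects onto $H\cap K$ via $\pi\colon V(H)\to V(H\cap K)$. Two distance identities must be established: that $d(x,y)=d_H(x,y)$ for $x,y\in V(H)$ (and symmetrically $d(x,y)=d_K(x,y)$ for $x,y\in V(K)$, which follows from convexity of $H\cap K$ in both $H$ and $K$), and that
\[d(x,y)\;=\;d_H(x,\pi(x)) + d_K(\pi(x),y)\qquad\text{for }x\in V(H),\,y\in V(K).\]
The second identity uses that $G = H\cup K$ forces any $G$-geodesic out of $V(H)$ into $V(K)\setminus V(H)$ to cross $V(H\cap K)$, and then the defining property of $\pi$ (together with convexity) picks out $\pi(x)$ as an optimal crossing vertex. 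With these in hand, the case $x\in V(H)\setminus V(K)$ is handled directly: factor $q^{d_H(x,\pi(x))}$ out of the $w_K$ and $w_{H\cap K}$ contributions, apply the weighting relations at $\pi(x)$, and those two cancel, leaving the $w_H$ piece equal to $1$.

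The case $x\in V(K)\setminus V(H)$ is subtler, since only $H$ is assumed to project. The key idea here is a \emph{pushforward} argument: the function
\[W_H(t)\;:=\;\sum_{y\in V(H):\,\pi(y)=t} q^{d_H(y,t)}\, w_H(y),\qquad t\in V(H\cap K),\]
is itself a weighting on $H\cap K$ and therefore equals $w_{H\cap K}$ by uniqueness. This drops out of the weighting relation for $w_H$ at a point $t\in V(H\cap K)$ after telescoping $d_H(t,y)$ through $\pi(y)$ and using convexity to replace $d_H$ with $d_{H\cap K}$ on $V(H\cap K)$. Using $W_H = w_{H\cap K}$, the $w_H$ contribution to $\sum_y q^{d(x,y)}\, w(y)$ reassembles into the $w_{H\cap K}$ contribution and cancels, leaving $\sum_{y\in V(K)} q^{d_K(x,y)}\, w_K(y) = 1$. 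I expect this pushforward identity to be the main obstacle: once it is in place, the remaining verifications are routine case-checking guided by the two distance identities above.
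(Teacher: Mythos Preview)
The paper does not prove this lemma: it is quoted as background from Leinster \cite{leinster2019magnitude} (Theorem~4.9 there) and stated without proof. So there is no ``paper's own proof'' to compare against.

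That said, your outline is precisely Leinster's original argument: construct the candidate weighting $w = w_H + w_K - w_{H\cap K}$ on $G$, verify the weighting equation vertex by vertex, and handle the asymmetric case $x\in V(K)\setminus V(H)$ via the pushforward identity $\pi_* w_H = w_{H\cap K}$. The three-case split and the distance identities you isolate are exactly the ones Leinster uses, and your identification of the pushforward step as the crux is correct. One small caution: the projection map $\pi$ is, strictly, only defined on those $x\in V(H)$ with $d_H(x,h)<\infty$ for all $h\in V(H\cap K)$; vertices of $H$ disconnected from $H\cap K$ need a separate (easy) remark that they contribute nothing to the cross terms. Otherwise the plan is complete.
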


\begin{thm}[Theorem 29 
    \cite{hepworth2015categorifying}] If $H$ and $K$ are subgraphs of $G$ such that $G=H\cup K$, $H\cap K$ is convex in $G$, and either $H$ or $K$ projects onto $H\cap K$, then magnitude homology satisfies a short exact sequence 
        \[0\to MH_{k,\ell}(H\cap K)\to MH_{k,\ell}(H)\oplus MH_{k,\ell}(K)\to MH_{k,\ell}(G)\to 0.\]
\end{thm}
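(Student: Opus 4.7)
The plan is to derive the short exact sequence from a chain-level short exact sequence and then show that the associated long exact sequence in homology collapses to the asserted short exact sequence. I would construct the candidate chain-level sequence
\[
0 \to MC_{*,\ell}(H\cap K) \xrightarrow{\alpha} MC_{*,\ell}(H) \oplus MC_{*,\ell}(K) \xrightarrow{\beta} MC_{*,\ell}(G) \to 0,
\]
with $\alpha(p) = (p,-p)$ and $\beta(u,w) = u+w$. Convexity of $H\cap K$ in $G$ is what ensures that lengths are preserved under the inclusions $H\cap K \hookrightarrow H,K,G$, so $\alpha$ and $\beta$ are well-defined chain maps (commutation with $\partial_{k,\ell}$ is immediate from the formula on generators). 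Injectivity of $\alpha$ is automatic from freeness, and exactness at the middle follows from the identification $MC_{*,\ell}(H) \cap MC_{*,\ell}(K) = MC_{*,\ell}(H\cap K)$, since a $G$-path appearing in both sides must have all its vertices in $V(H) \cap V(K)$.

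The principal obstacle is surjectivity of $\beta$, which reduces to the geometric lemma that \emph{every length-$\ell$ path in $G$ lies entirely in $V(H)$ or entirely in $V(K)$}. This is where the projection hypothesis is essential. Assuming without loss of generality that $K$ projects onto $H\cap K$ via a map $\pi$, I would argue by contradiction: if a path $(v_0,\dots,v_k)$ contained both a vertex $v_i \in V(H)\setminus V(K)$ and a vertex $v_j \in V(K)\setminus V(H)$, then applying the relation $d(x,h) = d(x,\pi(x)) + d(\pi(x),h)$ to the segment transitioning between the two sides would yield a strictly shorter path with the same endpoints, contradicting the fact that the consecutive distances along the original path already sum to the prescribed length $\ell$.

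Once the chain-level sequence is established, the snake lemma produces a long exact sequence in magnitude homology preserving the grading by $\ell$, and it remains to show that the connecting homomorphism $\delta: MH_{k,\ell}(G)\to MH_{k-1,\ell}(H\cap K)$ vanishes. Given a cycle $z = u+w$ with $u \in MC(H)$ and $w \in MC(K)$, the identity $\partial u = -\partial w$ places $\partial u$ in $MC(H)\cap MC(K) = MC(H\cap K)$, and $\delta[z] = [\partial u]$. I would use $\pi$ to construct a chain-level retraction $r: MC(K) \to MC(H\cap K)$ respecting the length grading; applying $r$ to the identity $\partial w = -\partial u$ gives $\partial r(w) = -\partial u$, exhibiting $\partial u$ as a boundary in $MC(H\cap K)$ and so forcing $\delta[z] = 0$. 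The most delicate point, and the main technical hurdle beyond the surjectivity lemma, is verifying that the projection-induced retraction is compatible with the length filtration --- that is, that ``pushing'' a $K$-path into $H\cap K$ via $\pi$ does not alter the sum of its consecutive distances --- a property that again traces back to the factorization relation $d(x,h)=d(x,\pi(x))+d(\pi(x),h)$.
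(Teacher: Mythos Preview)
The paper does not supply a proof of this statement: it is quoted verbatim as Theorem~29 of Hepworth--Willerton \cite{hepworth2015categorifying} in the background section, with no accompanying argument. There is therefore no in-paper proof to compare your proposal against.

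For what it is worth, your outline tracks the architecture of Hepworth--Willerton's original argument: a chain-level short exact sequence, surjectivity of $\beta$ from the lemma that every magnitude path in $G$ lies wholly in $H$ or wholly in $K$, and then vanishing of the connecting map via a projection-induced retraction. Two points in your sketch would need tightening if you were to write this out in full. First, the contradiction you describe for surjectivity is not really about the total length summing to $\ell$; the actual obstruction is local---one shows that a vertex of $V(H)\setminus V(K)$ and a vertex of $V(K)\setminus V(H)$ cannot both occur by examining a single consecutive step that crosses between the two sides and invoking the factorisation $d(x,h)=d(x,\pi(x))+d(\pi(x),h)$ there. Second, the map $r:MC_{*,\ell}(K)\to MC_{*,\ell}(H\cap K)$ induced by $\pi$ is not length-preserving on the nose (applying $\pi$ can strictly shorten a path), so it is not literally a chain-level retraction in fixed grading $\ell$; Hepworth--Willerton handle this by showing the retraction exists at the level of homology, which is enough to force $\delta=0$.
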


Asao and Izumihara \cite{asao2020geometric} were the first to observe that magnitude homology for graphs has a natural direct sum decomposition. Given any two arbitrary vertices $a,b\in V(G)$, define $MC_{k,\ell}(a,b)$ to be the free abelian group generated by all the $k$-paths of length $\ell$ in $G$ which begin with $a$ and end with $b$. Formally
\[MC_{k,\ell}(a,b):=\Z\langle v=(a,v_1,...,v_{k-1},b):\text{len}(v)=\ell\rangle.\]
Note that the boundary operator will naturally send elements in $MC_{k,\ell}(a,b)$ to an element in $MC_{k-1,\ell}(a,b)$ so that these turn out to be subcomplexes of $MC_{k,\ell}(G)$. Furthermore 
\[MC_{k,\ell}(G)=\bigoplus_{a,b\in V(G)}MC_{k,\ell}(a,b)\]
which also yields the following decomposition:
\[MH_{k,\ell}(G)=\bigoplus_{a,b\in V(G)}MH_{k,\ell}(a,b).\]

To contrast that of ordianry magnitude homology, Eulerian magnitude homology is defined in the same way with the exception that no vertices are allowed to repeat in a $k$-path of $G$. Note that the boundary map for magnitude homology will send Eulerian paths to Eulerian paths, thus yielding an Eulerian magnitude homology theory. 
\begin{defn}
    A $k$-path $(v_0,...,v_k)$ in a graph $G$ is an \textbf{Euleiran $k$-path} if $v_i\neq v_j$ for any $0\leq i,j\leq k$, $i\neq j$. Let $ET_k(G)$ be the set of all Eulerian $k$-paths in $G$. Then the \textbf{$(k,\ell)$-Eulerian magnitude chain group} of $G$ is 
        \[EMC_{k,\ell}(G):=\Z\langle v\in ET_k(G):\text{len}(v)=\ell\rangle,\]
    and the \textbf{$(k,\ell)$-Eulerian magnitude homology group} of $G$ is 
        \[EMH_{k,\ell}(G):=\ker\partial_{k,\ell}\big|_{EMC_{k,\ell}(G)}\Big/\im\partial_{k+1,\ell}\big|_{EMC_{k+1,\ell}(G)}.\]
    Furthermore, the \textbf{$(k,\ell)$-discriminant magnitude chain group} is $DMC_{k,\ell}(G):=MC_{k,\ell}(G)/EMC_{k,\ell}(G)$ and likewise the \textbf{$(k,\ell)$-discriminant magnitude homology group} is the relative homology group $DMH_{k,\ell}(G)$.
\end{defn}
Since discriminant magnitude homology is a relative homology theory, we have the following long exact sequence in magnitude homology 
    \[\cdots\to EMH_{k,\ell}(G)\to MH_{k,\ell}(G)\to DMH_{k,\ell}(G)\to EMH_{k-1,\ell}(G)\to\cdots\]
This sequence will be the main tool for computing the discriminant magnitude homology of star trees $S_n$ and the complete graphs $K_n$. 

Eulerian magnitude homology also enjoys a direct sum decomposition which is described in the same way as magnitude homology with the exception that $a\neq b$. Formally
\[EMC_{k,\ell}(a,b):=\Z\langle v=(a,v_1,...,v_{k-1},b)\in ET_k(G):a\neq b,\,\text{len}(v)=\ell\rangle,\]
and furthermore we get the direct sum decompositions:
\[EMC_{k,\ell}(G)=\bigoplus_{\substack{a,b\in V(G)\\a\neq b}}EMC_{k,\ell}(a,b),\]
\[EMH_{k,\ell}(G)=\bigoplus_{\substack{a,b\in V(G)\\ a\neq b}}EMH_{k,\ell}(a,b).\]

Now we turn our attention to the combinatorics which will be involved in our conversation about magnitude homology. In the course of this paper we will always let $[n]=\{1,2,3,...,n\}$ and $[n]_0=\{0,1,2,...,n\}$. Given a set $S$ we denote the set of all subsets of $S$ of cardinality $k$ as $\binom{S}{k}$. We also follow Sagan's notation for the falling factorial \cite{sagan2020combinatorics}, which represents the number of permutations of $[n]$ of length $k$:
\[n\downarrow_k\,:=\binom{n}{k}k!=\frac{n!}{(n-k)!}.\]
A poset $(P,\leq)$ is a set $P$ with a relation $\leq$ on the elements of $P$ which is reflexive, transitive, and antisymmetric. For two elements $a,b\in P$ we say $a$ covers $b$ if $a\leq c$ and $c\leq b$ implies that $a=c$ and we write $a\lessdot b$. For two elements $a,b\in P$ we write $a<b$ if $a\leq b$ and $a\neq b$. A chain of length $n$ in a poset is a sequence of elements $(a_0<a_1<\cdots<a_n)$ in $P$. A chain $(a_0<\cdots<a_n)$ is contained within a chain $(b_0<\cdots<b_m)$ if $m>n$ and there exists a subsequence $(b_{i_0}<\cdots<b_{i_n})$ such that $b_{i_j}=a_j$ for $0\leq j\leq n$. A chain is maximal or saturated if there exists no other chain which contains it. A poset $P$ is ranked with $\text{rank}(P)=n$ if every maximal chain is of the same length $n$. A poset $P$ has a minimal element denoted by $\hat0$ if for all $p\in P$, $\hat0\leq p$. Likewise $P$ has a maximal element denoted by $\hat1$ if for all $p\in P$, $\hat1\geq p$. Sometimes it is convenient to add a minimal and maximal element to a poset which may not have either. Thus we define the poset $\widehat{P}$ to be the poset $P$ with the added minimal and maximal elements. We can also assign a topological space to a poset.

\begin{defn}
    The \textbf{order complex} $\Delta(P)$ of a poset $P$ is the chain complex where the $n$th chain group is generated by the chains of length $n$ in $P$:
    \[C_n(\Delta(P)):=\Z\langle(a_0<\cdots<a_n):a_i\in P,\,i\in[n]_0\rangle.\]
\end{defn}

Although simplices in the order complex $\Delta(P)$ are generally considered to be unordered sets, it makes no difference for us to keep the order of the elements in a chain of length $n$ in $P$. Thus we keep the above notation when referring to generators of the chain groups of $\Delta(P)$. 


\section{Torsion in Eulerian magnitude homology}\label{tor}

 Kaneta-Yoshinaga \cite{Kaneta_Yoshinaga} start with a 
face poset  $\mathcal{F}(M)$. Nexrtm they consider  $\widehat{\mathcal{F}}(M)$ the poset $\mathcal{F}(M)$ with a $\hat0$ and a $\hat1$ adjoined as the unique minimal and maximal elements. Then $\mathcal{G}(M)$ denotes the graph of the Hasse diagram of $\widehat{\mathcal{F}}(M)$. Although the face poset $\mathcal{F}(M)$ and the graph $\mathcal{G}(K)$ are uniquely determined by the triangulation of $M$, minimal triangulation is preferred and ensures that  general, $\mathcal{F}(M)$ and $\mathcal{G}(M)$ are uniquely determined in the context of this paper.  This construction is essential in analyzing magnitude homology because of the following embedding: 

\begin{prop}[Corollary 5.12 \cite{Kaneta_Yoshinaga}]
(1) Let $P$ be a ranked poset with $\text{rank}(\widehat{P})=\ell$. then there exists an embedding of abelian groups $H_{k-2}(\Delta(P))\hookrightarrow H_{k,\ell}(\mathcal{G}(\widehat{P}))$, where $\mathcal{G}(\widehat{P})$ is the graph representing the Hasse diagram of $\widehat{P}$.\\
     (2) Let $M$ be a compact smooth manifold. Then for $\ell=\text{rank}(\widehat{\mathcal{F}}(M))$, $H_{k-2}(M)\cong MH_{k,\ell}(\hat0,\hat1)$ where $\hat0$ and $\hat1$ are the appropriate corresponding vertices in $\mathcal{G}(M)$ of the minimal and maximal elements in $\widehat{\mathcal{F}}(M)$.

\end{prop}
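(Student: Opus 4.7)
The plan is to construct an explicit isomorphism of chain complexes between $MC_{*,\ell}(\hat0,\hat1)$ and $C_{*-2}(\Delta(P))$, pass to homology, and then invoke the Asao--Izumihara decomposition $MH_{k,\ell}(G)=\bigoplus_{a,b}MH_{k,\ell}(a,b)$ to obtain the embedding in (1). Part (2) will follow by taking $P=\mathcal{F}(M)$ and identifying the order complex $\Delta(\mathcal{F}(M))$ with the barycentric subdivision of the triangulation, which is homotopy equivalent to $M$.

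The key input is a metric analysis of the Hasse-diagram graph $\mathcal{G}(\widehat P)$. Since each edge connects elements differing by one in rank, any walk changes rank by $\pm 1$ at each step, so for comparable $a\leq b$ one has $d(a,b)=\text{rk}(b)-\text{rk}(a)$, and in particular $d(\hat0,\hat1)=\ell$. From this I would deduce a rigidity statement: every $k$-path $(v_0,\ldots,v_k)$ from $\hat0$ to $\hat1$ of total length exactly $\ell$ satisfies $\hat0=v_0<v_1<\cdots<v_k=\hat1$ in $\widehat P$. Equality in the telescoping triangle inequality $\sum d(v_i,v_{i+1})=\ell$ forces every intermediate vertex to lie on a common shortest path, which by the rank observation must be monotone in rank, and the distinctness requirement $v_i\neq v_{i+1}$ makes the chain strict.

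With this rigidity in hand, I would define $\phi\colon C_{k-2}(\Delta(P))\to MC_{k,\ell}(\hat0,\hat1)$ by $\phi(a_0<\cdots<a_{k-2})=(\hat0,a_0,\ldots,a_{k-2},\hat1)$, which the previous step upgrades to a bijection of free generating sets. The next step is to verify that $\phi$ intertwines the boundary maps up to a uniform sign: deleting an intermediate vertex from such a chain-path preserves the total length because $d(a_{i-1},a_{i+1})=d(a_{i-1},a_i)+d(a_i,a_{i+1})$ for elements sitting on a chain, so no deletion term in $\partial_{k,\ell}$ is suppressed by the length condition. The only discrepancy is a shift in the summation index between the magnitude boundary (summed over $i=1,\ldots,k-1$) and the simplicial boundary (summed over $i=0,\ldots,k-2$), producing a global factor of $-1$ that is absorbed harmlessly. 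Passing to homology yields $H_{k-2}(\Delta(P))\cong MH_{k,\ell}(\hat0,\hat1)$, which embeds into $MH_{k,\ell}(\mathcal{G}(\widehat P))$ as a direct summand by Asao--Izumihara, proving (1); for (2), the same isomorphism applied with $P=\mathcal{F}(M)$, combined with the homotopy equivalence $\Delta(\mathcal{F}(M))\simeq M$, produces the stated isomorphism.

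The principal obstacle, as I see it, is the rigidity step in the metric analysis: while the parity argument cleanly yields $d(a,b)=\text{rk}(b)-\text{rk}(a)$ for comparable elements, one still needs to rule out the possibility that a $k$-path of length $\ell$ sidesteps through an incomparable element without exceeding its length budget. The triangle-inequality equality analysis does the job, but this is the place where most of the care is required; the sign bookkeeping for $\phi$ and the topological identification in (2) are essentially routine.
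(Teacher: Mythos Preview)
Your proposal is correct. The paper itself does not supply a proof of this proposition; it is quoted verbatim as Corollary~5.12 of Kaneta--Yoshinaga and used as background. Your direct chain-level argument---identifying generators of $MC_{k,\ell}(\hat0,\hat1)$ with $(k-2)$-chains of $\Delta(P)$ via the rank-monotonicity rigidity and then checking the boundaries match up to a global sign---is essentially the original Kaneta--Yoshinaga proof, and the rigidity step you flag as delicate does go through: the chain of inequalities $\ell=\sum d(v_i,v_{i+1})\geq\sum|\text{rk}(v_{i+1})-\text{rk}(v_i)|\geq|\text{rk}(\hat1)-\text{rk}(\hat0)|=\ell$ forces all rank increments to be strictly positive, and a geodesic in $\mathcal{G}(\widehat P)$ whose length equals the rank difference must be a saturated chain, so $v_i<v_{i+1}$ in $\widehat P$.

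Where the paper does something of its own is later, in Section~\ref{Asa0Izumihara}: it reproves the isomorphism $MH_{*+2,\text{rk}(\widehat P)}(\hat0,\hat1)\cong H_*(\Delta(P))$ (Theorem~\ref{posetTor}) by a different route, through the Asao--Izumihara simplicial model. There one observes that for the pair $(\hat0,\hat1)$ at length $\ell=\text{rk}(\widehat P)$ the subcomplex $K'_\ell(\hat0,\hat1)$ is empty (no shorter paths exist), so the relative complex $K_\ell(\hat0,\hat1)$ is literally $\Delta(P)$, and the Asao--Izumihara isomorphism $H_k(K_\ell,K'_\ell)\cong MH_{k+2,\ell}(\hat0,\hat1)$ gives the result immediately. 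That approach packages your rigidity analysis into the single statement ``$K'_\ell=\varnothing$'' and has the bonus of treating the Eulerian case in the same breath; your approach is more elementary and self-contained, needing no auxiliary simplicial model.
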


This construction led to the discovery of a graph that exhibits torsion in its magnitude homology; see Figure \ref{KY_rp2}. Following the sequence of isomorphisms given by Kaneta and Yoshinaga, it becomes evident that the torsion in the corresponding graph is generated by Eulerian paths. This arises because chains in a poset cannot contain repeated elements. Under the Kaneta-Yoshinaga map, these chains are mapped to their corresponding path representations in the Hasse diagram, which must therefore be Eulerian. Initially, this seems to produce two distinct Kaneta-Yoshinaga maps: one into magnitude homology and another into Eulerian magnitude homology.

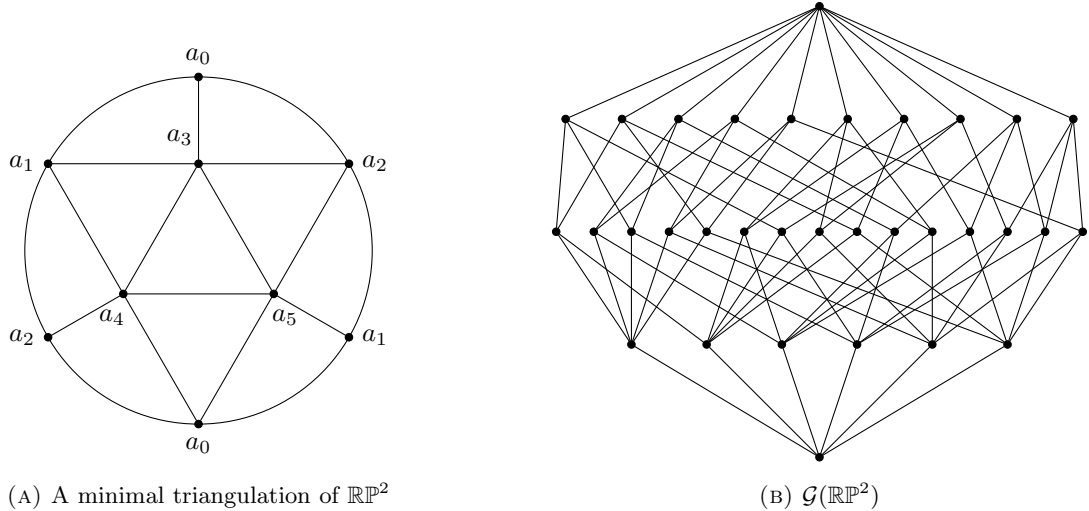
\begin{figure}[h]
    \centering
    \begin{subfigure}{.5\textwidth}
        \begin{center}
        \begin{tikzpicture}
        \node[circle, draw, fill=black, scale=0.3] (a) [label=west:$a_1$] at (-2,2) {};
        \node[circle, draw, fill=black, scale=0.3] (b) [label=east:$a_2$] at (2,2) {};
        \node[circle, draw, fill=black, scale=0.3] (c) [label=south:$a_0$] at (0,-1.464101) {};

        \draw[fill=none](0,0.845299) circle (2.309401);

        \node[circle, draw, fill=black, scale=0.3] (d) at (0,2) {};
        \node[label=$a_3$] at (-0.25,2) {}; 
        \node[circle, draw, fill=black, scale=0.3] (e)  at (1,0.267949) {};
        \node[label=$a_5$] at (1.15,-0.4) {};
        \node[circle, draw, fill=black, scale=0.3] (f)  at (-1,0.267949) {};
        \node[label=$a_4$] at (-1.15,-0.4) {};

        \node[circle, draw, fill=black, scale=0.3] (g) [label=north:$a_0$] at (0,3.1547005) {};
        \node[circle, draw, fill=black, scale=0.3] (h) [label=west:$a_2$] at (-2,-0.309) {};
        \node[circle, draw, fill=black, scale=0.3] (i) [label=east:$a_1$] at (2,-0.309) {};

        \draw (a) -- (b) -- (c) -- (a);
        \draw (d) -- (e) -- (f) -- (d);
        \draw (d) -- (g);
        \draw (e) -- (i);
        \draw (f) -- (h);
    \end{tikzpicture}
    \end{center}
        \caption{A minimal triangulation of $\mathbb{RP}^2$}
    \end{subfigure}%
    \begin{subfigure}{.5\textwidth}
        \begin{center}
        \begin{tikzpicture}[scale=0.5]
        \node[circle, draw, fill=black, scale=0.3] (0)  at (0,0) {};
        \node[circle, draw, fill=black, scale=0.3] (a_0)  at (-5,3) {};
        \node[circle, draw, fill=black, scale=0.3] (a_1)  at (-3,3) {};
        \node[circle, draw, fill=black, scale=0.3] (a_2)  at (-1,3) {};
        \node[circle, draw, fill=black, scale=0.3] (a_3)  at (1,3) {};
        \node[circle, draw, fill=black, scale=0.3] (a_4)  at (3,3) {};
        \node[circle, draw, fill=black, scale=0.3] (a_5)  at (5,3) {};

        \node[circle, draw, fill=black, scale=0.3] (01)  at (-7,6) {};
        \node[circle, draw, fill=black, scale=0.3] (02)  at (-6,6) {};
        \node[circle, draw, fill=black, scale=0.3] (03)  at (-5,6) {};
        \node[circle, draw, fill=black, scale=0.3] (04)  at (-4,6) {};
        \node[circle, draw, fill=black, scale=0.3] (05)  at (-3,6) {};
        \node[circle, draw, fill=black, scale=0.3] (12)  at (-2,6) {};
        \node[circle, draw, fill=black, scale=0.3] (13)  at (-1,6) {};
        \node[circle, draw, fill=black, scale=0.3] (14)  at (0,6) {};
        \node[circle, draw, fill=black, scale=0.3] (15)  at (1,6) {};
        \node[circle, draw, fill=black, scale=0.3] (23)  at (2,6) {};
        \node[circle, draw, fill=black, scale=0.3] (24)  at (3,6) {};
        \node[circle, draw, fill=black, scale=0.3] (25)  at (4,6) {};
        \node[circle, draw, fill=black, scale=0.3] (34)  at (5,6) {};
        \node[circle, draw, fill=black, scale=0.3] (35)  at (6,6) {};
        \node[circle, draw, fill=black, scale=0.3] (45)  at (7,6) {};

        \node[circle, draw, fill=black, scale=0.3] (013)  at (-6.75,9) {};
        \node[circle, draw, fill=black, scale=0.3] (015)  at (-5.25,9) {};
        \node[circle, draw, fill=black, scale=0.3] (023)  at (-3.75,9) {};
        \node[circle, draw, fill=black, scale=0.3] (024)  at (-2.25,9) {};
        \node[circle, draw, fill=black, scale=0.3] (045)  at (-.75,9) {};
        \node[circle, draw, fill=black, scale=0.3] (124)  at (.75,9) {};
        \node[circle, draw, fill=black, scale=0.3] (125)  at (2.25,9) {};
        \node[circle, draw, fill=black, scale=0.3] (134)  at (3.75,9) {};
        \node[circle, draw, fill=black, scale=0.3] (235)  at (5.25,9) {};
        \node[circle, draw, fill=black, scale=0.3] (345)  at (6.75,9) {};

        \node[circle, draw, fill=black, scale=0.3] (1)  at (0,12) {};

        \draw (0) -- (a_0);
        \draw (0) -- (a_1);
        \draw (0) -- (a_2);
        \draw (0) -- (a_3);
        \draw (0) -- (a_4);
        \draw (0) -- (a_5);

        \draw (a_0) -- (01) -- (a_1);
        \draw (a_0) -- (02) -- (a_2);
        \draw (a_0) -- (03) -- (a_3);
        \draw (a_0) -- (04) -- (a_4);
        \draw (a_0) -- (05) -- (a_5);
        \draw (a_1) -- (12) -- (a_2);
        \draw (a_1) -- (13) -- (a_3);
        \draw (a_1) -- (14) -- (a_4);
        \draw (a_1) -- (15) -- (a_5);
        \draw (a_2) -- (23) -- (a_3);
        \draw (a_2) -- (24) -- (a_4);
        \draw (a_2) -- (25) -- (a_5);
        \draw (a_3) -- (34) -- (a_4);
        \draw (a_3) -- (35) -- (a_5);
        \draw (a_4) -- (45) -- (a_5);

        \draw (01) -- (013);
        \draw (13) -- (013);
        \draw (03) -- (013);
        \draw (01) -- (015);
        \draw (15) -- (015);
        \draw (05) -- (015);
        \draw (02) -- (023);
        \draw (23) -- (023);
        \draw (03) -- (023);
        \draw (02) -- (024);
        \draw (24) -- (024);
        \draw (04) -- (024);
        \draw (04) -- (045);
        \draw (45) -- (045);
        \draw (05) -- (045);
        \draw (12) -- (124);
        \draw (24) -- (124);
        \draw (14) -- (124);
        \draw (12) -- (125);
        \draw (25) -- (125);
        \draw (15) -- (125);
        \draw (13) -- (134);
        \draw (34) -- (134);
        \draw (14) -- (134);
        \draw (23) -- (235);
        \draw (35) -- (235);
        \draw (25) -- (235);
        \draw (34) -- (345);
        \draw (45) -- (345);
        \draw (35) -- (345);

        \draw (013) -- (1);
        \draw (015) -- (1);
        \draw (023) -- (1);
        \draw (024) -- (1);
        \draw (045) -- (1);
        \draw (124) -- (1);
        \draw (125) -- (1);
        \draw (134) -- (1);
        \draw (235) -- (1);
        \draw (345) -- (1);
        
    \end{tikzpicture}
    \end{center}
    \caption{$\mathcal{G}(\mathbb{RP}^2)$}
    \end{subfigure}
    \caption{The Kenta-Yoshinaga construction for $\mathbb{RP}^2$}
    \label{KY_rp2}
\end{figure}

\begin{equation}\label{eq1}
     EMH_{k+2,\text{rk}(\widehat{P})}(\hat0,\hat1) \longleftarrow
    H_{k}(M) \longrightarrow
    MH_{k+2,\text{rk}(\widehat{P})}(\hat0,\hat1)
\end{equation}

However, this is not the case. In fact, for any such graph which represents the Hasse diagram of a ranked poset, the following chain groups are isomorphic. 
\begin{defn}
    Let $P$ be a poset. We define the graph representing the Hasse diagram of $P$ to be $\mathcal{G}(P)$. 
\end{defn}

Note that the elements of the set $P$ correspond to the vertices of $\mathcal{G}(P)$. For the remainder of this paper, the elements of $P$ shall be referred to as the vertices of $\mathcal{G}(P)$.

\begin{prop}
    If $P$ is a ranked poset, then for $\hat0,\hat1\in V(\mathcal{G}(\widehat{P}))$
\begin{equation}\label{EMCvsMC}
    EMC_{k,\text{rk}(\widehat{P})}(\hat0,\hat1)\cong MC_{k,\text{rk}(\widehat{P})}(\hat0,\hat1).
\end{equation}
\end{prop}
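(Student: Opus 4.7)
The plan is to show that every generator of $MC_{k,\text{rk}(\widehat{P})}(\hat0,\hat1)$ is automatically an Eulerian $k$-path; combined with the tautological containment $EMC_{k,\text{rk}(\widehat{P})}(\hat0,\hat1)\subseteq MC_{k,\text{rk}(\widehat{P})}(\hat0,\hat1)$, this forces equality of the two free abelian groups and in particular yields the isomorphism \eqref{EMCvsMC}. The guiding idea is a rigidity property of the Hasse diagram: a path in $\mathcal{G}(\widehat{P})$ from $\hat0$ to $\hat1$ of the minimum possible length must respect the rank function at every single step.

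Concretely, I would first fix a rank function $\rho$ on $\widehat{P}$ with $\rho(\hat0)=0$ and $\rho(\hat1)=\text{rk}(\widehat{P})$, which exists because $P$ is ranked, so that every cover relation in $\widehat{P}$ shifts rank by exactly one. Since edges in $\mathcal{G}(\widehat{P})$ correspond to cover relations, every edge shifts rank by $\pm 1$, and hence for any two vertices $u,v$,
\[d(u,v)\geq |\rho(u)-\rho(v)|.\]
Now take an arbitrary generator $(\hat0=v_0,v_1,\ldots,v_k=\hat1)$ of $MC_{k,\text{rk}(\widehat{P})}(\hat0,\hat1)$. Summing the rank-slope inequality over all $k$ segments and chaining with the triangle inequality on the telescoping sum of rank differences gives
\[\text{rk}(\widehat{P})=\sum_{i=0}^{k-1}d(v_i,v_{i+1})\geq \sum_{i=0}^{k-1}|\rho(v_{i+1})-\rho(v_i)|\geq \left|\sum_{i=0}^{k-1}(\rho(v_{i+1})-\rho(v_i))\right|=\text{rk}(\widehat{P}).\]

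Equality throughout forces two conclusions: first, $\rho(v_{i+1})\geq \rho(v_i)$ for every $i$; second, $d(v_i,v_{i+1})=\rho(v_{i+1})-\rho(v_i)$. Since the definition of a $k$-path requires $v_i\neq v_{i+1}$ we have $d(v_i,v_{i+1})\geq 1$, so the rank is \emph{strictly} increasing along the path. In particular, all $v_i$ are at distinct ranks, hence distinct as vertices, so the $k$-path is Eulerian. The only genuine technical care goes into verifying that the ranked hypothesis on $P$ supplies a compatible rank function on $\widehat{P}$; this is standard, and once it is in hand the remainder of the argument is a one-line triangle-inequality bookkeeping. A pleasant byproduct is that the proof makes transparent \emph{why} the Kaneta--Yoshinaga map lands automatically inside Eulerian magnitude homology.
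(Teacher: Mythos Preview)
Your proof is correct and follows the same underlying idea as the paper's: a $k$-path from $\hat0$ to $\hat1$ of total length $\text{rk}(\widehat{P})$ must traverse strictly increasing ranks, hence cannot repeat vertices. The paper compresses this into a single sentence (``every path in $\mathcal{G}(\widehat{P})$ of length $\text{rk}(\widehat{P})$ from $\hat0$ to $\hat1$ must be Eulerian''), while you make the rank-function and triangle-inequality bookkeeping explicit; the substance is the same.
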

\begin{proof}
    Since $P$ is ranked, every maximal chain is saturated, and thus every path in $\widehat{G}$ of length $\text{rk}(\widehat{P})$ from $\hat0$ to $\hat1$ must be Eulerian. Since every such path is Eulerian, the above magnitude chain groups must be isomorphic. 
\end{proof}

Hence, the Kaneta-Yoshinaga approach embeds the homology group of a ranked poset into a direct summand of both magnitude homology and Eulerian magnitude homology. As a consequence of the results by Sazdanovic and Summers \cite{sazdanovic2021torsion} we get the following theorem.

\begin{thm}\label{allEMHTorsion}
    Given a finitely generated abelian group
$A=\Z^r\oplus\Z_{p_1^{r_1}}\oplus\cdots\oplus\Z_{p_n^{r_n}} $
    there exists a graph $G$ and positive integers $k$ and $\ell$ such that 
    $A\trianglelefteq EMH_{k,\ell}(G).$
\end{thm}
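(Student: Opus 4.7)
The plan is to combine the chain-level identification \eqref{EMCvsMC} with the Sazdanovic-Summers embedding theorem for ordinary magnitude homology. Their construction is already of Kaneta-Yoshinaga type, producing graphs of the form $\mathcal{G}(\widehat{\mathcal{F}}(M))$ for a suitable triangulated space $M$. Since the face poset of a regular CW complex is ranked by cell dimension, the preceding proposition shows that the Eulerian and ordinary chain complexes coincide in the critical bidegree $\ell = \text{rk}(\widehat{\mathcal{F}}(M))$, so any torsion Sazdanovic-Summers locate inside the $(\hat 0,\hat 1)$-summand of $MH$ automatically lives inside $EMH$.

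First I would recall the Sazdanovic-Summers input. Given $A = \Z^r \oplus \Z_{p_1^{r_1}} \oplus \cdots \oplus \Z_{p_n^{r_n}}$, they realize each summand $\Z_{p_i^{r_i}}$ as torsion in the homology of a generalized lens space $L(p_i^{r_i},1)$ and assemble these together with spaces producing the free part into a single triangulated space $M$ whose homology contains $A$ as a subgroup. Applying Corollary 5.12(2) to $M$ then yields a graph $G := \mathcal{G}(\widehat{\mathcal{F}}(M))$, an integer $\ell := \text{rk}(\widehat{\mathcal{F}}(M))$, and a homological degree $k$ for which
\[A \hookrightarrow MH_{k,\ell}(\hat 0,\hat 1) \subseteq MH_{k,\ell}(G).\]

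Next I would invoke \eqref{EMCvsMC}. Because $\mathcal{F}(M)$ is ranked, every path of length $\ell$ from $\hat 0$ to $\hat 1$ in $G$ is saturated and hence Eulerian, so the inclusion $EMC_{\ast,\ell}(\hat 0,\hat 1) \hookrightarrow MC_{\ast,\ell}(\hat 0,\hat 1)$ is an isomorphism of chain complexes. Passing to homology gives $EMH_{k,\ell}(\hat 0,\hat 1) \cong MH_{k,\ell}(\hat 0,\hat 1)$, and combining with the direct-sum decomposition $EMH_{k,\ell}(G) = \bigoplus_{a \neq b} EMH_{k,\ell}(a,b)$ exhibits the Sazdanovic-Summers embedding as an embedding of $A$ into a direct summand of $EMH_{k,\ell}(G)$. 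This is exactly the conclusion of the theorem.

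The main obstacle is really cosmetic: one must verify that the graph output by the Sazdanovic-Summers construction is of the form $\mathcal{G}(\widehat{P})$ with $P$ ranked, so that \eqref{EMCvsMC} can be applied; this is automatic because their graphs arise as Hasse diagrams of face posets of triangulated manifolds, which are ranked by cell dimension. No new torsion computations are needed --- the argument is purely a transfer of the existing $MH$ result through the chain-level identification in the critical bidegree.
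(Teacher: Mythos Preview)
Your proposal is correct and follows essentially the same route as the paper: the paper states the theorem as an immediate consequence of the chain-level identification \eqref{EMCvsMC} (which forces the Kaneta--Yoshinaga embedding to land in both $MH$ and $EMH$) together with the Sazdanovic--Summers result, and your write-up simply makes this transfer explicit by naming the graph $G=\mathcal{G}(\widehat{\mathcal{F}}(M))$, the bidegree $\ell=\text{rk}(\widehat{\mathcal{F}}(M))$, and the direct-sum decomposition.
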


Therefore, all the known torsion which appears in magnitude homology also appears in Eulerian magnitude homology with the same equivalence class generating the torsion subgroups in both homology theories.

\section{Torsion via Asao-Izumihara construction}\label{Asa0Izumihara}

In 2020 Asao and Izumihara \cite{asao2020geometric} developed a geometric method of computing the magnitude homology of graphs which sheds a new light on the properties \cite{tajima2021magnitude} and was later modified to Eulerian Magnitude Homology \cite{giusti2024eulerian}. Since they were the first to present this construction, the complex is named after them. 

\begin{defn}
Given a graph $G$ and two vertices $a,b\in V(G)$, we can construct the simplicial complex
\[K_\ell(a,b):=\{\{(x_{i_1},i_1),...,(x_{i_k},i_k)\}\subseteq\{(x_1,1),...(x_m,m)\}\,|\,\text{len}(a,x_1,...,x_m,b)\leq\ell\}\]
and the subcomplex 
\[K'_\ell:=\{\{(x_{i_1},i_1),...,(x_{i_k},i_k)\}\in K_\ell(a,b)\,|\,\text{len}(a,x_{i_1},...,x_{i_k},b)\leq \ell-1\}.\]
The \textbf{Asao-Izumihara complex} on the graph $G$ for vertices $a,b\in V(G)$ is the quotient complex defined by the relative chain groups $C_n(K_\ell(a,b),K'_\ell(a,b))$.
\end{defn}

Informally, we are taking paths in $G$ from $a$ to $b$ of length less than or equal to $\ell$, then encoding the position of each landmark in the path in the second coordinate of each vertex in the Asao-Izumihara complex. Then to assure that we are only concerning ourselves with paths of length $\ell$ we quotient out by all paths of length less than $\ell$.

\begin{example}
    For the graph $G$ in Figure \ref{graph_G}, all the paths of length $4$ or less from vertex $1$ to vertex $5$ are as follows: $(1,2,1,5)$, $(1,3,1,5)$, $(1,3,2,5)$, $(1,3,4,5)$, $(1,4,3,5)$, $(1,5,3,5)$, $(1,2,3,5)$, $(1,2,5)$, $(1,3,5)$, $(1,4,5)$, and $(1,5)$. Thus we have 
    \[K_4(1,5)=\left\{
    \begin{matrix}
        \{(2,1),(1,2)\}, & \{(2,1),(3,2)\}, &\{(3,1),(1,2)\}, &\{(3,1),(2,2)\}, &\{(3,1),(4,2)\}, \\ 
        \{(4,1),(3,2)\}, &\{(5,1),(3,2)\}, &\{(2,1)\}, &\{(1,2)\},&\{(3,1)\}, \\ 
        \{(2,2)\}, &\{(4,2)\}, &\{(4,1)\}, &\{(3,2)\}, &\{(5,1)\}
    \end{matrix}
    \right\}\] 
    while $K'_4(1,5)=\{\{(2,1),(3,2)\},\{(2,1)\},\{(3,1)\},\{(3,2)\}\}$.
\end{example}

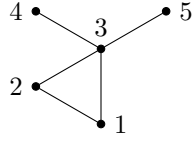
\begin{figure}
    \centering
    \begin{tikzpicture}
         \node[circle, draw, fill=black, scale=0.3] (1) [label=north:$3$] at (0,1) {};
         \node[circle, draw, fill=black, scale=0.3] (2) [label=west:$2$] at (-0.866,.5) {};
         \node[circle, draw, fill=black, scale=0.3] (3) [label=east:$1$] at (0,0) {};
         \node[circle, draw, fill=black, scale=0.3] (4) [label=west:$4$] at (-0.866,1.5) {};
         \node[circle, draw, fill=black, scale=0.3] (5) [label=east:$5$] at (0.866,1.5) {};
            
         \draw (1) -- (2) -- (3) -- (1);
         \draw (1) -- (4);
         \draw (1) -- (5);
     \end{tikzpicture}
    \caption{A graph $G$}
    \label{graph_G}
\end{figure}

Using the following result by Asao and Izumihara and the direct sum decomposition for magnitude homology, the Asao-Izumihara complex gives us a simplicial way to compute the magnitude homology of a graph.

\begin{prop}[Corollary 4.4 \cite{asao2020geometric}]\label{asao_izumi_complex}
Given a graph $G$ and vertices $a,b\in V(G)$, we have
\[H_k(K_\ell(a,b),K'_\ell(a,b))\cong MH_{k+2,\ell}(a,b).\]
\end{prop}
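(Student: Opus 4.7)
The plan is to establish a chain-level isomorphism between the relative simplicial chain complex $(C_\ast(K_\ell(a,b),K'_\ell(a,b)),\partial^{\text{simp}})$ and the magnitude chain complex $(MC_{\ast+2,\ell}(a,b),\partial)$, and then pass to homology. Since the target identity is an isomorphism (not merely an embedding), the cleanest route is a degreewise bijection of generators that intertwines the two differentials.

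First I would identify the generators in each degree. By definition, $K'_\ell(a,b)$ is the subcomplex of $K_\ell(a,b)$ consisting of simplices whose associated path has length at most $\ell-1$, so in the quotient a $k$-simplex $\{(x_{j_1},j_1),\ldots,(x_{j_{k+1}},j_{k+1})\}$ with $j_1<\cdots<j_{k+1}$ survives iff its associated tuple $(a,x_{j_1},\ldots,x_{j_{k+1}},b)$ has length exactly $\ell$. These surviving simplices are in natural bijection with the generators of $MC_{k+2,\ell}(a,b)$, the $(k+2)$-paths of length $\ell$ from $a$ to $b$. I would define $\Phi_k:C_k(K_\ell,K'_\ell)\to MC_{k+2,\ell}(a,b)$ on generators by sending each simplex to its associated path and extending $\Z$-linearly.

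Next I would check boundary compatibility. The simplicial boundary deletes each vertex of a $k$-simplex with alternating sign; any resulting face whose associated path has length strictly less than $\ell$ lies in $K'_\ell$ and is therefore zero in the quotient, while length-preserving faces survive. This is precisely the behavior of $\partial_{k+2,\ell}$, which deletes each interior vertex $v_i$ of a path with sign $(-1)^i$ and vanishes on terms with $\text{len}(v^{\setminus i})<\ell$. Once the sign conventions are lined up (an index-shift computation between the simplicial $(-1)^{s-1}$ on the $s$-th listed vertex and the magnitude $(-1)^i$ on the $i$-th interior vertex), $\Phi_\ast$ intertwines the differentials and is therefore a chain isomorphism. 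Passing to homology yields $H_k(K_\ell(a,b),K'_\ell(a,b))\cong MH_{k+2,\ell}(a,b)$, as claimed.

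The main obstacle I anticipate is the bookkeeping around the positional indices $j_i$ in the simplicial complex: one must verify that deleting $(x_{j_s},j_s)$ from a surviving simplex produces another surviving simplex exactly when the corresponding magnitude face is nonzero, and that the resulting reindexing does not introduce spurious sign discrepancies or collapse distinct generators. Once that correspondence is pinned down carefully, the chain isomorphism and hence the homology identification are essentially formal.
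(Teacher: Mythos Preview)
The paper does not supply its own proof of this proposition; it is quoted from Asao and Izumihara as their Corollary~4.4. That said, your proposed route has a genuine gap, not merely bookkeeping. Your central claim---that the simplices of $K_\ell(a,b)$ surviving in the quotient by $K'_\ell(a,b)$ are in bijection with the generators of $MC_{k+2,\ell}(a,b)$---is false. A surviving $k$-simplex is a set $\{(y_1,j_1),\ldots,(y_{k+1},j_{k+1})\}$, and the position indices $j_s$ are part of the data: two simplices with the same vertex list $y_1,\ldots,y_{k+1}$ but different positions are distinct simplices that map to the \emph{same} magnitude path $(a,y_1,\ldots,y_{k+1},b)$. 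The paper's own example already exhibits this. For the graph $G$ of Figure~\ref{graph_G} with $a=1$, $b=5$, $\ell=4$, both $\{(4,1)\}$ (from the path $(1,4,5)$) and $\{(4,2)\}$ (a face of $\{(3,1),(4,2)\}$ from the path $(1,3,4,5)$) lie in $K_4(1,5)\setminus K'_4(1,5)$, so $C_0(K_4,K'_4)$ has rank at least $2$; but $MC_{2,4}(1,5)=\Z\langle(1,4,5)\rangle$ has rank $1$. There is no degreewise chain isomorphism.

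What is true is that forgetting positions gives a \emph{quasi}-isomorphism, and establishing that is the real work. Asao and Izumihara do it by passing through an auxiliary CW complex whose cellular chain complex is literally $MC_{\ast+2,\ell}(a,b)$ and then exhibiting a homotopy equivalence between that space and $K_\ell(a,b)/K'_\ell(a,b)$; the extra simplices coming from inflated position indices are removed by a collapse argument, not by a one-to-one matching. So the obstacle you flag in your last paragraph is not cosmetic sign-chasing---it is the entire content of the proof, and your outline does not yet contain the mechanism that handles it.
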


Next, based on extension of Asao and Izumihara approach the Eulerian magnitude theory \cite{menara2024computing} we 
get a statement for Eulerian magnitude homology analogous to  Proposition \ref{asao_izumi_complex}.

\begin{defn}
Given a graph $G$ and two vertices $a,b\in V(G)$ such that $a\neq b$, we can construct the simplicial complex
\[ET_{\leq\ell}(a,b):=\{\{(x_{i_1}i_1),...,(x_{i_k},i_k)\}\in K_\ell(a,b)\,|\,(a,x_{i_1},...,x_{i_k},b)\in ET_{k+1,\ell'}(G),\, \ell'\leq\ell\}\]
where $ET_{k,\ell}(G)$ is the set of Eulerian $k$-paths of length $\ell$ in $G$. The \textbf{Eulerian Asao-Izumihara complex} is the quotient complex defined by the relaive chain groups $C_n(ET_{\leq\ell}(a,b),ET_{\leq\ell-1}(a,b))$.
\end{defn}

\begin{prop}[Corollary 20 \cite{menara2024torsion}]
Given a graph $G$ and vertices $a,b\in V(G)$ with $a\neq b$, we have
\[H_k(ET_{\leq\ell}(a,b),ET_{\leq\ell-1}(a,b))\cong EMH_{k+2,\ell}(a,b).\]
\end{prop}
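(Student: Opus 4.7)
The plan is to mirror the proof of Proposition \ref{asao_izumi_complex} of Asao and Izumihara \cite{asao2020geometric}, restricting throughout to Eulerian paths. The essential observation is that the constraint ``no repeated vertices'' is already built into the definition of $ET_{\leq\ell}(a,b)$, so the original geometric argument passes to the Eulerian subcomplex with only cosmetic changes.

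First I would identify the relative chain groups with the Eulerian magnitude chains. By definition, a generator of $C_n(ET_{\leq\ell}(a,b), ET_{\leq\ell-1}(a,b))$ is an $n$-simplex $\{(x_{i_1}, i_1), \ldots, (x_{i_{n+1}}, i_{n+1})\}$ lying in $ET_{\leq\ell}(a,b)$ but not in $ET_{\leq\ell-1}(a,b)$. Unwinding the definitions, such a simplex corresponds uniquely to an Eulerian path $(a, x_{i_1}, \ldots, x_{i_{n+1}}, b)$ of length exactly $\ell$, where the second coordinates record the position of each vertex along the path. This gives a natural bijection between generators of $C_n(ET_{\leq\ell}(a,b), ET_{\leq\ell-1}(a,b))$ and generators of $EMC_{n+2,\ell}(a,b)$, and thus an isomorphism of free abelian groups for each $n$.

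Second I would verify that this bijection intertwines the simplicial boundary on the quotient with the Eulerian magnitude boundary. The simplicial boundary of an $n$-simplex is the alternating sum of its codimension-one faces, and passing to the quotient kills any face whose associated path has length strictly less than $\ell$. Under the identification above, this exactly mirrors the rule $\partial^i_{k,\ell}(v) = v^{\setminus i}$ when $\text{len}(v^{\setminus i}) = \ell$ and zero otherwise. The signs match because both operators alternate in the ordered index $i$, and deleting a vertex from an Eulerian path yields another Eulerian path, so no face that ought to survive on the Eulerian side is lost in the restriction.

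The main obstacle is purely bookkeeping: one must confirm that restricting from the full Asao-Izumihara complex $K_\ell(a,b)$ to the Eulerian subcomplex $ET_{\leq\ell}(a,b)$ does not disrupt the chain-level identification, i.e., that every subset of vertices of an Eulerian simplex is again Eulerian (which is automatic, since deleting vertices from a repetition-free sequence cannot introduce repetitions) and that the quotient filtration by length is compatible with the Eulerian condition. Once these combinatorial checks are in place, the chain isomorphism descends to homology and produces the desired identification $H_k(ET_{\leq\ell}(a,b), ET_{\leq\ell-1}(a,b)) \cong EMH_{k+2,\ell}(a,b)$, matching Corollary 20 of \cite{menara2024torsion}.
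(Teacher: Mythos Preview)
The paper does not supply a proof; the statement is quoted from \cite{menara2024torsion}. Your plan to mirror the Asao--Izumihara argument restricted to Eulerian paths is the right strategy and is essentially what the cited reference does, but your execution contains a genuine gap: the map ``simplex $\mapsto$ associated path'' is \emph{not} a bijection on generators. A single Eulerian path $(a,x_1,\ldots,x_{n+1},b)$ of length $\ell$ can be represented by many distinct simplices $\{(x_1,i_1),\ldots,(x_{n+1},i_{n+1})\}$ in $ET_{\leq\ell}\setminus ET_{\leq\ell-1}$, one for each admissible choice of position labels $i_1<\cdots<i_{n+1}$. In the paper's own worked example (Figure~\ref{graph_G}, $a=1$, $b=5$, $\ell=4$) both $\{(4,1)\}$ and $\{(4,2)\}$ lie in the relative complex and correspond to the same Eulerian path $(1,4,5)$. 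Consequently the relative chain group is typically strictly larger than $EMC_{n+2,\ell}(a,b)$, and the chain-level isomorphism you assert does not exist.

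What is actually proved is a \emph{quasi}-isomorphism. The forget-the-indices map $C_\ast(ET_{\leq\ell},ET_{\leq\ell-1})\to EMC_{\ast+2,\ell}(a,b)$ is a well-defined surjective chain map, and the redundant simplices coming from shifted position labels are killed in homology (for instance $\{(4,2)\}$ is the relative boundary of $\{(3,1),(4,2)\}$ in the example above, since the face $\{(3,1)\}$ has associated path $(1,3,5)$ of length $2<\ell$). Making this precise requires either an explicit homotopy inverse---sending a path to the simplex with consecutive indices $1,\ldots,n{+}1$---together with a chain homotopy, or an acyclic-carriers argument, as in \cite{asao2020geometric}. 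Your observation that faces of Eulerian simplices remain Eulerian is precisely the ingredient that lets that argument restrict cleanly to the Eulerian subcomplex, so the overall plan is sound once this step is filled in.
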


The geometric approach of Asao and Izumihara to both magnitude homology theories serves as the foundation for an alternative proof of Theorem \ref{allEMHTorsion}, independent of the embedding provided by Kaneta and Yoshinaga. However, the Kaneta-Yoshinaga approach remains relevant, as it is used to identify a poset with torsion in its order complex. 
\begin{prop}
    Let $P$ be a ranked poset. Then for $\hat0,\hat1\in V(\mathcal{G}(\widehat{P}))$ 
    \[K'_{\text{rk}(\widehat{P})}(\hat0,\hat1)=ET_{\leq\text{rk}(\widehat{P})-1}(\hat0,\hat1)=\varnothing,\]
    and consequently
    \[H_\ast(K_{\text{rk}(\widehat{P})}(\hat0,\hat1))\cong H_\ast(ET_{\leq\text{rk}(\widehat{P})}(\hat0,\hat1))\cong H_\ast(\Delta(P)).\]
\end{prop}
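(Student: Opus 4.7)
The plan is to prove the emptiness claim first and then deduce the homology identifications. Throughout, let $\ell := \text{rk}(\widehat P)$. The key underlying observation is that $d_{\mathcal{G}(\widehat P)}(\hat 0, \hat 1) = \ell$, which immediately implies that no subpath from $\hat 0$ to $\hat 1$ can be shorter than $\ell$; this gives the emptiness. The homology isomorphism then follows by exhibiting a simplicial identification $K_\ell(\hat 0, \hat 1) \cong \Delta(P) \cong ET_{\leq \ell}(\hat 0, \hat 1)$.

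To prove the distance equality, note that each edge of the Hasse diagram $\mathcal{G}(\widehat P)$ is a covering relation between elements whose ranks differ by one, so any walk in $\mathcal{G}(\widehat P)$ changes the rank function by $\pm 1$ at each step. Since $\hat 0$ has rank $0$ and $\hat 1$ has rank $\ell$, any walk between them must have length at least $\ell$, with equality achieved by realizing a maximal chain as a walk. The triangle inequality then gives that every $k$-path $(\hat 0, y_1, \ldots, y_k, \hat 1)$ in the magnitude-homology sense has total length $\sum_i d(y_i, y_{i+1}) \geq d(\hat 0, \hat 1) = \ell$. Hence no subpath from $\hat 0$ to $\hat 1$ can have length at most $\ell - 1$, and by the definitions of $K'_\ell$ and $ET_{\leq \ell - 1}$, both must be empty.

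For the homology identifications, the vanishing of $K'_\ell$ and $ET_{\leq \ell - 1}$ reduces the relative chain complexes to the absolute ones. A simplex of $K_\ell(\hat 0, \hat 1)$ arises from an $m$-path $(\hat 0, x_1, \ldots, x_m, \hat 1)$ of length at most $\ell$, which by the previous step must have length exactly $\ell$. Applying the equality case of the triangle inequality and using that in a ranked poset the Hasse-graph distance between comparable elements equals the rank difference, each consecutive pair satisfies $d(x_i, x_{i+1}) = \rho(x_{i+1}) - \rho(x_i)$, which forces $\hat 0 < x_1 < \cdots < x_m < \hat 1$ to be an ascending chain in $\widehat P$; conversely, every chain in $P$ extends to a maximal chain furnishing such a path. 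This produces a natural bijection between simplices of $K_\ell(\hat 0, \hat 1)$ and chains in $P$, which respects face maps and yields a simplicial identification with $\Delta(P)$. Because chains have pairwise distinct entries, every contributing path is automatically Eulerian, so $ET_{\leq \ell}(\hat 0, \hat 1) = K_\ell(\hat 0, \hat 1)$ and the same identification applies.

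The most delicate point will be confirming the simplicial identification in detail: the ``position'' labels on vertices of the Asao--Izumihara complex must be shown to be uniquely determined by the underlying poset element once one restricts to length-$\ell$ paths. The equality case of the triangle inequality should force the position of each intermediate vertex $x_i$ along any such path to coincide with its rank $\rho(x_i)$ in $\widehat P$, making the identification with $\Delta(P)$ unambiguous.
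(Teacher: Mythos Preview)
Your approach is essentially the paper's: use that $\widehat P$ is ranked to rule out any path from $\hat 0$ to $\hat 1$ of length below $\ell=\mathrm{rk}(\widehat P)$, observe that length-$\ell$ paths cannot repeat vertices, and then identify the Asao--Izumihara complexes with $\Delta(P)$. The paper compresses all of this into two sentences; your rank-parity argument for $d(\hat 0,\hat 1)=\ell$ and your use of the equality case of the triangle inequality to force $x_1<\cdots<x_m$ are the details the paper leaves implicit.

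One caution on your final paragraph. You assert that for every length-$\ell$ path the position index of an intermediate vertex must equal its rank in $\widehat P$. That is only true for \emph{saturated} chains. For instance, in the chain poset $\hat 0<a<b<\hat 1$ (so $\ell=3$), the path $(\hat 0,b,\hat 1)$ has length $3$ and places $b$ in position $1$, while $\rho(b)=2$; under the paper's stated definition this produces two distinct vertices $(b,1)$ and $(b,2)$ in $K_\ell(\hat 0,\hat 1)$. So ``position $=$ rank'' is not the correct mechanism for collapsing the labels. You are right that the point is delicate---the paper's proof does not address it at all and simply asserts $K_\ell(\hat 0,\hat 1)=\Delta(P)$---but the resolution lies in the original Asao--Izumihara convention rather than in a rank argument.
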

\begin{proof}
    Since $P$ is ranked, $\widehat{P}$ is also ranked. Thus, there can be no $k$-paths from $\hat0$ to $\hat1$ of length less than $\text{rk}(\widehat{P})$. Also, there can be no repeated vertices in any $k$-path of length $\text{rk}(\widehat{P})$. Thus, $K_{\text{rk}(\widehat{P})}(\hat0,\hat1)$ and $ET_{\leq\text{rk}(\widehat{P})}(\hat0,\hat1)$ are both just the simplicial complexes where the $n$-simplices are just the $n$-chains in $P$. In other words
    \[K_{\text{rk}(\widehat{P})}(\hat0,\hat1)=ET_{\leq\text{rk}(\widehat{P})}(\hat0,\hat1)=\Delta(P),\]
    which yields the isomorphism of homology groups. 
\end{proof}

\begin{thm}\label{posetTor}
    For a ranked poset $P$ and $\hat0,\hat1\in V(\mathcal{G}(\widehat{P}))$ we have
    \[MH_{\ast+2,\text{rk}(\widehat{P})}(\hat0,\hat1)\cong H_\ast(\Delta(P))\cong EMH_{\ast+2,\text{rk}(\widehat{P})}(\hat0,\hat1).\]
\end{thm}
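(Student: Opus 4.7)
The plan is to assemble the theorem from three ingredients already in place: the Asao--Izumihara isomorphism for magnitude homology, the Eulerian analogue of that isomorphism, and the preceding proposition that identifies both of the relevant simplicial complexes with the order complex $\Delta(P)$ and shows that the subcomplexes to be quotiented out are empty. Since every piece is quoted in the excerpt, the proof will be a short chain of isomorphisms rather than any new construction.

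First, I would invoke the preceding proposition to record that for a ranked poset $P$ one has $K'_{\text{rk}(\widehat{P})}(\hat0,\hat1)=ET_{\leq \text{rk}(\widehat{P})-1}(\hat0,\hat1)=\varnothing$, and that the full complexes $K_{\text{rk}(\widehat{P})}(\hat0,\hat1)$ and $ET_{\leq \text{rk}(\widehat{P})}(\hat0,\hat1)$ coincide with $\Delta(P)$. Because the subcomplex in each pair is empty, the relative chain groups $C_n(K_\ell,K'_\ell)$ and $C_n(ET_{\leq\ell},ET_{\leq\ell-1})$ reduce to the absolute chain groups of $\Delta(P)$, so the relative homology simply equals $H_\ast(\Delta(P))$ in both cases.

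Next I would apply Proposition \ref{asao_izumi_complex} with $\ell=\text{rk}(\widehat{P})$ to obtain
\[MH_{k+2,\text{rk}(\widehat{P})}(\hat0,\hat1)\cong H_k\bigl(K_{\text{rk}(\widehat{P})}(\hat0,\hat1),K'_{\text{rk}(\widehat{P})}(\hat0,\hat1)\bigr)\cong H_k(\Delta(P)),\]
and Corollary 20 of \cite{menara2024torsion} to obtain the parallel statement
\[EMH_{k+2,\text{rk}(\widehat{P})}(\hat0,\hat1)\cong H_k\bigl(ET_{\leq\text{rk}(\widehat{P})}(\hat0,\hat1),ET_{\leq\text{rk}(\widehat{P})-1}(\hat0,\hat1)\bigr)\cong H_k(\Delta(P)).\]
Stringing these isomorphisms together yields the claimed three-way isomorphism for every $k$, which is precisely the statement of the theorem.

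There is really no serious obstacle here: the Asao--Izumihara machinery has done all the heavy lifting, and the ranked hypothesis on $P$ is used only to eliminate the subcomplexes appearing in the quotient, so that relative homology collapses to absolute homology of $\Delta(P)$. If anything needs to be said with care, it is only to remark that the identification of $K_{\text{rk}(\widehat{P})}(\hat0,\hat1)$ and $ET_{\leq\text{rk}(\widehat{P})}(\hat0,\hat1)$ with $\Delta(P)$ is compatible, under the obvious bijection of simplices with chains in $P$, with the standard simplicial boundary operator used in both the Asao--Izumihara and the order-complex setting, so that the isomorphism of chain complexes (and hence of homology) is canonical. With this remark in place the proof is complete.
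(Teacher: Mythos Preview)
Your proposal is correct and matches the paper's approach exactly: the theorem is stated there as an immediate consequence of the preceding proposition (which identifies both Asao--Izumihara complexes with $\Delta(P)$ and shows the subcomplexes are empty) together with Proposition~\ref{asao_izumi_complex} and its Eulerian analogue. The paper gives no separate proof beyond this, so your chain of isomorphisms is precisely what is intended.
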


To summarize, the existence of torsion in the order complex $\Delta(P)$, guarantees existence of  torsion in both the magnitude and Eulerian magnitude homology of the graph $G$ representing the Hasse diagram of $\widehat{P}$. 
Therefore, both  Kaneta-Yoshinaga embedding and the Asao-Izumihara isomorphism imply the existence of torsion Theorem \ref{allEMHTorsion}.


\section{Torsion without manifolds}\label{noTriang}

The Kaneta-Yoshinaga method, and in turn Asao-Izumihara,  relies on a triangulation of a manifold with torsion in its homology  in order to construct a graph whose magnitude homology theories contain torsion. 
This raises the question: Can a graph with torsion be constructed such that it is not the Hasse diagram of the face poset of a triangulable manifold? Here, we aim to answer this in the affirmative. We construct a family of posets that contain 2-torsion in their order homology and are not face posets of a triangulable manifold (with particular emphasis on ‘triangulable manifold’). However, demonstrating that these posets exhibit the desired 2-torsion requires additional insights. The first step in this process is to analyze the image of chains in the posets from Definition \ref{pksigma} under the bijection from Proposition \ref{bijection}. 

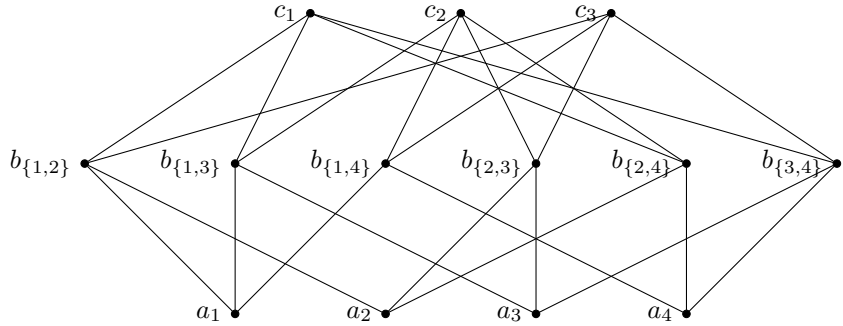
\begin{figure}[h]
    \begin{center}
        \begin{tikzpicture}
            \node[circle, draw, fill=black, scale=0.3] (a1) [label=west:$a_1$] at (-3,2) {};
            \node[circle, draw, fill=black, scale=0.3] (a2) [label=west:$a_2$] at (-1,2) {};
            \node[circle, draw, fill=black, scale=0.3] (a3) [label=west:$a_3$] at (1,2) {};
            \node[circle, draw, fill=black, scale=0.3] (a4) [label=west:$a_4$] at (3,2) {};

            \node[circle, draw, fill=black, scale=0.3] (b12) [label=west:$b_{\{1,2\}}$] at (-5,4) {};
            \node[circle, draw, fill=black, scale=0.3] (b13) [label=west:$b_{\{1,3\}}$] at (-3,4) {};
            \node[circle, draw, fill=black, scale=0.3] (b14) [label=west:$b_{\{1,4\}}$] at (-1,4) {};
            \node[circle, draw, fill=black, scale=0.3] (b23) [label=west:$b_{\{2,3\}}$] at (1,4) {};
            \node[circle, draw, fill=black, scale=0.3] (b24) [label=west:$b_{\{2,4\}}$] at (3,4) {};
            \node[circle, draw, fill=black, scale=0.3] (b34) [label=west:$b_{\{3,4\}}$] at (5,4) {};

            \node[circle, draw, fill=black, scale=0.3] (c1) [label=west:$c_1$] at (-2,6) {};
            \node[circle, draw, fill=black, scale=0.3] (c2) [label=west:$c_2$] at (0,6) {};
            \node[circle, draw, fill=black, scale=0.3] (c3) [label=west:$c_3$] at (2,6) {};
            
            \draw (a1) -- (b12) -- (a2);
            \draw (a1) -- (b13) -- (a3);
            \draw (a1) -- (b14) -- (a4);
            \draw (a2) -- (b23) -- (a3);
            \draw (a2) -- (b24) -- (a4);
            \draw (a3) -- (b34) -- (a4);
            \draw (b12) -- (c1) -- (b34);
            \draw (b13) -- (c2) -- (b24);
            \draw (b14) -- (c3) -- (b23);
            \draw (b13) -- (c1) -- (b24);
            \draw (b14) -- (c2) -- (b23);
            \draw (b12) -- (c3) -- (b34);
        \end{tikzpicture}
    \end{center}
    \caption{The poset $P_4^\sigma$}\label{f:p4}
\end{figure}

\begin{defn}\label{pksigma}
    For a positive even number $k\in2\Z_{>2}$ and a derangement $\sigma\in S_{k-1}$, let $P_k^\sigma$ be the ranked poset obtained by the following construction:
    \begin{enumerate}
        \item Let $a_i\in P$ for $i\in[k]$ such that $a_i$ covers no other element of $P$.  
        \item For $\{i,j\}\in\binom{[k]}{2}$, let $b_{\{i,j\}}$ be the element that covers $a_i$ and $a_j$.
        \item Partition the set $\binom{[k]}{2}$ into $k-1$ blocks $B_1,...,B_{k-1}$ such that every element $j\in[k]$ appears in exactly one element of each block $B_m$, $1\leq m\leq k-1$.
        \item For $1\leq m\leq k-1$, let $c_m$ cover $b_{\{i,j\}}$ if $\{i,j\}\in B_m\cup B_{\sigma(m)}$.
    \end{enumerate}
\end{defn}

By definition, every maximal chain in $P_k^\sigma$ is saturated and the rank of $P_k^\sigma$ is 2. 

\begin{example}\label{p_4^(123)}
   The Hasse diagram for $P_4^\sigma$ where $\sigma$ is the derangement $(1\;2\;3)\in S_3$ with blocks $B_1=\{\{1,2\},\{3,4\}\}$, $B_2=\{\{1,3\},\{2,4\}\}$, $B_3=\{\{1,4\},\{2,3\}\}$ is illustrated in Figure \ref{f:p4}.
\end{example}

\begin{example}\label{p_6^(21345)}
    The structure of the Hasse diagram for $P_6^\tau$ where $\tau=(2\;1\;3\;4\;5)\in S_5$, and the blocks are $B_1=\{\{1,2\},\{3,5\},\{4,6\}\}$, $B_2=\{\{1,3\},\{2,4\},\{5,6\}\}$, $B_3=\{\{1,4\},\{2,5\},\{3,6\}\}$, $B_4=\{\{1,5\},\{2,6\},\{3,4\}\}$, and $B_5=\{\{1,6\},\{2,3\},\{4,5\}\}$ is given  in Figure \ref{f:p6}.
\end{example}

\begin{figure}[h]
    \begin{center}
    \begin{tikzpicture}[scale=0.9]
        \node[circle, draw, fill=black, scale=0.3] (a1) [label=west:$a_1$] at (-5,2.5) {};
        \node[circle, draw, fill=black, scale=0.3] (a2) [label=west:$a_2$] at (-3,2.5) {};
        \node[circle, draw, fill=black, scale=0.3] (a3) [label=west:$a_3$] at (-1,2.5) {};
        \node[circle, draw, fill=black, scale=0.3] (a4) [label=west:$a_4$] at (1,2.5) {};
        \node[circle, draw, fill=black, scale=0.3] (a5) [label=west:$a_5$] at (3,2.5) {};
        \node[circle, draw, fill=black, scale=0.3] (a6) [label=west:$a_6$] at (5,2.5) {};

        \node[circle, draw, fill=black, scale=0.3] (b12) at (-7,5) {};
        \node[circle, draw, fill=black, scale=0.3] (b13) at (-6,5) {};
        \node[circle, draw, fill=black, scale=0.3] (b14) at (-5,5) {};
        \node[circle, draw, fill=black, scale=0.3] (b15) at (-4,5) {};
        \node[circle, draw, fill=black, scale=0.3] (b16) at (-3,5) {};
        \node[circle, draw, fill=black, scale=0.3] (b23) at (-2,5) {};
        \node[circle, draw, fill=black, scale=0.3] (b24) at (-1,5) {};
        \node[circle, draw, fill=black, scale=0.3] (b25) at (0,5) {};
        \node[circle, draw, fill=black, scale=0.3] (b26) at (1,5) {};
        \node[circle, draw, fill=black, scale=0.3] (b34) at (2,5) {};
        \node[circle, draw, fill=black, scale=0.3] (b35) at (3,5) {};
        \node[circle, draw, fill=black, scale=0.3] (b36) at (4,5) {};
        \node[circle, draw, fill=black, scale=0.3] (b45) at (5,5) {};
        \node[circle, draw, fill=black, scale=0.3] (b46) at (6,5) {};
        \node[circle, draw, fill=black, scale=0.3] (b56) at (7,5) {};

        \node[circle, draw, fill=black, scale=0.3] (c1) [label=west:$c_1$] at (-4,7.5) {};
        \node[circle, draw, fill=black, scale=0.3] (c2) [label=west:$c_2$] at (-2,7.5) {};
        \node[circle, draw, fill=black, scale=0.3] (c3) [label=west:$c_3$] at (0,7.5) {};
        \node[circle, draw, fill=black, scale=0.3] (c4) [label=west:$c_4$] at (2,7.5) {};
        \node[circle, draw, fill=black, scale=0.3] (c5) [label=west:$c_5$] at (4,7.5) {};

        \draw (a1) -- (b12);
        \draw (a1) -- (b13);
        \draw (a1) -- (b14);
        \draw (a1) -- (b15);
        \draw (a1) -- (b16);
        \draw (a2) -- (b12);
        \draw (a2) -- (b23);
        \draw (a2) -- (b24);
        \draw (a2) -- (b25);
        \draw (a2) -- (b26);
        \draw (a3) -- (b13);
        \draw (a3) -- (b23);
        \draw (a3) -- (b34);
        \draw (a3) -- (b35);
        \draw (a3) -- (b36);
        \draw (a4) -- (b14);
        \draw (a4) -- (b24);
        \draw (a4) -- (b34);
        \draw (a4) -- (b45);
        \draw (a4) -- (b46);
        \draw (a5) -- (b15);
        \draw (a5) -- (b25);
        \draw (a5) -- (b35);
        \draw (a5) -- (b45);
        \draw (a5) -- (b56);
        \draw (a6) -- (b16);
        \draw (a6) -- (b26);
        \draw (a6) -- (b36);
        \draw (a6) -- (b46);
        \draw (a6) -- (b56);

        \draw (b12) -- (c1);
        \draw (b35) -- (c1);
        \draw (b46) -- (c1);
        \draw (b14) -- (c1);
        \draw (b25) -- (c1);
        \draw (b36) -- (c1);
        \draw (b13) -- (c2);
        \draw (b24) -- (c2);
        \draw (b56) -- (c2);
        \draw (b12) -- (c2);
        \draw (b35) -- (c2);
        \draw (b46) -- (c2);
        \draw (b14) -- (c3);
        \draw (b25) -- (c3);
        \draw (b36) -- (c3);
        \draw (b15) -- (c3);
        \draw (b26) -- (c3);
        \draw (b34) -- (c3);
        \draw (b15) -- (c4);
        \draw (b26) -- (c4);
        \draw (b34) -- (c4);
        \draw (b16) -- (c4);
        \draw (b23) -- (c4);
        \draw (b45) -- (c4);
        \draw (b16) -- (c5);
        \draw (b23) -- (c5);
        \draw (b45) -- (c5);
        \draw (b13) -- (c5);
        \draw (b24) -- (c5);
        \draw (b56) -- (c5);

    \end{tikzpicture}
    \end{center}
    \caption{The poset $P_6^\tau$}\label{f:p6}
\end{figure}

\begin{prop}
    For $k\in\Z_{>2}$, $P_k^\sigma$ is not the face poset of the triangulation of a manifold.
\end{prop}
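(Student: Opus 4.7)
The plan is to run a direct combinatorial cover-count argument rather than invoke any topological invariants. Suppose for contradiction that $P_k^\sigma\cong\mathcal{F}(K)$ for some triangulation $K$ of a manifold $M$. First I would pin down the dimension of $K$: every maximal chain in $P_k^\sigma$ has the form $a_i\lessdot b_{\{i,j\}}\lessdot c_m$ and therefore has length $2$, so $P_k^\sigma$ has rank $2$. Since the face poset of a $d$-dimensional simplicial complex has rank $d$, this forces $\dim K=2$, and under the assumed isomorphism the $a_i$'s must correspond to vertices of $K$, the $b_{\{i,j\}}$'s to edges, and the $c_m$'s to $2$-simplices.

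The crux is to count how many rank-$1$ elements each maximal element $c_m$ covers in $P_k^\sigma$. Because $\sigma\in S_{k-1}$ is a derangement, $\sigma(m)\neq m$, so $B_m$ and $B_{\sigma(m)}$ are two distinct blocks of the partition of $\binom{[k]}{2}$ and are therefore disjoint. By construction each block $B_i$ is a perfect matching on $[k]$, so $|B_i|=k/2$, and hence
\[
|B_m\cup B_{\sigma(m)}|=|B_m|+|B_{\sigma(m)}|=k.
\]
Thus every $c_m$ covers exactly $k$ elements of rank $1$ in $P_k^\sigma$.

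Finally I would derive the contradiction from the local structure of a simplicial $2$-complex: in $\mathcal{F}(K)$ each $2$-simplex covers exactly its three $1$-dimensional faces, so every top element covers exactly $3$ rank-$1$ elements. But $k\in 2\Z_{>2}$ gives $k\geq 4>3$, ruling out $P_k^\sigma\cong\mathcal{F}(K)$ for any $2$-dimensional simplicial complex; and since $P_k^\sigma$ has rank $2$, it cannot be the face poset of a triangulation of any higher-dimensional manifold either. I do not anticipate any real obstacle in carrying this out; the entire argument reduces to a one-line counting computation, and the only point that requires care is to invoke the derangement hypothesis on $\sigma$ to guarantee that $B_m$ and $B_{\sigma(m)}$ are distinct (and hence disjoint) blocks of the partition.
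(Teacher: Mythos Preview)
Your argument is correct and is essentially the paper's own proof: both observe that $\mathrm{rk}(P_k^\sigma)=2$ forces a $2$-dimensional triangulation, and then derive a contradiction from the fact that each maximal element $c_m$ covers more than three rank-$1$ elements. You supply more detail than the paper (the explicit count $|B_m\cup B_{\sigma(m)}|=k$ and the use of the derangement hypothesis to ensure $B_m\neq B_{\sigma(m)}$), but the strategy is identical.
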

\begin{proof}
    If it were, then $P_k^\sigma$ would have to be the triangulation of a 2-manifold since it is a ranked poset of rank 2. Yet every element which would represent a facet, by construction, will cover more than three other elements, which yields a contradiction. Therefore, $P_k^\sigma$ cannot be the face poset of a triangulable manifold.
\end{proof}

\begin{defn}
    Given $k\in2\Z_{>2}$, a $k$-gon is assigned an \textbf{alternating coloring} if each side of the $k$-gon alternates in color. If $K$ is a $k$-gon which is assigned an alternating coloring, then it is said that $K$ is \textbf{alternately colored}.
\end{defn}

\begin{prop}
    Given $n\in\Z_{>3}$, let $AC_k(n)$ be the set of $k$-gons such that there are $n$ colors, each $k$-gon is alternately colored, and each color appears in exactly two $k$-gons. Then 
    \begin{equation}
        |AC_k(n)|=n.
    \label{eq1}
    \end{equation}
\end{prop}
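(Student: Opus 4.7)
The plan is to prove the equality by a straightforward double counting argument applied to incidences between colors and $k$-gons. The key observation is that because each $k$-gon in the collection is alternately colored and $k$ is even (this is the standing hypothesis from Definition \ref{pksigma}, since $k\in 2\Z_{>2}$), an alternating coloring of its $k$ sides uses exactly two colors — one on the odd-indexed sides and one on the even-indexed sides. So every $k$-gon in $AC_k(n)$ contributes a well-defined pair of colors.

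First, I would form the set of incidences
\[
\mathcal{I} := \{(c, K)\,:\, K \in AC_k(n),\ c \text{ is a color appearing on some side of } K\}
\]
and count $|\mathcal{I}|$ in two ways. Counting by $k$-gon, each $K \in AC_k(n)$ contributes exactly two colors by the alternating property, so $|\mathcal{I}| = 2\,|AC_k(n)|$. Counting by color, the hypothesis that each of the $n$ colors appears in exactly two $k$-gons of the collection gives $|\mathcal{I}| = 2n$. Equating the two expressions yields $2\,|AC_k(n)| = 2n$, hence the desired identity \eqref{eq1}.

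I do not expect any serious obstacle here: the statement reduces entirely to bookkeeping once the structural fact ``$k$ even, alternating $\Rightarrow$ exactly two colors per $k$-gon'' is made explicit. The only subtle point worth flagging in the write-up is that one must verify the two colors appearing in a given $K\in AC_k(n)$ are genuinely distinct (so that each $k$-gon really contributes two, rather than one, incidences). This follows directly from the definition of alternating coloring, which forces adjacent sides to receive different colors. With this minor remark in place, the double counting gives $|AC_k(n)|=n$ as claimed.
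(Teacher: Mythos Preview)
Your double-counting argument is correct and genuinely different from the paper's approach. The paper proceeds by induction on $n$: it verifies the base case $|AC_k(3)|=3$ by hand, and then for the inductive step it takes a $k$-gon $K\in AC_k(n)$ colored with $\ob{C_1},\ob{C_2}$, ``splits'' it into two $k$-gons $K_1,K_2$ (one carrying $\ob{C_1}$, the other $\ob{C_2}$), and fills the remaining sides of both with a fresh color $\ob{C_3}$, thereby obtaining a valid member of $AC_k(n+1)$ with exactly one more $k$-gon.

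Your argument is more elementary and more transparent: you simply observe that each alternately colored $k$-gon (with $k$ even) uses exactly two distinct colors, while each of the $n$ colors sits in exactly two $k$-gons, so the incidence count gives $2\,|AC_k(n)|=2n$ directly. This avoids the inductive machinery entirely and makes the equality $|AC_k(n)|=n$ an immediate consequence of the defining hypotheses. The paper's proof, by contrast, is more constructive in flavor---it exhibits how the collections $AC_k(n)$ can be built up one color at a time---which is perhaps useful for intuition about the bijection $\varphi$ in Proposition~\ref{bijection}, but is not needed for the bare cardinality statement. Your remark that the two colors on a given $k$-gon are forced to be distinct (since adjacent sides differ) is the only point that needs to be said, and you say it.
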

\begin{proof}
    First we shall prove that $AC_k(n)$ has the initial condition $|AC_k(3)|=3.$ 
    For $n=3$, choose two colors $\ob{A}$ and $\ob{B}$ for the first $k$-gon. Next place $\ob{A}$ in one $k$-gon and $\ob{B}$ in another $k$-gon. Now choose another color $\ob{C}$ to place in each $k$-gon. Thus there are three $k$-gons: one colored with $\ob{A}$ and $\ob{B}$, one colored with $\ob{A}$ and $\ob{C}$, and one colored with $\ob{B}$ and $\ob{C}$.
    Now for the set $AC_k(n)$, take any one of the $k$-gons $K\in AC_k(n)$ with colors $\ob{C_1}$ and $\ob{C_2}$ and ``split" it into two new $k$-gons, $K_1$ and $K_2$, such that $K_1$ is alternately colored with $\ob{C_1}$ and $K_2$ is alternately colored with $\ob{C_2}$. Then, color the remaining sides of $K_1$ and $K_2$ with a new color $\ob{C_3}$. Since the colors $\ob{C_1}$ and $\ob{C_2}$ already appear in another $k$-gon, and by construction $\ob{C_3}$ appears in the two $k$-gons $K_1$ and $K_2$, we have
    \[AC_k(n+1)=AC_k(n)\setminus\{K\}\uplus\{K_1,K_2\}.\]
    Therefore, 
    \begin{eqnarray*}
        |AC_k(n+1)| &=& |AC_k(n)\setminus\{K\}|+|\{K_1,K_2\}| \\
                  &=& |AC_k(n)|-1+2 \\
                  &=& |AC_k(n)|+1,
    \end{eqnarray*} 
    and thus $|AC_k(n)|=n$.
\end{proof}

Recall that for a subset $S$ of a poset $P$, the lower order ideal $I(S)$ of $S$ is the subposet of all elements $p\in P$ such that $p\leq s$ for all $s\in S$. For any singleton subset $\{s\}\subseteq P$ we write $I(s)$ instead of $I(\{s\})$.

\begin{defn}
    Let $\I_k^\sigma$ be set of all lower order ideals of the form $I(c_m)$ for $c_m\in P_k^\sigma$.
\end{defn}

\begin{prop}\label{bijection}
    There exists a bijection $\varphi:\mathcal{I}_k^\sigma\to AC_k(k-1)$.
\end{prop}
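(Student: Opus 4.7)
The plan is to build $\varphi$ by reading off, for each top element $c_m$, the $k$-gon obtained from the union of the two perfect matchings $B_m$ and $B_{\sigma(m)}$ attached to $c_m$. First I would unpack what $I(c_m)$ looks like: the minimal elements $a_1,\dots,a_k$ are all below every $b_{\{i,j\}}$, so the nontrivial content of $I(c_m)$ sits in the middle level, and it consists precisely of those $b_{\{i,j\}}$'s with $\{i,j\}\in B_m\cup B_{\sigma(m)}$. Thus $I(c_m)$ is entirely determined by the unordered pair of blocks $\{B_m,B_{\sigma(m)}\}$, which is in turn an unordered pair of perfect matchings on $[k]$.

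Next, since each $B_i$ is a perfect matching, $B_m\cup B_{\sigma(m)}$ is a 2-regular simple graph on the vertex set $[k]$, hence a disjoint union of even cycles whose edges alternate between the two matchings. Under the standing hypotheses of Definition \ref{pksigma} (as realized in the examples $P_4^\sigma$ and $P_6^\tau$), this union is a single $k$-cycle on $[k]$, which I read as a $k$-gon whose sides inherit an alternating 2-coloring: color $\ob m$ for the $B_m$-edges and color $\ob{\sigma(m)}$ for the $B_{\sigma(m)}$-edges. Define $\varphi(I(c_m))$ to be this alternately colored $k$-gon.

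To see the image lands in $AC_k(k-1)$ I would check the three defining properties of that set. As $m$ ranges over $[k-1]$, the palette is exactly $\{\ob 1,\dots,\ob{k-1}\}$, which has $k-1$ colors; each $\varphi(I(c_m))$ is alternately 2-colored by construction; and a given color $\ob m$ appears only in those $\varphi(I(c_{m'}))$ where $m\in\{m',\sigma(m')\}$, namely in $\varphi(I(c_m))$ and $\varphi(I(c_{\sigma^{-1}(m)}))$. These two $k$-gons are distinct because $\sigma$ is a derangement, so each color appears in exactly two $k$-gons. Using Proposition \ref{eq1} together with $|\mathcal{I}_k^\sigma|=k-1$, both sides have the same finite cardinality, so the last step is injectivity: if $\varphi(I(c_m))=\varphi(I(c_{m'}))$ then the unordered color pairs $\{\ob m,\ob{\sigma(m)}\}$ and $\{\ob{m'},\ob{\sigma(m')}\}$ coincide, forcing either $m=m'$ or $\sigma^2(m)=m$, and the latter is ruled out by the choice of $\sigma$.

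The main obstacle is the topological claim at Step 2 that $B_m\cup B_{\sigma(m)}$ is a single Hamiltonian $k$-cycle rather than a disjoint union of shorter alternating cycles; this is what guarantees the output is an honest $k$-gon and not a multi-polygon configuration outside $AC_k(k-1)$. I expect this to follow from the canonical cyclic 1-factorization of $K_k$ used in the construction of $P_k^\sigma$ together with the hypothesis that $\sigma$ is a long-cycle derangement, as in Examples \ref{p_4^(123)} and \ref{p_6^(21345)}, so the bulk of the proof amounts to making this combinatorial compatibility precise; once it is in hand, the verification that $\varphi$ matches the colors and cardinalities is automatic.
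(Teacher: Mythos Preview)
Your construction of $\varphi$ is exactly the paper's: assign each block $B_i$ the color $\ob{B_i}$ and send $I(c_m)$ to the $k$-gon alternately colored by $\ob{B_m}$ and $\ob{B_{\sigma(m)}}$. The paper's proof is three sentences---it defines the map and observes that color $\ob{B_m}$ appears in the two $k$-gons $\varphi(I(c_m))$ and $\varphi(I(c_{\sigma^{-1}(m)}))$---and does not separately argue injectivity or the Hamiltonicity question you raise.

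Your injectivity step, however, contains an error: ``$\sigma^2(m)=m$ is ruled out by the choice of $\sigma$'' is false, since a derangement may contain $2$-cycles (e.g.\ $\sigma=(1\,2)(3\,4\,5)\in S_5$ for $k=6$). Worse, if $(m\,m')$ is a $2$-cycle of $\sigma$ then $B_m\cup B_{\sigma(m)}=B_{m'}\cup B_{\sigma(m')}$, so $I(c_m)=I(c_{m'})$ as order ideals and $|\mathcal{I}_k^\sigma|<k-1$; no bijection with $AC_k(k-1)$ can then exist. The paper is tacitly treating $\mathcal{I}_k^\sigma$ as the indexed family $(I(c_m))_{m\in[k-1]}$ rather than a set, or else silently excluding such $\sigma$, and your argument does not repair this. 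Your Hamiltonicity concern is likewise legitimate and is not addressed by the paper: Definition~\ref{pksigma} permits an arbitrary $1$-factorization of $K_k$, and for $k\geq 8$ there are $1$-factorizations in which some $B_m\cup B_{\sigma(m)}$ splits into several shorter even cycles rather than a single $k$-gon. The extra hypotheses you propose (cyclic $1$-factorization, long-cycle $\sigma$) are sufficient but go beyond what the definition actually assumes.
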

\begin{proof}
    Assign each block $B_i$ in the construction of $P_k^\sigma$ a distinct color $\ob{B_i}$. Then $\varphi(I(c_m))$ is the $k$-gon with alternating colors $\ob{B_m}$ and $\ob{B_{\sigma(m)}}$. The map is well-defined since each color $\ob{B_m}$ will appear in exactly two $k$-gons: $\varphi(I(c_m))$ and $\varphi(I(c_{\sigma^{-1}(m)}))$. 
\end{proof}
    
Note that, because of the way $P_k^\sigma$ was constructed, the $k/2$ elements $b_{\{i,j\}}$ for $\{i,j\}\in B_m$ are covered by exactly two elements: $c_m$ and $c_{\sigma^{-1}(m)}$. This corresponds to the fact that each $\varphi(I(c_m))$ must contain two colors and that each color must also be in exactly one other $k$-gon. Hence, the elements $b_{\{i,j\}}\in I(c_m)$ correspond to the colored edges of the $k$-gons since there are precisely $k$ elements $b_{\{i,j\}}$ in each lower order ideal $I(c_m)$. Again, because of the way $P_k^\sigma$ was constructed, each $i\in[k]$ is in exactly two sets corresponding to the two blocks $B_m$ and $B_{\sigma(m)}$ so that there are exactly two other $j,f\in[k]$ which are ``adjacent" to $i$. What this means, is that the vertices of $\varphi(I(c_m))$ are the elements $a_i\in I(c_m)$ arranged in such a way that $a_i$ is adjacent to both $a_j$ and $a_f$ in $\varphi(I(c_m))$ if $b_{\{i,j\}},b_{\{i,f\}}\in I(c_m)$. Therefore, each $a_i$ represents the vertices, each $b_{\{i,j\}}$ represents the colored edges, and $c_m$ represents the face of the alternately colored $k$-gon $\varphi(I(c_m))$.

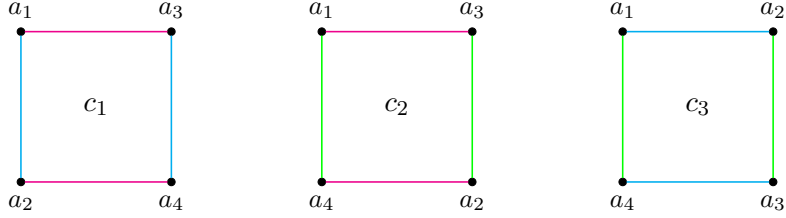
\begin{figure}[h]
    \begin{center}
        \begin{tikzpicture}
            \node[circle, draw, fill=black, scale=0.3] (0) [label=north:$a_1$] at (0,0) {};
            \node[circle, draw, fill=black, scale=0.3] (1) [label=south:$a_2$] at (0,-2) {};
            \node[circle, draw, fill=black, scale=0.3] (2) [label=south:$a_4$] at (2,-2) {};
            \node[circle, draw, fill=black, scale=0.3] (3) [label=north:$a_3$] at (2,0) {};

            \node[circle, draw, fill=black, scale=0.3] (4) [label=north:$a_1$] at (4,0) {};
            \node[circle, draw, fill=black, scale=0.3] (5) [label=south:$a_4$] at (4,-2) {};
            \node[circle, draw, fill=black, scale=0.3] (6) [label=south:$a_2$] at (6,-2) {};
            \node[circle, draw, fill=black, scale=0.3] (7) [label=north:$a_3$] at (6,0) {};

            \node[circle, draw, fill=black, scale=0.3] (8) [label=north:$a_1$] at (8,0) {};
            \node[circle, draw, fill=black, scale=0.3] (9) [label=south:$a_4$] at (8,-2) {};
            \node[circle, draw, fill=black, scale=0.3] (10) [label=south:$a_3$] at (10,-2) {};
            \node[circle, draw, fill=black, scale=0.3] (11) [label=north:$a_2$] at (10,0) {};

            \node at (1,-1) {\large $c_1$};
            \node at (5,-1) {\large $c_2$};
            \node at (9,-1) {\large $c_3$};

            \draw[cyan, semithick] (0) -- (1);
            \draw[magenta, semithick] (1) -- (2);
            \draw[cyan, semithick] (2) -- (3);
            \draw[magenta, semithick] (3) -- (0);

            \draw[green, semithick] (4) -- (5);
            \draw[magenta, semithick] (5) -- (6);
            \draw[green, semithick] (6) -- (7);
            \draw[magenta, semithick] (7) -- (4);

            \draw[green, semithick] (8) -- (9);
            \draw[cyan, semithick] (9) -- (10);
            \draw[green, semithick] (10) -- (11);
            \draw[cyan, semithick] (11) -- (8);
        \end{tikzpicture}
    \end{center}
    \caption{The bijection $\varphi:\mathcal{I}_4^\sigma\to AC_4(3)$}
    \label{f:I4}
\end{figure}

\begin{example} 
    For the poset $P_4^\sigma$ in Figure \ref{f:p4}, the bijection $\varphi:\mathcal{I}_4^\sigma\to AC_4(3)$ yields the alternately colored squares in Figure \ref{f:I4}.
\end{example}

\begin{example} The bijection $\varphi:\I_6^\tau\to AC_6(5)$ 
    for the poset $P_6^\tau$ in Figure \ref{f:p6} gives alternately colored hexagons shown in Figure \ref{f:I6}.
\end{example}

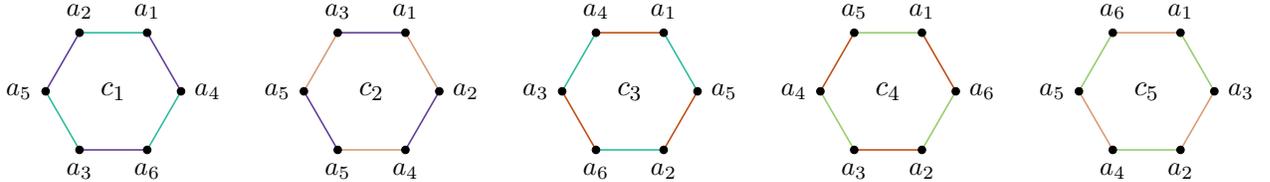
\begin{figure}
    \centering
    \begin{tabular}{ccccc}
    \begin{tikzpicture}[scale=0.45]
        \newdimen\R
        \R=2cm
        \draw[RoyalPurple, semithick] (0:\R) \foreach \x in {120,240,360} { (\x:\R) -- (\x+60:\R)};
        \draw[SeaGreen, semithick] (0:\R) \foreach \x in {60,180,300}{(\x:\R) -- (\x+60:\R)};
        \foreach \x/\l/\p in
            { 60/{$a_1$}/above,
            120/{$a_2$}/above,
            180/{$a_5$}/left,
            240/{$a_3$}/below,
            300/{$a_6$}/below,
            360/{$a_4$}/right
        }
        \node[inner sep=1pt,circle,draw,fill,label={\p:\l}] at (\x:\R) {};
        \node at (0,0) {\large $c_1$};
    \end{tikzpicture} &

    \begin{tikzpicture}[scale=0.45]
        \newdimen\R
        \R=2cm
        \draw[Tan, semithick] (0:\R) \foreach \x in {120,240,360} { (\x:\R) -- (\x+60:\R)};
        \draw[RoyalPurple, semithick] (0:\R) \foreach \x in {60,180,300}{(\x:\R) -- (\x+60:\R)};
        \foreach \x/\l/\p in
            { 60/{$a_1$}/above,
            120/{$a_3$}/above,
            180/{$a_5$}/left,
            240/{$a_5$}/below,
            300/{$a_4$}/below,
            360/{$a_2$}/right
        }
        \node[inner sep=1pt,circle,draw,fill,label={\p:\l}] at (\x:\R) {}; 
        \node at (0,0) {\large $c_2$};
        \end{tikzpicture} &

        \begin{tikzpicture}[scale=0.45]
        \newdimen\R
        \R=2cm
        \draw[SeaGreen, semithick] (0:\R) \foreach \x in {120,240,360} { (\x:\R) -- (\x+60:\R)};
        \draw[Bittersweet, semithick] (0:\R) \foreach \x in {60,180,300}{(\x:\R) -- (\x+60:\R)};
        \foreach \x/\l/\p in
            { 60/{$a_1$}/above,
            120/{$a_4$}/above,
            180/{$a_3$}/left,
            240/{$a_6$}/below,
            300/{$a_2$}/below,
            360/{$a_5$}/right
        }
        \node[inner sep=1pt,circle,draw,fill,label={\p:\l}] at (\x:\R) {};
        \node at (0,0) {\large $c_3$};
    \end{tikzpicture} 
    &
    \begin{tikzpicture}[scale=0.45]
        \newdimen\R
        \R=2cm
        \draw[Bittersweet, semithick] (0:\R) \foreach \x in {120,240,360} { (\x:\R) -- (\x+60:\R)};
        \draw[YellowGreen, semithick] (0:\R) \foreach \x in {60,180,300}{(\x:\R) -- (\x+60:\R)};
        \foreach \x/\l/\p in
            { 60/{$a_1$}/above,
            120/{$a_5$}/above,
            180/{$a_4$}/left,
            240/{$a_3$}/below,
            300/{$a_2$}/below,
            360/{$a_6$}/right
        }
        \node[inner sep=1pt,circle,draw,fill,label={\p:\l}] at (\x:\R) {};
        \node at (0,0) {\large $c_4$};
    \end{tikzpicture} & 

    \begin{tikzpicture}[scale=0.45]
        \newdimen\R
        \R=2cm
        \draw[YellowGreen, semithick] (0:\R) \foreach \x in {120,240,360} { (\x:\R) -- (\x+60:\R)};
        \draw[Tan, semithick] (0:\R) \foreach \x in {60,180,300}{(\x:\R) -- (\x+60:\R)};
        \foreach \x/\l/\p in
            { 60/{$a_1$}/above,
            120/{$a_6$}/above,
            180/{$a_5$}/left,
            240/{$a_4$}/below,
            300/{$a_2$}/below,
            360/{$a_3$}/right
        }
        \node[inner sep=1pt,circle,draw,fill,label={\p:\l}] at (\x:\R) {};
        \node at (0,0) {\large $c_5$};
    \end{tikzpicture}
    \end{tabular}
    \caption{The bijection $\varphi:\I_6^\tau\to AC_6(5)$}
    \label{f:I6}
\end{figure}

Even though $A_k(k-1)$ is considered to be a set of alternately colored $k$-gons with unlabeled vertices, the element $\varphi(I(c_m))$ will always be considered to have labeled vertices and faces. From here on, $\varphi$ will always refer to the bijection in Proposition \ref{bijection}.

\begin{prop}
    Let $I(c_m)\in\I_k^\sigma$. Then each (maximal) 2-chain in $I(c_m)$ corresponds to a directed edge in $\varphi(I(c_m))$.
\end{prop}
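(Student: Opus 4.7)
The plan is to unpack both sides of the claimed correspondence and write down an explicit bijection. First, I would observe that, since $P_k^\sigma$ is ranked of rank $2$, any $2$-chain (i.e.\ a chain of length $2$, with three elements) in $I(c_m)$ is automatically maximal and has the form $(a_i < b_{\{i,j\}} < c_m)$, with $a_i$ a minimal element, $b_{\{i,j\}}$ a rank-$1$ element covered by $c_m$, and $i \in \{i,j\}$. Step (4) of Definition \ref{pksigma} tells us that $b_{\{i,j\}} \le c_m$ precisely when $\{i,j\} \in B_m \cup B_{\sigma(m)}$, while step (2) tells us that $b_{\{i,j\}}$ covers exactly the two minimal elements $a_i$ and $a_j$. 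Since $B_m$ and $B_{\sigma(m)}$ are partitions of $[k]$ into pairs, this gives $k$ choices for the pair $\{i,j\}$ and, once the pair is chosen, two choices for the endpoint, for a total of $2k$ maximal $2$-chains in $I(c_m)$.

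Next I would recall the geometric content of $\varphi(I(c_m))$ from the discussion immediately preceding the statement. Under $\varphi$, the $k$ minimal elements $a_i\in I(c_m)$ become the vertices of the polygon $\varphi(I(c_m))$; the $k$ rank-$1$ elements $b_{\{i,j\}}\in I(c_m)$ become its (colored) edges, with the edge associated to $b_{\{i,j\}}$ joining the vertices $a_i$ and $a_j$; and $c_m$ corresponds to the facet. A directed edge of $\varphi(I(c_m))$ is just an edge together with a choice of initial vertex, so there are exactly $2k$ of them as well.

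Finally, I would define the map sending the maximal $2$-chain $(a_i < b_{\{i,j\}} < c_m)$ to the directed edge $a_i \to a_j$ in $\varphi(I(c_m))$. It is well defined because $b_{\{i,j\}}$ determines the underlying edge and the rank-$0$ element $a_i$ determines the orientation. It is injective since the chain can be recovered from a directed edge $a_i\to a_j$ by reading off its initial vertex $a_i$ and its underlying edge $b_{\{i,j\}}$ (and then appending $c_m$, which is forced), and surjective since every directed edge of $\varphi(I(c_m))$ has the form $a_i\to a_j$ for some $b_{\{i,j\}}$ with $\{i,j\}\in B_m\cup B_{\sigma(m)}$. There is no real obstacle beyond keeping the two levels of indexing straight; the content of the proposition is essentially a bookkeeping identification baked into the definition of $\varphi$, with the facet $c_m$, its covering rank-$1$ elements, and the minimal elements each covering-pair contains providing exactly the face/edges/vertices of an oriented polygon.
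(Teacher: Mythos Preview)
Your proposal is correct and follows essentially the same approach as the paper: both send the $2$-chain $(a_i<b_{\{i,j\}}<c_m)$ to the edge $b_{\{i,j\}}$ directed away from $a_i$. Your version is more explicit---you count both sides, spell out well-definedness, injectivity, and surjectivity---whereas the paper's proof is a two-sentence sketch that simply declares the correspondence and notes that each $b_{\{i,j\}}$ appears once in the $k$-gon.
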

\begin{proof}
    Consider the chain $a_i<b_{\{i,j\}}<c_m$ in $I(c_m)$. Assume that this chain corresponds to the edge that is directed away from the vertex $a_i$ in the $k$-gon $\varphi(I(c_m))$. Then each chain corresponds to a unique directed edge in each $k$-gon $\varphi(I(c_m))$ since each edge $b_{\{i,j\}}$ only appears once in exactly two $k$-gons.
\end{proof}

Since the maximal chains in $\mathcal{I}_k^\sigma$ are maximal chains in $P_k^\sigma$, the following becomes an immediate consequence.

\begin{cor}
    Each maximal chain $a_i<b_{\{i,j\}}<c_m$ in $P_k^\sigma$ corresponds to a unique directed edge in $\varphi(I(c_m))$.
\end{cor}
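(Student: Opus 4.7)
The plan is to deduce this as a direct consequence of the preceding proposition by observing that every maximal chain in $P_k^\sigma$ ending at $c_m$ lies entirely within $I(c_m)$. First I would note that by construction of $P_k^\sigma$, the elements $c_m$ are the maximal elements of the poset, so any maximal chain in $P_k^\sigma$ has the form $a_i < b_{\{i,j\}} < c_m$ for some top element $c_m$. Since such a chain terminates at $c_m$, transitivity together with the definition of the lower order ideal $I(c_m)$ forces both $a_i$ and $b_{\{i,j\}}$ to lie in $I(c_m)$. Thus the maximal chain is a maximal 2-chain in the lower order ideal $I(c_m)$.

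Next I would invoke the preceding proposition, which assigns to each such 2-chain $a_i < b_{\{i,j\}} < c_m$ the edge of $\varphi(I(c_m))$ that is labeled $b_{\{i,j\}}$ and directed away from the vertex $a_i$. Uniqueness is then immediate from the structure set up earlier in the section: each $b_{\{i,j\}}$ appears as an edge in exactly two of the alternately colored $k$-gons, namely $\varphi(I(c_m))$ and $\varphi(I(c_{\sigma^{-1}(m)}))$, and within $\varphi(I(c_m))$ the two endpoints $a_i$ and $a_j$ determine two distinct directed edges, one for each of the two maximal chains $a_i < b_{\{i,j\}} < c_m$ and $a_j < b_{\{i,j\}} < c_m$.

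I do not anticipate a real obstacle; the content is essentially bookkeeping. The only thing that needs a word of justification is that the association is injective on the set of all maximal chains through $c_m$, which follows from the preceding proposition applied internally to $I(c_m)$, together with the fact that distinct maximal chains in $P_k^\sigma$ with the same top $c_m$ remain distinct as 2-chains in $I(c_m)$.
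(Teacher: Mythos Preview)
Your proposal is correct and follows essentially the same approach as the paper. The paper's justification is a single sentence preceding the corollary---``Since the maximal chains in $\mathcal{I}_k^\sigma$ are maximal chains in $P_k^\sigma$, the following becomes an immediate consequence''---and your argument is just a more detailed unpacking of this same observation, identifying maximal chains of $P_k^\sigma$ ending at $c_m$ with maximal $2$-chains in $I(c_m)$ and then invoking the preceding proposition.
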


The last correspondence which will be important to this discussion, is the fact that the bijection $\varphi$ guarantees a unique ordering of the vertices of each $k$-gon up to rotational and reflectional symmetry.

\begin{prop}
    Let $I(c_m),I(c_{\sigma(m)})\in\I_k^\sigma$. Then for the alternately colroed $k$-gons $\varphi(I(c_m))$ and $\varphi(I(c_{\sigma(m)}))$ which share a color, no matter the ordering of the vertices around each $k$-gon at least one pair of adjacent vertices will be ordered in reverse in each $k$-gon.
\end{prop}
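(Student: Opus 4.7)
The plan is to argue by contradiction. Suppose there exist cyclic orientations of $\varphi(I(c_m))$ and $\varphi(I(c_{\sigma(m)}))$ under which every shared-color edge is traversed in matching directions. Write $M_s = B_{\sigma(m)}$ for the shared matching, $M_1 = B_m$, and $M_2 = B_{\sigma^2(m)}$ for the two non-shared matchings (necessarily distinct once the two $k$-gons are distinct, i.e.\ when $\sigma^2(m)\neq m$). The goal is to show these orientations cannot simultaneously exist.

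The first step is to extract a common bipartition of $[k]$ from the assumed orientations. Given the chosen orientation of $\varphi(I(c_m))$, label each vertex $v\in[k]$ as a \emph{source} if the shared-color edge at $v$ is outgoing and as a \emph{sink} otherwise, yielding a function $\chi_1\colon[k]\to\{0,1\}$. Since shared and non-shared edges alternate around the $k$-gon, $\chi_1$ is a proper $2$-coloring of $\varphi(I(c_m))$; define $\chi_2$ analogously from the orientation of $\varphi(I(c_{\sigma(m)}))$. The hypothesis that every shared-color edge has matching orientation in both polygons is exactly equivalent to $\chi_1=\chi_2$. Setting $A=\chi_1^{-1}(0)$ and $B=\chi_1^{-1}(1)$ yields a bipartition of $[k]$ with $|A|=|B|=k/2$ under which each of $M_s$, $M_1$, and $M_2$ is bichromatic, so that $M_s\cup M_1\cup M_2$ is a $3$-regular bipartite subgraph of $K_k$.

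The final step is to derive a contradiction from this forced bipartite configuration by exploiting the structure of $P_k^\sigma$. Because $M_s\cup M_1\cup M_2$ uses only cross-edges between $A$ and $B$, the remaining $k-4$ blocks of the underlying $1$-factorization must cover all $2\binom{k/2}{2}$ within-part edges in addition to the residual between-part edges, while simultaneously respecting the Hamilton-cycle condition that $B_i\cup B_{\sigma(i)}$ is a single $k$-cycle for every $i$ (needed for $\varphi$ to be well-defined). A counting/structural argument then rules this out: for example, when $k=6$ the residual graph is forced to be $K_3\sqcup K_3$, whose two disjoint odd triangles cannot be decomposed into the required two perfect matchings; for general even $k$, parity constraints on the within-part subgraphs combine with the Hamiltonicity requirements along the $\sigma$-orbit to obstruct any valid completion of the $1$-factorization.

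The main obstacle is the last step. Small cases ($k=4,6$) reduce to direct verification using odd-cycle obstructions in the residual graph, but for general $k$ one must track how the derangement $\sigma$ interleaves the Hamilton pairs across its orbit and show that no assignment of the remaining blocks is compatible with both the bipartite constraint from $(A,B)$ and the Hamilton-cycle condition imposed by $\sigma$. This is the combinatorial core that must be settled to complete the argument.
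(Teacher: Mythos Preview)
Your proposal has a genuine gap, and you name it yourself: the final step --- showing for general $k$ that the bipartite constraint from $(A,B)$ together with the Hamiltonicity requirement on each $B_i\cup B_{\sigma(i)}$ obstructs any completion of the $1$-factorization --- is left open. The reduction up to that point is clean (the common bipartition is forced, the three matchings $M_s,M_1,M_2$ are bichromatic, and the within-part edges must be absorbed by the remaining blocks), but none of that is the proposition; the proposition is the obstruction itself, and you have established it only for $k=4,6$ via ad hoc odd-cycle checks. For larger $k$ the residual graph is no longer forced to be a disjoint union of odd cliques, and the phrase ``parity constraints on the within-part subgraphs combine with the Hamiltonicity requirements along the $\sigma$-orbit'' is a description of what would need to be done, not an argument.

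By contrast, the paper's proof is a one-line contradiction that never touches the global $1$-factorization at all. It argues that if every shared-color pair of adjacent vertices were ordered the same way in both $k$-gons, then the bijectivity of $\varphi$ would force $I(c_m)=I(c_{\sigma(m)})$, hence $\sigma(m)=m$, contradicting that $\sigma$ is a derangement. The derangement hypothesis does all the work locally; there is no need to track residual edges, remaining blocks, or Hamiltonicity along the $\sigma$-orbit. Your route converts a local observation into a global extremal problem on $1$-factorizations of $K_k$, and the piece you have not finished is precisely the hardest piece of that detour.
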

\begin{proof}
    This is because if every pair of such vertices $a_{i_1}$ and $a_{i_2}$ for $\{i_1,i_2\}\in B_m$ were ordered the same way in $\varphi(I(c_{\sigma(m)}))$ that they were in $\varphi(I(c_m))$, the fact that $\varphi$ is a bijection implies that $I(c_m)=I(c_{\sigma(m)})$ and furthermore that $\sigma(m)=m$ making $m$ a fixed point of $\sigma$, a contradiction. 
\end{proof}

\begin{cor}\label{reverse_kgon}
    Let $I(c_m),I(c_{\sigma^{-1}(m)})\in\I_k^\sigma$. Then for any two alternately colroed $k$-gons $\varphi(I(c_m))$ and $\varphi(I(c_{\sigma^{-1}(m)}))$ which share a color, no matter the ordering of the vertices around the $k$-gon at least one pair of adjacent vertices will be ordered in reverse in each $k$-gon.  
\end{cor}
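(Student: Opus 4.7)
The plan is to observe that Corollary \ref{reverse_kgon} is essentially a restatement of the preceding proposition with the roles of $\sigma$ and $\sigma^{-1}$ interchanged, and so the argument transfers almost verbatim. The key observation is that $\sigma \in S_{k-1}$ is a derangement, and the inverse of any derangement is also a derangement: indeed, $\sigma^{-1}(m) = m$ if and only if $\sigma(m) = m$, so $\sigma^{-1}$ has no fixed points either.

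From here, I would argue in one of two equivalent ways. The cleaner option is simply to invoke the preceding proposition: set $m' = \sigma^{-1}(m)$, so that the pair $\varphi(I(c_{m'}))$ and $\varphi(I(c_{\sigma(m')})) = \varphi(I(c_m))$ share a color (namely $\overline{B_m}$), and the preceding proposition yields a pair of adjacent vertices ordered in reverse between them, which is exactly the statement of the corollary. The alternative is to mimic the proof directly: assume for contradiction that every pair $a_{i_1}, a_{i_2}$ with $\{i_1,i_2\} \in B_m$ (the shared-color block) appears in the same cyclic order around $\varphi(I(c_m))$ and around $\varphi(I(c_{\sigma^{-1}(m)}))$. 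Then the two alternately colored $k$-gons agree on the $\overline{B_m}$-edges and carry the other color $\overline{B_{\sigma^{-1}(m)}}$ in the same alternating pattern, so under $\varphi$ they represent the same labeled ideal; by injectivity of $\varphi$ this forces $I(c_m) = I(c_{\sigma^{-1}(m)})$ and hence $\sigma^{-1}(m) = m$, contradicting that $\sigma^{-1}$ is a derangement.

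There is no real obstacle here; the only thing to be careful about is that the labeling convention used for $\varphi(I(c_m))$ (vertices $a_i$ with adjacency determined by the elements $b_{\{i,j\}} \in I(c_m)$) makes the $k$-gon rigid enough that ``shared color and same cyclic order on that color's edges'' really does force equality of ideals. This rigidity was already implicit in the discussion following Proposition \ref{bijection}, so the corollary follows immediately.
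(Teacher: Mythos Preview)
Your proposal is correct and matches the paper's treatment: the paper states Corollary \ref{reverse_kgon} with no proof at all, treating it as an immediate consequence of the preceding proposition, which is exactly what your substitution $m' = \sigma^{-1}(m)$ accomplishes. Your observation that $\sigma^{-1}$ is again a derangement is the only content needed, and your alternative direct argument simply reproduces the proposition's proof verbatim with $\sigma^{-1}$ in place of $\sigma$.
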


\begin{prop}\label{bdry_prop}
    Any 1-chain of the form $(x_0<x_1)$ in the order complex $\Delta(P_k^\sigma)$ appears (having the same sign) in the boundary of exactly two 2-chains. 
\end{prop}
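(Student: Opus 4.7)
The plan is to argue directly from Definition \ref{pksigma}. Since $P_k^\sigma$ is ranked of rank $2$ with the three levels $\{a_i\}$, $\{b_{\{i,j\}}\}$, $\{c_m\}$, every $2$-chain of $\Delta(P_k^\sigma)$ is a saturated maximal chain $(a_i < b_{\{i,j\}} < c_m)$ whose simplicial boundary is
\[
\partial(a_i<b_{\{i,j\}}<c_m) = (b_{\{i,j\}}<c_m) - (a_i<c_m) + (a_i<b_{\{i,j\}}).
\]
Hence the claim reduces to showing that every $1$-chain in $\Delta(P_k^\sigma)$ is contained in exactly two maximal chains; the sign statement is then automatic, because a $1$-chain of a fixed type---bottom-middle, middle-top, or bottom-top---always occupies the same syntactic slot of whatever $2$-chain contains it, so the sign with which it appears in $\partial$ depends only on its type.

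I would then split into the three possible types of $1$-chains and apply a different clause of Definition \ref{pksigma} to each. For $(a_i < b_{\{i,j\}})$ I count the covers of $b_{\{i,j\}}$: by clause (4) these are precisely the $c_m$ with $\{i,j\} \in B_m \cup B_{\sigma(m)}$, and since the blocks partition $\binom{[k]}{2}$ the pair $\{i,j\}$ belongs to a unique block $B_n$. This yields exactly the two covers $c_n$ and $c_{\sigma^{-1}(n)}$, distinct because $\sigma$ is a derangement. For $(b_{\{i,j\}} < c_m)$, clause (2) immediately gives that $b_{\{i,j\}}$ covers exactly $a_i$ and $a_j$, producing two maximal chains.

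The mixed case $(a_i < c_m)$ is where the construction has to be used most carefully. By clause (3), every index appears in exactly one pair of each block, so $i$ lies in a unique pair $\{i,j_1\} \in B_m$ and a unique pair $\{i,j_2\} \in B_{\sigma(m)}$. Because $\sigma$ is a derangement, $\sigma(m) \neq m$, so the blocks $B_m$ and $B_{\sigma(m)}$ are disjoint parts of the partition and therefore $j_1 \neq j_2$. Thus the middle-level elements strictly between $a_i$ and $c_m$ are precisely $b_{\{i,j_1\}}$ and $b_{\{i,j_2\}}$, yielding exactly two maximal chains whose boundary contains $(a_i < c_m)$.

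The main obstacle, and really the only nontrivial point, is this last case: both the derangement hypothesis and the partition property of the blocks must be invoked simultaneously to guarantee two distinct intermediate vertices. The other two cases fall out directly from the defining data of $P_k^\sigma$. Notably, the argument is purely internal to the poset and does not require the bijection $\varphi$ or the alternately colored $k$-gon picture developed in this section.
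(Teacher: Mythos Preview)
Your proof is correct and follows essentially the same three-case analysis as the paper's own argument. If anything, you are slightly more careful: you explicitly invoke the derangement hypothesis to guarantee that the two extending elements are distinct in the cases $(a_i<b_{\{i,j\}})$ and $(a_i<c_m)$, and you spell out why the sign is automatic, points the paper leaves implicit.
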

\begin{proof}
The proof involves analyzing three cases:\begin{enumerate}
    \item[A)]  Consider the 1-chain $(b_{\{i,j\}}<c_m)$. Since each element $b_{\{i,j\}}\in P_k^\sigma$ covers exactly two elements $a_i,a_j\in P_k^\sigma$, there are only two vertices which can be added to $(b_{\{i,j\}}<c_m)$ to make it a 2-chain in $P_k^\sigma$: the vertices $a_i$ and $a_j$. Hence, the only 2-chains of $P_k^\sigma$ which contain $-(b_{\{i,j\}}<c_m)$ in their boundaries are $(a_i<b_{\{i,j\}}<c_m)$ and $(a_j<b_{\{i,j\}}<c_m)$.

    \item[B)]  Consider the 1-chain $(a_i<c_m)$. By the way that $P_k^\sigma$ was constructed, particularly in step 3, the element $i\in[k]$ appears exactly once in each block $B_m$ and $B_{\sigma(m)}$. Thus for $\{i,j\}\in B_m$ and $\{i,h\}\in B_{\sigma(m)}$ the only two vertices which can be inserted into $(a_i<c_m)$ to make a 2-chain in $P_k^\sigma$ are $b_{\{i,j\}}$ and $b_{\{i,h\}}$. Hence, the only 2-chains which have $(a_i<c_m)$ in their boundary are $(a_i<b_{\{i,j\}}<c_m)$ and $(a_i<b_{\{i,h\}}<c_m)$.

     \item[C)] The last case is that of  the 1-chain $(a_i<b_{\{i,j\}})$. Since $B_1,...,B_{k-1}$ is a partition of $\binom{[k]}{2}$, the unordered pair $\{i,j\}$ appears in only one of those blocks. Let $B_m$ be this block. Because of step 4 of the construction of $P_k^\sigma$, there are only two elements of $P_k^\sigma$ which cover $b_{\{i,j\}}$: $c_m$ and $c_{\sigma^{-1}(m)}$. Hence, the only two 2-chains with $-(a_i<b_{\{i,j\}})$ in their boundary are $(a_i<b_{\{i,j\}}<c_m)$ and $(a_i<b_{\{i,j\}}<c_{\sigma^{-1}(m)})$. 
\end{enumerate}
       
\end{proof}

For any finite ranked poset $P$ with rank $q$, the chain group $C_n(\Delta(P))$ is generated by the maximal chains of $P$. 
Given the correspondence between maximal chains of 
$P_k^\sigma$
  and alternately colored 
$k$-gons, how does the boundary operator interact with this correspondence?

\begin{prop}\label{Prop32}
    Consider the bijection $\varphi:\I_k^\sigma\to AC_k(k-1)$. Two 2-chains in the order complex $\Delta(P_k^\sigma)$ share a 1-chain in their boundary if one of the three conditions are met:
    \begin{itemize}
        \item[C1.] For $I(c_m)\in\I_k^\sigma$, they share the same edge in $\varphi(I(c_m))$ but have opposing directions.
        \item[C2.] For $I(c_m)\in\I_k^\sigma$, they are both directed away from the same vertex in $\varphi(I(c_m))$.
        \item[C3.] For $I(c_m),I(c_{\sigma^{-1}(m)})\in\I_k^\sigma$, their directed edges are the same color in each $k$-gon $\varphi(I(c_m))$ and $\varphi(I(c_{\sigma^{-1}(m)}))$ but they are directed away from the same vertex.
    \end{itemize}
\end{prop}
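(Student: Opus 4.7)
The plan is to translate the three cases from Proposition \ref{bdry_prop} into the language of alternately colored $k$-gons using the bijection $\varphi$ and the convention that a $2$-chain $(a_i<b_{\{i,j\}}<c_m)$ corresponds to the edge of $\varphi(I(c_m))$ directed away from the vertex $a_i$. Since Proposition \ref{bdry_prop} already enumerates, for each shape of $1$-chain in $\Delta(P_k^\sigma)$, the unique pair of $2$-chains having that $1$-chain in their boundary, the proof reduces to reading off what those pairs look like geometrically after applying $\varphi$.

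First I would handle case (A) of Proposition \ref{bdry_prop}: for a $1$-chain $(b_{\{i,j\}}<c_m)$, the two $2$-chains are $(a_i<b_{\{i,j\}}<c_m)$ and $(a_j<b_{\{i,j\}}<c_m)$, both lying in the same lower order ideal $I(c_m)$. Under $\varphi$ they correspond to two directed versions of the single edge $b_{\{i,j\}}$ of $\varphi(I(c_m))$, one directed away from $a_i$ and the other away from $a_j$; this is exactly condition C1. Next, for case (B), a $1$-chain $(a_i<c_m)$ yields $(a_i<b_{\{i,j\}}<c_m)$ and $(a_i<b_{\{i,h\}}<c_m)$ with $\{i,j\}\in B_m$ and $\{i,h\}\in B_{\sigma(m)}$; both again live in $I(c_m)$, and under $\varphi$ they become the two edges of $\varphi(I(c_m))$ incident to the vertex $a_i$, each directed away from $a_i$, which is condition C2.

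The one genuinely nontrivial case is (C), a $1$-chain $(a_i<b_{\{i,j\}})$, whose two completing $2$-chains are $(a_i<b_{\{i,j\}}<c_m)$ and $(a_i<b_{\{i,j\}}<c_{\sigma^{-1}(m)})$ for the unique block $B_m$ containing $\{i,j\}$. Here the two $2$-chains sit in different ideals, so $\varphi$ sends them to edges in the two distinct $k$-gons $\varphi(I(c_m))$ and $\varphi(I(c_{\sigma^{-1}(m)}))$. The key observation is that both edges represent the same covering relation $b_{\{i,j\}}$, so by the color assignment in the proof of Proposition \ref{bijection} they carry the same color $\ob{B_m}$, namely the color shared by these two $k$-gons; moreover both are directed away from the common vertex $a_i$, matching condition C3. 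I would emphasize here that Corollary \ref{reverse_kgon} already confirms that despite sharing an edge color, the two $k$-gons may order their vertices differently, so the phrase ``directed away from the same vertex'' must be interpreted vertex-labelwise (by the label $a_i$), not via a global embedding.

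The main obstacle I anticipate is precisely this bookkeeping in case (C): one must verify that the two orientations really are both outgoing from $a_i$, rather than one outgoing and one incoming, and that the shared color is correctly identified as $\ob{B_m}$ rather than $\ob{B_{\sigma(m)}}$ or $\ob{B_{\sigma^{-1}(m)}}$. Both points follow directly from the orientation convention (the orientation is recorded by the minimal element of the $2$-chain) together with the definition of $\varphi$ (the color of $b_{\{i,j\}}$ is determined by the unique block of $\binom{[k]}{2}$ containing $\{i,j\}$). Once these are pinned down, the proof is a direct case-by-case dictionary lookup with no further computation required.
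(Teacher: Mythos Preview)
Your proposal is correct and matches the paper's argument in substance: both proofs identify exactly the same three pairs of $2$-chains $(a_i<b_{\{i,j\}}<c_m),(a_j<b_{\{i,j\}}<c_m)$ for C1, $(a_i<b_{\{i,j\}}<c_m),(a_i<b_{\{i,h\}}<c_m)$ for C2, and $(a_i<b_{\{i,j\}}<c_m),(a_i<b_{\{i,j\}}<c_{\sigma^{-1}(m)})$ for C3, and verify the shared $1$-chain in each case. The only organizational difference is direction: the paper starts from each geometric condition C1--C3 and reads off the shared $1$-chain, whereas you start from the three $1$-chain shapes classified in Proposition~\ref{bdry_prop} and translate each into its geometric picture under $\varphi$; your route thus also yields the converse for free, but otherwise the two arguments coincide.
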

\begin{proof}
    For C1, if the two 2-chains share the same edge in $\varphi(I(c_m))$ with vertices $a_i$ and $a_j$, then they will be of the form $(a_i<b_{\{i,j\}}<c_m)$ and $(a_j<b_{\{i,j\}}<c_m)$. Thus they will share the 1-chain $(b_{\{i,j\}}<c_m)$ in their boundaries. For C2, if the two 2-chains are directed away from the vertex $a_i$ in $\varphi(I(c_m))$ with edges $b_{\{i,j\}}$ and $b_{\{i,h\}}$, then they will be the chains $(a_i<b_{\{i,j\}}<c_m)$ and $(a_i<b_{\{i,h\}}<c_m)$ and they will share the 1-chain $(a_i<c_m)$ in their boundaries. Note that for C3, if the two 2-chains correspond to edges in different $k$-gons with the same color but directed away from the same vertex $a_i$, then their colored edges must be represented by the same $b_{\{i,j\}}$ because of the way $P_k^\sigma$ and $\varphi$ were constructed. Hence, any two 2-chains like those in C3 must be of the form $(a_i<b_{\{i,j\}}<c_m)$ and $(a_i<b_{\{i,j\}}<c_{\sigma^{-1}(m)})$. Thus, they share the same 1-chain $(a_i<b_{\{i,j\}})$ in their boundaries.
\end{proof}

\begin{example}\label{Ex}
    For $\Delta(P_4^\sigma)$ with $P_4^\sigma$ as in Example \ref{p_4^(123)}, the two 2-chains $(a_1<b_{\{1,3\}}<c_1)$ and $(a_3<b_{\{1,3\}}<c_1)$ are represented by the directed edges in Figure \ref{f:p4c1}, and both have $-(b_{\{1,3\}}<c_1)$ in their boundaries. The two 2-chains $(a_1<b_{\{1,3\}}<c_1)$ and $(a_1<b_{\{1,2\}}<c_1)$ are represented by the directed edges in Figure \ref{f:p4c2}, and both have $(a_1<c_1)$ in their boundaries. Last, the two 2-chains $(a_1<b_{\{1,3\}}<c_1)$ and $(a_1<b_{\{1,3\}}<c_2)$ are represented by the directed edges in Figure \ref{f:p4c3}, and both have $-(a_1<b_{\{1,3\}})$ in their boundaries.
\end{example}

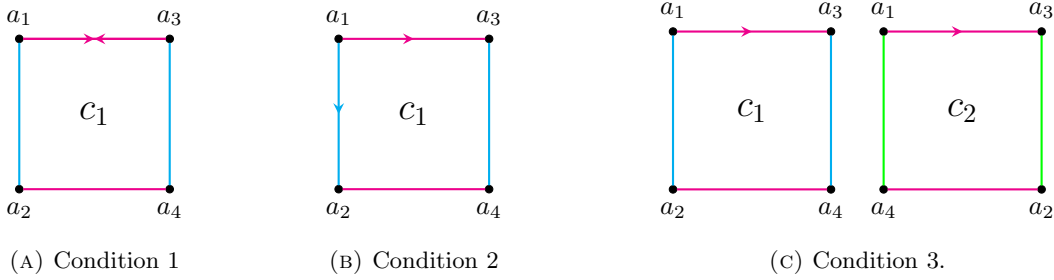
\begin{figure}[h]

\begin{subfigure}{0.25\textwidth}
    \begin{center}
    \begin{tikzpicture}[scale=0.50]
        \node[circle, draw, fill=black, scale=0.3] (0) [label=north:$a_1$] at (0,0) {};
        \node[circle, draw, fill=black, scale=0.3] (1) [label=south:$a_2$] at (0,-4) {};
        \node[circle, draw, fill=black, scale=0.3] (2) [label=south:$a_4$] at (4,-4) {};
        \node[circle, draw, fill=black, scale=0.3] (3) [label=north:$a_3$] at (4,0) {};
        \node at (2,-2) {\LARGE $c_1$};

        \draw[cyan, thick] (0) -- (1);
        \draw[magenta, thick] (1) -- (2);
        \draw[cyan, thick] (2) -- (3);
        \draw[magenta, thick, postaction={on each segment={mid arrow=magenta}}] (3) -- (0);
        \draw[magenta, thick, postaction={on each segment={mid arrow=magenta}}] (0) -- (3);

    \end{tikzpicture}
    \end{center}
    \caption{Condition 1}\label{f:p4c1}
    \end{subfigure}
    \begin{subfigure}{0.25\textwidth}
    \begin{center} 
    \begin{tikzpicture}[scale=0.5]
        \node[circle, draw, fill=black, scale=0.3] (0) [label=north:$a_1$] at (0,0) {};
        \node[circle, draw, fill=black, scale=0.3] (1) [label=south:$a_2$] at (0,-4) {};
        \node[circle, draw, fill=black, scale=0.3] (2) [label=south:$a_4$] at (4,-4) {};
        \node[circle, draw, fill=black, scale=0.3] (3) [label=north:$a_3$] at (4,0) {};
        \node at (2,-2) {\LARGE $c_1$};

        \draw[cyan, thick, postaction={on each segment={mid arrow=cyan}}] (0) -- (1);
        \draw[magenta, thick] (1) -- (2);
        \draw[cyan, thick] (2) -- (3);
        \draw[magenta, thick, postaction={on each segment={mid arrow=magenta}}] (0) -- (3);
    \end{tikzpicture}
    \end{center}
    \caption{Condition 2 }\label{f:p4c2}
    \end{subfigure}
    \begin{subfigure}{0.45\textwidth}
    \begin{center}     
    \begin{tikzpicture}[scale=0.7]
        \node[circle, draw, fill=black, scale=0.3] (0) [label=north:$a_1$] at (0,0) {};
        \node[circle, draw, fill=black, scale=0.3] (1) [label=south:$a_2$] at (0,-3) {};
        \node[circle, draw, fill=black, scale=0.3] (2) [label=south:$a_4$] at (3,-3) {};
        \node[circle, draw, fill=black, scale=0.3] (3) [label=north:$a_3$] at (3,0) {};
        \node at (1.5,-1.5) {\LARGE $c_1$};

        \node[circle, draw, fill=black, scale=0.3] (4) [label=north:$a_1$] at (4,0) {};
        \node[circle, draw, fill=black, scale=0.3] (5) [label=south:$a_4$] at (4,-3) {};
        \node[circle, draw, fill=black, scale=0.3] (6) [label=south:$a_2$] at (7,-3) {};
        \node[circle, draw, fill=black, scale=0.3] (7) [label=north:$a_3$] at (7,0) {};
        \node at (5.5,-1.5) {\LARGE $c_2$};

        \draw[cyan, thick] (0) -- (1);
        \draw[magenta, thick] (1) -- (2);
        \draw[cyan, thick] (2) -- (3);
        \draw[magenta, thick, postaction={on each segment={mid arrow=magenta}}] (0) -- (3);

        \draw[green, thick] (4) -- (5);
        \draw[magenta, thick] (5) -- (6);
        \draw[green, thick] (6) -- (7);
        \draw[magenta, thick, postaction={on each segment={mid arrow=magenta}}] (4) -- (7);
    \end{tikzpicture}
    \end{center}
    \caption{Condition 3.}\label{f:p4c3}
\end{subfigure}
\caption{Conditions on  $\Delta(P_4^\sigma)$   in Example \ref{Ex}.}
\end{figure}

Next, consider the multi-index notation $J=(i,\{i,j\},m)$ for $i\in[k]$, $\{i,j\}\in B_m\cup B_{\sigma(m)}$, and $m\in[k-1]$. For any chain 
\begin{equation}
    \beta=\sum_{\substack{i\in[k] \\ \{i,j\}\in B_m\cup B_{\sigma(m)}\\ m\in[k-1]}}v_J(a_i<b_{\{i,j\}}<c_m)\in C_2(\Delta(P_k^\sigma))
    \label{beta}
\end{equation}
with $v_J\in\Z$, we want to use the correspondence between maximal chains and alternately colored $k$-gons to show that $\partial\beta\neq\alpha$ where
\[\alpha=\sum_{\substack{\{i,j\}\in B_m\cup B_{\sigma(m)} \\ m\in[k-1]}}(b_{\{i,j\}}<c_m)\;-\sum_{\substack{i\in[k] \\ m\in[k-1]}}(a_i<c_m)\;+\sum_{\substack{i,j\in[k] \\ i\neq j}}(a_i<b_{\{i,j\}}).\]
In the discussion to follow, $\alpha$ will always refer to the above 1-chain.

\begin{prop}
    Let $\beta$ be as in Equation \ref{beta}. Then in order for $\partial\beta$ to have a coefficient of $1$ next to $(b_{\{i,j\}}<c_m)$, the coefficients $v_{J_1},v_{J_2}\in\Z$ must satisfy
    \begin{equation}
        v_{J_1}+v_{J_2}=1
    \label{coeffs1}
     \end{equation}
    for $J_1=(i,\{i,j\},m)$ and $J_2=(j,\{i,j\},m)$.
\end{prop}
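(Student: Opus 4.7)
The plan is to read off the coefficient of $(b_{\{i,j\}}<c_m)$ in $\partial\beta$ directly from the standard boundary formula on the order complex and then invoke Proposition \ref{bdry_prop} to see that only two summands of $\beta$ can contribute. Recall that for any $2$-chain $(x_0<x_1<x_2)\in C_2(\Delta(P_k^\sigma))$ the boundary is
\[
\partial(x_0<x_1<x_2)=(x_1<x_2)-(x_0<x_2)+(x_0<x_1),
\]
so in particular the 1-chain $(b_{\{i,j\}}<c_m)$ appears in $\partial(a_r<b_{\{i,j\}}<c_m)$ with coefficient exactly $+1$ whenever $r\in\{i,j\}$, and with coefficient $0$ in the boundary of every other generator.

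Next I would apply Proposition \ref{bdry_prop}(A): the 1-chain $(b_{\{i,j\}}<c_m)$ appears in the boundaries of \emph{exactly} two 2-chains of $P_k^\sigma$, namely $(a_i<b_{\{i,j\}}<c_m)$ and $(a_j<b_{\{i,j\}}<c_m)$. Translating this to our index set, the only multi-indices $J=(r,\{r,s\},m')$ for which the generator $(a_r<b_{\{r,s\}}<c_{m'})$ contributes a nonzero multiple of $(b_{\{i,j\}}<c_m)$ to $\partial\beta$ are $J_1=(i,\{i,j\},m)$ and $J_2=(j,\{i,j\},m)$, and in each case that contribution is with a coefficient of $+1$ from the boundary formula above.

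Combining these two observations, the coefficient of $(b_{\{i,j\}}<c_m)$ in $\partial\beta$ is precisely
\[
v_{J_1}\cdot 1 + v_{J_2}\cdot 1 \;=\; v_{J_1}+v_{J_2}.
\]
Thus demanding that this coefficient equal $1$ is equivalent to Equation (\ref{coeffs1}). There is no real obstacle: the only point that requires care is making sure the signs from $\partial(a_r<b_{\{r,s\}}<c_{m'})=(b_{\{r,s\}}<c_{m'})-(a_r<c_{m'})+(a_r<b_{\{r,s\}})$ are applied correctly, and checking via Proposition \ref{bdry_prop}(A) that no other generator of $C_2(\Delta(P_k^\sigma))$ contributes to the coefficient in question.
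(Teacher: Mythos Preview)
Your proof is correct and follows essentially the same approach as the paper's: invoke Proposition \ref{bdry_prop}(A) to identify the only two $2$-chains whose boundaries contain $(b_{\{i,j\}}<c_m)$, and then read off the resulting linear condition on the coefficients $v_{J_1},v_{J_2}$. You are in fact more explicit than the paper about the boundary formula and the sign bookkeeping.
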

\begin{proof}
    Proposition \ref{bdry_prop} insures that the only two 2-chains which contain $-(b_{\{i,j\}}<c_m)$ in their boundary are $(a_i<b_{\{i,j\}}<c_m)$ and $(a_j<b_{\{i,j\}}<c_m)$. Now $v_{J_1}$ is the coefficient for $(a_i<b_{\{i,j\}}<c_m)$ and the coefficient for $(a_j<b_{\{i,j\}}<c_m)$ is $v_{J_2}$. Thus, in order to get a coefficient of $-1$ next to $(b_{\{i,j\}}<c_m)$ in $\partial\beta$, Equation \ref{coeffs1} must be satisfied.
\end{proof}

Similar proofs also show that similar equations must be satisfied for the 1-chains $(a_i<b_{\{i,j\}})$ and $(a_i<c_m)$ to have coefficients of $1$ and $-1$, respectively, in the boundary of a 2-chain.

\begin{cor}
    Let $\beta$ be as in Equation \ref{beta}. Then in order for $\partial\beta$ to have a coefficient of $-1$ next to $(a_i<c_m)$, the coefficients $v_{H_1},v_{H_2}\in\Z$ must satisfy
    \begin{equation}
        v_{H_1}+v_{H_2}=1
    \label{coeffs2}
    \end{equation}
    for $H_1=(i,\{i,j\},m)$, $H_2=(i,\{i,h\},m)$, and $\{i,j\},\{i,h\}\in B_m$.
\end{cor}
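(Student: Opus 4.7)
The plan is to mirror the proof of the preceding proposition, replacing case (A) of Proposition \ref{bdry_prop} with case (B). First I would apply case (B) to isolate the two 2-chains of $\Delta(P_k^\sigma)$ whose boundary contains $(a_i<c_m)$. By step 3 of the construction of $P_k^\sigma$, the element $i\in[k]$ occurs in exactly one pair of $B_m$ and in exactly one pair of $B_{\sigma(m)}$, so there are unique indices $j,h\in[k]$ with $\{i,j\}\in B_m$ and $\{i,h\}\in B_{\sigma(m)}$; the two 2-chains in question are then $(a_i<b_{\{i,j\}}<c_m)$ and $(a_i<b_{\{i,h\}}<c_m)$, and no other summand of $\beta$ contributes to the coefficient of $(a_i<c_m)$ in $\partial\beta$.

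Next I would read off that coefficient using the standard simplicial boundary $\partial(x_0<x_1<x_2)=(x_1<x_2)-(x_0<x_2)+(x_0<x_1)$. In each of the two 2-chains above, $(a_i<c_m)$ is the middle face and therefore appears with sign $-1$, so the two summands contribute $-v_{H_1}$ and $-v_{H_2}$ respectively. Requiring the total coefficient $-v_{H_1}-v_{H_2}$ to equal $-1$ yields the desired Equation \ref{coeffs2}.

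The main (very minor) obstacle is bookkeeping: one must be careful that the indices $H_1$ and $H_2$ correspond to the unique pairings of $i$ taken from the two distinct blocks $B_m$ and $B_{\sigma(m)}$ whose union governs the covers of $c_m$, rather than a single block. Apart from this indexing check, the argument is a direct transcription of the proof just given for $(b_{\{i,j\}}<c_m)$, and an entirely analogous argument (invoking case (C) of Proposition \ref{bdry_prop}) produces the parallel equation governing the coefficient of $(a_i<b_{\{i,j\}})$ in $\partial\beta$.
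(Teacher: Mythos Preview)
Your argument is correct and is exactly the approach the paper intends: the paper offers no explicit proof, stating only that ``similar proofs also show that similar equations must be satisfied,'' and your write-up simply carries out that similar proof by invoking case (B) of Proposition~\ref{bdry_prop} and reading off the sign of the middle face. You also correctly flagged the indexing issue---the statement's ``$\{i,j\},\{i,h\}\in B_m$'' should read $\{i,j\}\in B_m$ and $\{i,h\}\in B_{\sigma(m)}$, as case (B) of Proposition~\ref{bdry_prop} makes clear---so your bookkeeping caveat is well placed.
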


\begin{cor}
    Let $\beta$ be as in Equation \ref{beta}. Then in order for $\partial\beta$ to have a coefficient of $1$ next to $(a_i<b_{\{i,j\}})$, the coefficients $v_{L_1},v_{L_2}\in\Z$ must satisfy
    \begin{equation}
        v_{L_1}+v_{L_2}=1
    \label{coeffs3}
    \end{equation}
    for $L_1=(i,\{i,j\},m)$ and $L_2=(i,\{i,j\},\sigma^{-1}(m))$.
\end{cor}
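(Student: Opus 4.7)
The plan is to mirror the arguments already given for the preceding proposition (on $(b_{\{i,j\}}<c_m)$) and the first corollary (on $(a_i<c_m)$), this time invoking case (C) of Proposition \ref{bdry_prop}. By the construction of $P_k^\sigma$, the unordered pair $\{i,j\}$ belongs to a unique block, which I shall call $B_m$; then step (4) of Definition \ref{pksigma} together with the fact that $\sigma$ is a derangement tells me that $b_{\{i,j\}}$ is covered by exactly two elements of $P_k^\sigma$, namely $c_m$ and $c_{\sigma^{-1}(m)}$. Consequently, the only 2-chains in $\Delta(P_k^\sigma)$ whose boundary contains a term proportional to $(a_i<b_{\{i,j\}})$ are $(a_i<b_{\{i,j\}}<c_m)$ and $(a_i<b_{\{i,j\}}<c_{\sigma^{-1}(m)})$.

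Next I would pin down the sign. Applying the simplicial boundary formula to an ordered 2-chain $(x_0<x_1<x_2)$, the face $(x_0<x_1)$ appears with coefficient $(-1)^2=+1$, so both of the 2-chains identified above contribute $(a_i<b_{\{i,j\}})$ with coefficient $+1$ to their boundaries. In the expansion of $\beta$ in Equation \ref{beta}, these two 2-chains carry weights $v_{L_1}$ and $v_{L_2}$ with $L_1=(i,\{i,j\},m)$ and $L_2=(i,\{i,j\},\sigma^{-1}(m))$, so the total coefficient of $(a_i<b_{\{i,j\}})$ in $\partial\beta$ is exactly $v_{L_1}+v_{L_2}$. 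Matching this against the $+1$ coefficient of $(a_i<b_{\{i,j\}})$ in $\alpha$ immediately yields Equation \ref{coeffs3}.

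There is essentially no obstacle here: the derangement condition ensures $\sigma^{-1}(m)\neq m$, so $L_1$ and $L_2$ really do correspond to two distinct 2-chains and Equation \ref{coeffs3} is a non-trivial constraint. The only bookkeeping step is the verification that $c_m$ and $c_{\sigma^{-1}(m)}$ are the unique upper covers of $b_{\{i,j\}}$, which is already explicit in Proposition \ref{bdry_prop}(C). The argument is otherwise mechanically parallel to the preceding proposition and its first corollary, the only difference being the combinatorial role played by $\sigma^{-1}$ (inherited from case (C) of Proposition \ref{bdry_prop}) rather than the covering relations that governed cases (A) and (B).
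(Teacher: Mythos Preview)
Your proposal is correct and follows exactly the line the paper intends: the paper does not give a separate proof for this corollary but simply remarks that ``similar proofs also show that similar equations must be satisfied,'' and your argument is precisely that similar proof, invoking case~(C) of Proposition~\ref{bdry_prop} in place of case~(A). Your sign computation is in fact cleaner than the paper's own treatment of the analogous cases.
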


\begin{prop}\label{notaboundary}
    Let $\beta$ be as in Equation \ref{beta}. Then $\partial\beta\neq\alpha$.
\end{prop}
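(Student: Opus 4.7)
The plan is proof by contradiction. Assume $\partial\beta = \alpha$; then Equations \ref{coeffs1}, \ref{coeffs2}, and \ref{coeffs3} must all hold simultaneously, and the goal is to derive a numerical impossibility over $\Z$. The conceptual key, guided by Proposition \ref{Prop32}, is to think of each coefficient $v_{(i,\{i,j\},m)}$ as a weight on the directed edge of the $k$-gon $\varphi(I(c_m))$ that points outward from $a_i$. Under this dictionary, the three coefficient relations translate to: (C1) the two oppositely directed copies of a single edge carry weights summing to $1$; (C2) the two outgoing directed edges at a fixed vertex carry weights summing to $1$; and (C3) for an edge of the shared color in the pair $\varphi(I(c_m))$ and $\varphi(I(c_{\sigma^{-1}(m)}))$, the two weights emanating from a common endpoint (one in each $k$-gon) sum to $1$.

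I would first do a local analysis inside one $k$-gon $\varphi(I(c_m))$. Fixing a cyclic orientation $\omega_m$, Condition C1 asserts that the weight in the $\omega_m$-direction and the weight in the reverse direction of each edge add to $1$. Applying C2 at a vertex $v$, where $v$ is the $\omega_m$-tail of one incident edge and the $\omega_m$-head of the other, forces the two adjacent $\omega_m$-weights to be equal. Iterating around the cycle shows that all $\omega_m$-oriented edges share a single integer weight $w_m \in \Z$, with the opposite direction carrying weight $1-w_m$. Thus the unknowns inside $\varphi(I(c_m))$ collapse to one integer $w_m$ per $k$-gon.

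Finally I would apply C3 across the pair $(m,\sigma^{-1}(m))$, whose shared color is $B_m$. Each shared edge is either \emph{aligned} (its $\omega_m$-tail agrees with its $\omega_{\sigma^{-1}(m)}$-tail) or \emph{anti-aligned}; an aligned shared edge yields $w_m + w_{\sigma^{-1}(m)} = 1$, while an anti-aligned shared edge yields $w_m = w_{\sigma^{-1}(m)}$. The main obstacle is to show that \emph{both} kinds of shared edges must appear. Corollary \ref{reverse_kgon} directly supplies an anti-aligned edge for any choice of cyclic orderings; applying the same corollary after flipping one of $\omega_m, \omega_{\sigma^{-1}(m)}$ and translating the conclusion back to the original orderings produces an aligned edge. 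Hence both C3 relations are forced simultaneously, giving $2w_m = 1$, which has no solution in $\Z$. This contradicts $\partial\beta = \alpha$ and proves the proposition.
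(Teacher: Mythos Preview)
Your argument is correct and follows essentially the same route as the paper's proof: reduce the unknowns inside each $k$-gon to a single clockwise weight via C1 and C2, then use C3 on the shared color together with Corollary~\ref{reverse_kgon} to force two incompatible relations between the weights of the neighbouring $k$-gons. The only cosmetic difference is that the paper obtains the aligned shared edge by \emph{choosing} the orientation of the second $k$-gon so that the initial shared edge is aligned (and then invokes Corollary~\ref{reverse_kgon} once for the anti-aligned edge), whereas you obtain both an aligned and an anti-aligned edge by invoking the corollary twice, once with each orientation of the second $k$-gon; the resulting contradiction $2w_m=1$ is the same.
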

\begin{proof}
    If $\partial\beta=\alpha$ then Equation \ref{coeffs1}, Equation \ref{coeffs2}, and Equation \ref{coeffs3} must all be satisfied. Let $J=(i,\{i,j\},m)$ and let $v_J=z\in\Z$. Notice that  $v_J$ corresponds to the coefficient next to the 2-chain $(a_i<b_{\{i,j\}}<c_m)$, and this corresponds to a maximal chain of $P_k^\sigma$. Next maximal chains of $P_k^\sigma$ can be represented as directed edges through the bijection $\varphi$. Hence, $(a_i<b_{\{i,j\}}<c_m)$ represents a directed edge in $\varphi(I(c_m))$. Thus, assigning coefficients to 2-chains becomes the same as assigning weights to the directed edges of each $k$-gon $\varphi(I(c_m))$. Let the directed edge representing $(a_i<b_{\{i,j\}}<c_m)$ be directed in the clockwise direction around the $k$-gon $\varphi(I(c_m))$. Then this edge has the weight $z$.
    Condition C1 from Proposition \ref{Prop32} and Equation \ref{coeffs1} imply that the same edge having the opposite direction will have a weight of $1-z$. Proposition \ref{Prop32} Case C2 and Equation \ref{coeffs2} imply that the other edge sharing the same vertex will have a weight of $1-z$. The last case C3  in Proposition \ref{Prop32} and Equation \ref{coeffs3} imply that the same edge directed in the same way (directed away from $a_i$, but not necessarily clockwise) in the $k$-gon $\varphi(I(c_{\sigma^{-1}(m)}))$ will have a weight of $1-z$. Then repeatedly applying C1 with Equation \ref{coeffs1} and C2 with Equation \ref{coeffs2} to $\varphi(I(c_m))$ yields that all the edges directed clockwise must have a weight of $z$ while all the edges directed counterclockwise must have a weight of $1-z$. Since the ordering of the vertices of each $k$-gon are unique, arrange $\varphi(I(c_{\sigma^{-1}(m)}))$ such that the edge representing $(a_i<b_{\{i,j\}}<c_m)$ is directed clockwise. Corollary \ref{reverse_kgon} guarantees that at least one of the edges which $\varphi(I(c_m))$ and $\varphi(I(c_{\sigma^{-1}(m)}))$ share is ordered in reverse in each $k$-gon. Applying C1 with Equation \ref{coeffs1} and C2 with Equation \ref{coeffs2} to $\varphi(I(c_{\sigma^{-1}(m)}))$ yields that every edge directed clockwise in $\varphi(I(c_{\sigma^{-1}(m)}))$ has a weight of $1-z$, and thus at least one of the directed edges $(a_{i^\ast}<b_{\{i^\ast,j^\ast\}}<c_{\sigma^{-1}(m)})$ for $\{i^\ast,j^\ast\}\in B_m$ has a weight of $1-z$. However, $(a_{i^\ast}<b_{\{i^\ast,j^\ast\}}<c_m)$ will also have a weight of $1-z$ in $\varphi(I(c_m))$. Since the weights for these edges correspond to the coefficients $v_{L_1}$ and $v_{L_2}$, where $L_1=(i^\ast,\{i^\ast,j^\ast\},c_m)$ and $L_2=(i^\ast,\{i^\ast,j^\ast\},\sigma^{-1}(m))$, we have
    \[v_{L_1}+v_{L_2}=2-2z\neq 1\]
    for any $z\in\Z$. This contradicts Equation \ref{coeffs3}, implying that Equations \ref{coeffs1}, \ref{coeffs2}, and \ref{coeffs3} cannot all be satisfied simultaneously. 
\end{proof}

\begin{thm}
    There exists a subgroup $\Z_2\trianglelefteq H_1(\Delta(P_k^\sigma))$ generated by $[\alpha]$.
\end{thm}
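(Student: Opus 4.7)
The plan is to establish three facts about the $1$-chain $\alpha$: (i) $\partial\alpha=0$, so $\alpha$ represents a class $[\alpha]\in H_1(\Delta(P_k^\sigma))$; (ii) $[\alpha]\neq 0$, which follows directly from Proposition \ref{notaboundary}; and (iii) $2[\alpha]=0$, which I verify by exhibiting an explicit $2$-chain $\beta_0$ with $\partial\beta_0=2\alpha$. Together these force $[\alpha]$ to generate a cyclic subgroup of order exactly two, which is the desired $\Z_2\trianglelefteq H_1(\Delta(P_k^\sigma))$.

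For (i), I would compute $\partial\alpha$ using $\partial(x_0<x_1)=x_1-x_0$ and check that the coefficient of every vertex vanishes. For each top vertex $c_m$, the first sum of $\alpha$ contributes $+k$ because $|B_m\cup B_{\sigma(m)}|=k$ (each block has size $k/2$ and the two are disjoint since $\sigma$ is a derangement), while the second sum contributes $-k$ because every $i\in[k]$ lies below $c_m$. For each middle vertex $b_{\{i,j\}}$, the first sum contributes $-2$, since $b_{\{i,j\}}$ is covered by exactly the two elements $c_{m^*}$ and $c_{\sigma^{-1}(m^*)}$ with $\{i,j\}\in B_{m^*}$, while the third sum contributes $+2$, since $b_{\{i,j\}}$ covers exactly the two atoms $a_i$ and $a_j$. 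Each bottom vertex $a_i$ picks up $+(k-1)$ from the second sum and $-(k-1)$ from the third. All contributions cancel.

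Step (ii) is immediate from Proposition \ref{notaboundary}, because the general $\beta$ in Equation \ref{beta} parametrises all of $C_2(\Delta(P_k^\sigma))$: in the rank-$2$ poset $P_k^\sigma$ every $2$-chain is automatically a maximal chain $(a_i<b_{\{i,j\}}<c_m)$. For (iii), I would let $\beta_0$ be the sum of all maximal chains of $P_k^\sigma$ each taken with coefficient $+1$, and compute $\partial\beta_0$. By the three cases of Proposition \ref{bdry_prop}, every generator $(b_{\{i,j\}}<c_m)$, $(a_i<c_m)$, and $(a_i<b_{\{i,j\}})$ of $C_1(\Delta(P_k^\sigma))$ appears in the boundary of exactly two summands of $\beta_0$, each with the same sign, yielding coefficients $+2$, $-2$, and $+2$ respectively. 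These exactly match $2\alpha$, so $\partial\beta_0=2\alpha$.

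The main obstacle has already been cleared by Proposition \ref{notaboundary}: ruling out $\alpha$ as a boundary is the subtle step, and it crucially uses both the derangement hypothesis on $\sigma$ and the orientation-reversal statement of Corollary \ref{reverse_kgon}. Once that is in hand, (i) and (iii) reduce to coefficient bookkeeping driven by the block structure of $P_k^\sigma$, and the theorem follows.
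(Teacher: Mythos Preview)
Your proposal is correct and follows essentially the same route as the paper: both invoke Proposition \ref{notaboundary} for $[\alpha]\neq 0$ and both exhibit the sum of all maximal $2$-chains (your $\beta_0$, the paper's $\gamma$) to get $\partial\beta_0=2\alpha$ via Proposition \ref{bdry_prop}. The only cosmetic difference is that you verify $\partial\alpha=0$ by a direct coefficient count, whereas the paper deduces $\alpha\in\ker\partial_1$ immediately from $2\alpha\in\im\partial_2$ and the fact that $C_0$ is torsion-free.
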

\begin{proof}
   Proposition \ref{notaboundary} proves that $\alpha$ is not a boundary so that $\alpha\notin\im{\partial_2}$. Now consider the element 
    \[\gamma=\sum_{\substack{i\in[k]\\ \{i,j\}\in B_m\cup B_{\sigma(m)}\\ m\in[k-1]}}(a_i<b_{\{i,j\}}<c_m)\in C_2(\Delta(P_k^\sigma)).\]
    Proposition \ref{bdry_prop} then implies that $\partial\gamma=2\alpha$ so that $2\alpha\in\im{\partial_2}$ and consequently $\alpha\in\ker{\partial_1}$. Therefore 
    \[\Z_2\cong\langle[\alpha]\rangle\trianglelefteq H_1(\Delta(P_k^\sigma)).\]
\end{proof}

Let $G_k^\sigma$ be the graph with vertex set $\widehat{P_k^\sigma}$ and edge set $\{\{a,b\}\,|\,a\lessdot b\in \widehat{P_k^\sigma}\}$. The previous theorem can then be restated in terms of magnitude and Eulerian magnitude homology via Theorem \ref{posetTor}. 

\begin{cor}
    There exists a subgroup $\Z_2\trianglelefteq MH_{3,4}(G_k^\sigma)$ and a subgroup $\Z_2\trianglelefteq EMH_{3,4}(G_k^\sigma)$ both generated by $[\alpha]$ where
    \[\alpha=-\sum_{\substack{\{i,j\}\in B_m\cup B_{\sigma(m)} \\ m\in[k-1]}}(\hat 0,b_{\{i,j\}},c_m,\hat 1)\;+\sum_{\substack{i\in[k] \\ m\in[k-1]}}(\hat 0,a_i,c_m,\hat 1)\;-\sum_{\substack{i,j\in[k] \\ i\neq j}}(\hat 0,a_i,b_{\{i,j\}},\hat 1).\]
\end{cor}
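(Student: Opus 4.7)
The plan is to transport the $\Z_2$-torsion from $H_1(\Delta(P_k^\sigma))$, established in the preceding theorem, into both $MH_{3,4}(G_k^\sigma)$ and $EMH_{3,4}(G_k^\sigma)$ using Theorem \ref{posetTor}. By construction every maximal chain of $P_k^\sigma$ has the form $a_i < b_{\{i,j\}} < c_m$, so $P_k^\sigma$ is ranked of rank $2$ and $\widehat{P_k^\sigma}$ has rank $4$. Since $G_k^\sigma = \mathcal{G}(\widehat{P_k^\sigma})$, Theorem \ref{posetTor} specialized to $\ast = 1$ yields
\[
MH_{3,4}(\hat 0,\hat 1) \;\cong\; H_1(\Delta(P_k^\sigma)) \;\cong\; EMH_{3,4}(\hat 0,\hat 1),
\]
and the direct sum decompositions of magnitude and Eulerian magnitude homology over pairs of vertices display each side as a direct summand of $MH_{3,4}(G_k^\sigma)$ and $EMH_{3,4}(G_k^\sigma)$ respectively. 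The previous theorem produces a $\Z_2$-subgroup of $H_1(\Delta(P_k^\sigma))$ generated by
\[
\alpha_\Delta := \sum (b_{\{i,j\}} < c_m)\;-\;\sum (a_i < c_m)\;+\;\sum (a_i < b_{\{i,j\}}),
\]
which will yield the claimed subgroup once transported across these isomorphisms.

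The remaining task is to identify the image of $[\alpha_\Delta]$ explicitly. The isomorphism of Theorem \ref{posetTor} is realized by the chain map
\[
\phi_n : C_n(\Delta(P_k^\sigma)) \longrightarrow MC_{n+2,4}(\hat 0,\hat 1), \qquad (x_0 < \cdots < x_n) \longmapsto \varepsilon_n\,(\hat 0, x_0, \ldots, x_n, \hat 1),
\]
whose signs $\varepsilon_n$ are forced by compatibility with the boundaries. The simplicial boundary of an $n$-chain omits $x_i$ with sign $(-1)^i$, whereas the magnitude-homology boundary omits the same vertex $x_i$ from the associated path with sign $(-1)^{i+1}$; this off-by-one shift forces $\varepsilon_{n-1} = -\varepsilon_n$, so setting $\varepsilon_1 = -1$ pins down the convention. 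Applying $\phi_1$ to $\alpha_\Delta$ then reverses the sign of each summand and produces exactly the cycle $\alpha$ displayed in the statement. Moreover, since $\widehat{P_k^\sigma}$ is ranked, every path of length $4$ from $\hat 0$ to $\hat 1$ in $G_k^\sigma$ is automatically Eulerian, so the same chain map realizes the Eulerian isomorphism and yields the identical generator in $EMH_{3,4}(G_k^\sigma)$.

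The only genuine obstacle here is the sign bookkeeping in $\phi_\ast$; everything else is a direct composition of the preceding theorem, Theorem \ref{posetTor}, and the vertex-pair direct sum decomposition.
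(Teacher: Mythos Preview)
Your argument is correct and follows the same route as the paper, which simply remarks that the preceding theorem ``can then be restated in terms of magnitude and Eulerian magnitude homology via Theorem \ref{posetTor}.'' You have filled in precisely the details the paper leaves implicit: the rank computation $\text{rk}(\widehat{P_k^\sigma})=4$, the explicit chain isomorphism $(x_0<\cdots<x_n)\mapsto\varepsilon_n(\hat0,x_0,\ldots,x_n,\hat1)$, and the sign relation $\varepsilon_{n-1}=-\varepsilon_n$ that accounts for the global sign flip between the order-complex $\alpha_\Delta$ and the magnitude-chain $\alpha$. One small remark: since $[\alpha]$ has order $2$, the choice $\varepsilon_1=-1$ versus $\varepsilon_1=+1$ is immaterial at the level of homology classes, so your sign convention is a cosmetic match to the displayed formula rather than a substantive constraint.
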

 \section{Regular CW structures and torsion}\label{CW}
Upon closer examination of the posets $P_k^\sigma$ and the isomorphism $\varphi$, it becomes evident that the $k$-gons can be realized as a CW structure by appropriately identifying edges and vertices. As it turns out, the elements of $P_k^\sigma$ are all cells of a CW structure on $\mathbb{RP}^2$ ordered by topological closure inclusion. Thus, the posets $P_k^\sigma$ are not face posets of a trignaulation, but rather are face posets of a CW complex. Thus far the only examples of torsion in magnitude homology come from posets with 2-torsion. 

Our goal is to provide additional examples of posets (and consequently graphs) with 
p-torsion in their order (and magnitude) homology groups for $p>2$.
 We also aim to present a poset with a rank higher than 2 that contains torsion in its order homology.  
Since triangulating is computationally intensive, our goal was to find a more efficient way of producing the desired posets and graphs. Kaneta and Yoshinaga   \cite{Kaneta_Yoshinaga} use results from  \cite{kozlov2007combinatorial} when justifying the fact that the barycentric subdivision $\text{Bd}(M)$ of the triangulation of a manifold is homeomorphic to the geometric realization $|\Delta(F(M))|$ of the order complex of the face poset of $M$. In this section we use a stronger result from Kozlov's book. 

\begin{defn}
    For a regular CW complex $K$, let $\mathcal{F}(K)$ denote the the poset of all closures of nonempty cells of $K$ ordered by inclusion.
\end{defn}

\begin{prop}[\cite{kozlov2007combinatorial}]\label{Koz}
    Any arbitrary regular CW complex $K$ is homeomorphic to the geometric realization of the order complex of some poset. In particular $|K|\cong|\text{Bd}(K)|\cong|\Delta(\mathcal{F}(K))|$ where $\text{Bd}(K)$ is the barycentric subdivision of $K$. 
\end{prop}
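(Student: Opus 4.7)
The plan is to construct an explicit homeomorphism $\Phi : |\Delta(\mathcal{F}(K))| \to |K|$ by induction on the dimension of $K$, with the barycentric subdivision $\text{Bd}(K)$ serving as the intermediate object. On the combinatorial side, one identifies $\text{Bd}(K)$ with $\Delta(\mathcal{F}(K))$ essentially by definition: the vertices of $\text{Bd}(K)$ are the barycenters $\hat e$ of cells of $K$, and its simplices are precisely the chains $e_0 \subsetneq e_1 \subsetneq \cdots \subsetneq e_k$ of closed cells, matching the chain structure that generates $\Delta(\mathcal{F}(K))$. So the real content lies in proving $|K| \cong |\text{Bd}(K)|$.

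For the base case, a $0$-dimensional $K$ is a discrete set of vertices and both realizations coincide. For the inductive step, I would assume the homeomorphism holds on the $(n-1)$-skeleton $K^{(n-1)}$ and extend over each open $n$-cell $e$. The subposet $\mathcal{F}(K)_{\leq \overline{e}}$ has order complex equal to the cone with apex $\hat e$ over $\Delta(\mathcal{F}(K)_{<\overline{e}})$. The key point is that, by regularity, the characteristic map of $e$ is a homeomorphism on the closed disk, so $\overline{e} \cong D^n$ and $\partial\overline{e} \cong S^{n-1}$ inherits a regular CW decomposition from $K^{(n-1)}$. The inductive hypothesis identifies $|\Delta(\mathcal{F}(K)_{<\overline{e}})|$ with $\partial \overline{e}$, and coning from a fixed interior point (e.g., the image of $0$ under the characteristic map of $e$) yields a homeomorphism from the cone complex to $\overline{e}$ by radial extension.

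These cell-wise homeomorphisms glue consistently because the restriction of the extension on $|\Delta(\mathcal{F}(K)_{\leq \overline{e}})|$ to $|\Delta(\mathcal{F}(K)_{<\overline{e}})|$ agrees, by construction, with the inductively defined homeomorphism on $K^{(n-1)}$. The resulting $\Phi$ is a continuous bijection, and it is a homeomorphism because the finiteness hypothesis makes $|\Delta(\mathcal{F}(K))|$ compact while $|K|$ is Hausdorff.

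The main obstacle is handling regularity correctly: in a non-regular CW complex, the attaching map of an $n$-cell need not be injective on $\partial D^n$, so $\overline{e}$ need not be a closed disk and $\partial e$ need not be a sphere. This would break both the identification of $|\Delta(\mathcal{F}(K)_{<\overline{e}})|$ with $\partial\overline{e}$ and the subsequent radial cone extension. Regularity is precisely the hypothesis that makes every step of the recursive extension a genuine homeomorphism of disks, and is also what justifies the well-definedness of $\text{Bd}(K)$ as a simplicial complex in the first place.
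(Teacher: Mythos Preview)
The paper does not prove this proposition at all: it is quoted directly from Kozlov's book \cite{kozlov2007combinatorial} and used as a black box, so there is no ``paper's own proof'' to compare against. Your sketch is the standard argument one finds in that reference and is essentially correct.

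One small point worth tightening: the proposition as stated says \emph{arbitrary} regular CW complex, not a finite one, so your final appeal to ``compact maps to Hausdorff'' does not cover the general case. In the infinite setting you instead argue that both $|K|$ and $|\Delta(\mathcal{F}(K))|$ carry the weak topology with respect to their closed cells/simplices, and since your $\Phi$ restricts to a homeomorphism on each closed cell by construction, it is a homeomorphism globally. For the purposes of this paper the distinction is moot, since Theorem~\ref{CW_thm} only applies the result to finite-dimensional complexes with finitely many cells in each dimension.
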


Kaneta-Yoshinaga embedding and the Asao-Izumihara isomorphism with Proposition \ref{Koz} provides a way to strengthen the extended results of Sazdanovic and Summers. 

\begin{defn}
    Let $K$ be a finite dimensional regular CW complex with finitely many cells in each dimension. Define $\mathcal{G}(K)$ as the graph of the Hasse diagram of $\widehat{\mathcal{F}}(K)$, the cell closure inclusion poset of $K$ with $\hat0$ and $\hat1$ adjoined.
\end{defn}

\begin{thm}\label{CW_thm}
    Let $K$ be a finite dimensional regular CW complex with finitely many cells in each dimension. Then there exists an isomorphism 
    \[EMH_{k+2,\text{rk}(\widehat{\mathcal{F}}(K))}(\hat0,\hat1)\cong H_k(\Delta(\mathcal{F}(K)))\cong MH_{k+2,\text{rk}(\widehat{\mathcal{F}}(K))}(\hat0,\hat1)\]
    where $\hat0,\hat1\in V(\mathcal{G}(K))$ are the appropriate vertices corresponding to $\hat0,\hat1\in\widehat{\mathcal{F}}(K)$.
\end{thm}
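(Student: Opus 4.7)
The proof reduces almost entirely to verifying the hypotheses of Theorem \ref{posetTor} for the poset $P = \mathcal{F}(K)$. Indeed, the stated isomorphisms
\[EMH_{k+2,\text{rk}(\widehat{\mathcal{F}}(K))}(\hat0,\hat1)\cong H_k(\Delta(\mathcal{F}(K)))\cong MH_{k+2,\text{rk}(\widehat{\mathcal{F}}(K))}(\hat0,\hat1)\]
are obtained from Theorem \ref{posetTor} by substituting $P = \mathcal{F}(K)$, once we know this poset is ranked. The plan is therefore twofold: first, show that the face poset of a regular CW complex is ranked; second, invoke Theorem \ref{posetTor}.

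For the first step, I would argue as follows. Suppose $K$ is a regular CW complex of dimension $d$, and let $\sigma \lessdot \tau$ be a covering relation in $\mathcal{F}(K)$, i.e.\ $\overline{\sigma} \subsetneq \overline{\tau}$ with no closure of a cell strictly between them. The regularity hypothesis guarantees that for each open cell $\tau$ of dimension $n$, the attaching map restricts to a homeomorphism of $\overline{\tau}$ onto a closed $n$-ball, and $\partial \tau$ inherits a regular CW decomposition of the $(n-1)$-sphere. It follows by induction on dimension that every maximal chain in the order ideal $\{\rho : \overline{\rho} \subseteq \overline{\tau}\}$ has length exactly $\dim(\tau)+1$ (one element for each dimension from $0$ up through $n$). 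In particular, a covering relation forces $\dim(\tau) = \dim(\sigma)+1$, so $\mathcal{F}(K)$ is graded by cell dimension. Adjoining $\hat 0$ and $\hat 1$ to form $\widehat{\mathcal{F}}(K)$ preserves this property, and every maximal chain in $\widehat{\mathcal{F}}(K)$ has the same length $\text{rk}(\widehat{\mathcal{F}}(K)) = d+2$.

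For the second step, since $\mathcal{F}(K)$ is now known to be a ranked poset, Theorem \ref{posetTor} applies directly with $P = \mathcal{F}(K)$, $\hat 0, \hat 1 \in V(\mathcal{G}(\widehat{\mathcal{F}}(K)))$, and $\ast = k$, yielding exactly the claimed chain of isomorphisms. Note that no appeal to Proposition \ref{Koz} is strictly necessary for the statement as written, since the middle term is already phrased in terms of $H_k(\Delta(\mathcal{F}(K)))$; however, Proposition \ref{Koz} is what makes the theorem \emph{useful} in practice, as it identifies $H_k(\Delta(\mathcal{F}(K)))$ with $H_k(|K|)$, thereby allowing one to read off torsion in magnitude homology directly from the singular homology of the CW complex.

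The main obstacle is the rankedness verification in the first step. Without regularity this would fail: a general (non-regular) CW decomposition can have a $2$-cell attached so that its boundary traverses a $0$-cell without passing through a $1$-cell in between, breaking the grading condition. The essential use of regularity is exactly to rule out such pathologies, ensuring $\overline{\tau} \setminus \tau$ is a regular CW sphere of the expected dimension. Once this geometric input is in place, the rest of the argument is formal.
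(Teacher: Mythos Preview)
Your proposal is correct and matches the paper's approach: the paper states Theorem~\ref{CW_thm} without a separate proof, treating it as an immediate consequence of Theorem~\ref{posetTor} applied to $P=\mathcal{F}(K)$ (the sentence preceding the theorem indicates this). Your write-up is in fact more complete than the paper's, since you explicitly verify that $\mathcal{F}(K)$ is ranked---a hypothesis the paper uses but does not check---and you correctly observe that Proposition~\ref{Koz} is needed only for the subsequent corollary, not for the theorem as stated.
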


\begin{cor}
    Let $K$ now be a topological space which has a finite dimensional regular CW complex structure with finitely many cells in each dimension. Then
    $EMH_{k+2,\text{rk}(\widehat{\mathcal{F}}(K))}(\hat0,\hat1)\cong H_k(K)\cong MH_{k+2,\text{rk}(\widehat{\mathcal{F}}(K))}(\hat0,\hat1).$
\end{cor}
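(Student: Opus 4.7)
The plan is to obtain this corollary by stringing together two results already in hand: Theorem \ref{CW_thm} and Kozlov's realization result, Proposition \ref{Koz}. The hypothesis on $K$ is designed so that both results apply to a common regular CW structure, and after that the proof is essentially formal.

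First I would fix a regular CW decomposition of $K$ satisfying the finiteness hypotheses, and apply Theorem \ref{CW_thm} to this CW complex. This yields the isomorphisms
\[EMH_{k+2,\text{rk}(\widehat{\mathcal{F}}(K))}(\hat0,\hat1)\cong H_k(\Delta(\mathcal{F}(K)))\cong MH_{k+2,\text{rk}(\widehat{\mathcal{F}}(K))}(\hat0,\hat1),\]
so it remains to identify the middle term $H_k(\Delta(\mathcal{F}(K)))$ with the singular homology $H_k(K)$ of the topological space.

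Next I would invoke Proposition \ref{Koz}, which gives a homeomorphism of geometric realizations
\[|K|\cong|\mathrm{Bd}(K)|\cong|\Delta(\mathcal{F}(K))|.\]
Since singular homology is a homeomorphism invariant and the simplicial homology of an order complex agrees with the singular homology of its geometric realization, this produces
\[H_k(\Delta(\mathcal{F}(K)))\cong H_k(|\Delta(\mathcal{F}(K))|)\cong H_k(|K|)\cong H_k(K).\]
Substituting into the chain of isomorphisms from Theorem \ref{CW_thm} completes the proof.

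The only mild obstacle is a bookkeeping check, namely that the regular CW structure chosen for $K$ is indeed finite-dimensional with finitely many cells in each dimension so that Theorem \ref{CW_thm} applies verbatim; this is part of the hypothesis, so no extra work is required. Everything else is a direct application of the named results, so I would keep the written proof to a single short paragraph that cites Theorem \ref{CW_thm} and Proposition \ref{Koz} and composes the isomorphisms.
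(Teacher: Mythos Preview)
Your proposal is correct and matches the paper's intended argument: the corollary is stated immediately after Theorem \ref{CW_thm} without proof, so it is meant to follow directly by combining that theorem with Proposition \ref{Koz}, exactly as you outline.
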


The importance of this theorem lies in the fact that we can relax the requirements given by Kaneta-Yoshinaga and Sazdanovic-Summers to embed the homology of a well-known object into the magnitude homology of a graph. We no longer need to restrict ourselves only to triangulable manifolds. Instead, we can now consider a wider class of topological spaces which can be equipped with a regular CW structure. Specifically, when looking for posets and graphs with $p$-torsion in their order and magnitude homologies, we can now consider the Moore spaces $M(G,n),$  see 
\cite{hatcher2005algebraic} for the details,

Because triangulations are difficult and tedious to compute, we were also restricted to concrete examples of posets (and graphs) with $p$-torsion in their order (and magnitude) homologies which were of rank $2$ (and length $4$). 
Finding a regular CW structure on a space is much easier, and thus gives us a way to find explicit examples of $2$ and $3$ torsion in posets (and graphs) of rank $3$ (and length $5$) by finding regular CW structures on the lens spaces. Therefore, in Table \ref{torsion_graphs} we provide a list of graphs which contain $3$- and $5$-torsion in their magnitude and Eulerian magnitude homologies.

\begin{table}
\centering
    \begin{tabular}{|c|c|c|}
        \hline
        \multicolumn{1}{|c|}{Top. Space $K$} & CW-Structure & $\mathcal{G}(K)$ \\
        \hline
        \multicolumn{1}{|c|}{$M(\mathbb{Z}_3,1)$} & 
        \begin{tabular}{c}
        \begin{tikzpicture}[scale=0.5]
        \node[circle, draw, fill=black, scale=0.2] (a) [label=west:$a_2$] at (-2,2) {};
        \node[circle, draw, fill=black, scale=0.2] (b) [label=east:$a_2$] at (2,2) {};
        \node[circle, draw, fill=black, scale=0.2] (c) [label=south:$a_2$] at (0,-1.41) {};

        \draw[fill=none](0,0.845299) circle (2.34);

        \node[circle, draw, fill=black, scale=0.2] (o) [label=east:$a_0$] at (0,0.845299) {};

        \node[circle, draw, fill=black, scale=0.2] (g) [label=north:$a_1$] at (0,3.1547005) {};
        \node[circle, draw, fill=black, scale=0.2] (h) [label=west:$a_1$] at (-2,-0.309) {};
        \node[circle, draw, fill=black, scale=0.2] (i) [label=east:$a_1$] at (2,-0.309) {};

        \draw (o) -- (a);
        \draw (o) -- (b);
        \draw (o) -- (c);
        \draw (o) -- (g);
        \draw (o) -- (h);
        \draw (o) -- (i);
    \end{tikzpicture}
        \end{tabular}&
        \begin{tabular}{c}
        \begin{tikzpicture}[scale=.7]
            \node[circle, draw, fill=black, scale=0.3] (1)  at (-2,2) {};
            \node[circle, draw, fill=black, scale=0.3] (0)  at (0,2) {};
            \node[circle, draw, fill=black, scale=0.3] (2)  at (2,2) {};
            \node[circle, draw, fill=black, scale=0.3] (12)  at (-3.5,4) {};
            \node[circle, draw, fill=black, scale=0.3] (01)  at (-2.5,4) {};
            \node[circle, draw, fill=black, scale=0.3] (06)  at (-1.5,4) {};
            \node[circle, draw, fill=black, scale=0.3] (02)  at (-.5,4) {};
            \node[circle, draw, fill=black, scale=0.3] (05)  at (.5,4) {};
            \node[circle, draw, fill=black, scale=0.3] (03)  at (1.5,4) {};
            \node[circle, draw, fill=black, scale=0.3] (04)  at (2.5,4) {};
            \node[circle, draw, fill=black, scale=0.3] (23)  at (3.5,4) {};
            \node[circle, draw, fill=black, scale=0.3] (506)  at (-2.5,6) {};
            \node[circle, draw, fill=black, scale=0.3] (203)  at (-1.5,6) {};
            \node[circle, draw, fill=black, scale=0.3] (304)  at (-.5,6) {};
            \node[circle, draw, fill=black, scale=0.3] (405)  at (.5,6) {};
            \node[circle, draw, fill=black, scale=0.3] (102)  at (1.5,6) {};
            \node[circle, draw, fill=black, scale=0.3] (106)  at (2.5,6) {};

            \node[circle, draw, fill=black, scale=0.3] (top) at (0,7) {};

            \node[circle, draw, fill=black, scale=0.3] (bottom) at (0,1) {};

            \draw (0) -- (01) -- (102);
            \draw (0) -- (02) -- (203);
            \draw (0) -- (03) -- (304);
            \draw (0) -- (04) -- (405);
            \draw (0) -- (05) -- (506);
            \draw (0) -- (06) -- (106);
            \draw (1) -- (01) -- (106);
            \draw (1) -- (03) -- (203);
            \draw (1) -- (05) -- (405);
            \draw (1) -- (12) -- (102);
            \draw (1) -- (23) -- (203);
            \draw (2) -- (02) -- (102);
            \draw (2) -- (04) -- (304);
            \draw (2) -- (06) -- (506);
            \draw (2) -- (12) -- (304);
            \draw (2) -- (23) -- (405);
            \draw (12) -- (506);
            \draw (23) -- (106);
            \draw (bottom) -- (0);
            \draw (bottom) -- (1);
            \draw (bottom) -- (2);
            \draw (top) -- (102);
            \draw (top) -- (203);
            \draw (top) -- (304);
            \draw (top) -- (405);
            \draw (top) -- (506);
            \draw (top) -- (106);
        \end{tikzpicture} 
        \end{tabular}
           \\
        \hline
        \multicolumn{1}{|c|}{$M(\mathbb{Z}_5,1)$} & 
        \begin{tabular}{c}
        \begin{tikzpicture}[scale=1]
            \node[circle, draw, fill=black, scale=0.3] (a) [label=north:$a_1$] at (0,1) {};
            \node[circle, draw, fill=black, scale=0.2] (b) [label=west:$a_2$] at (-0.587786,0.809017) {};
            \node[circle, draw, fill=black, scale=0.3] (c) [label=west:$a_1$] at (-0.951057,0.309017) {};
            \node[circle, draw, fill=black, scale=0.3] (d) [label=west:$a_2$] at (-0.951057,-0.309017) {};
            \node[circle, draw, fill=black, scale=0.3] (e) [label=west:$a_1$] at (-0.587786,-0.809017) {};
            \node[circle, draw, fill=black, scale=0.3] (f) [label=south:$a_2$] at (0,-1) {};
            \node[circle, draw, fill=black, scale=0.3] (g) [label=east:$a_2$] at (0.587786,0.809017) {};
            \node[circle, draw, fill=black, scale=0.3] (h) [label=east:$a_1$] at (0.951057,0.309017) {};
            \node[circle, draw, fill=black, scale=0.3] (i) [label=east:$a_2$] at (0.951057,-0.309017) {};
            \node[circle, draw, fill=black, scale=0.3] (j) [label=east:$a_1$] at (0.587786,-0.809017) {};

            \draw[fill=none](0,0) circle (1);

            \node[circle, draw, fill=black, scale=0.3] (o) [label=east:$a_0$] at (0,0) {};

            \draw (o) -- (a);
            \draw (o) -- (b);
            \draw (o) -- (c);
            \draw (o) -- (d);
            \draw (o) -- (e);
            \draw (o) -- (f);
            \draw (o) -- (g);
            \draw (o) -- (h);
            \draw (o) -- (i);
            \draw (o) -- (j);
        \end{tikzpicture}
        \end{tabular}&  
        \begin{tabular}{c}
             \begin{tikzpicture}[scale=0.7]
            \node[circle, draw, fill=black, scale=0.3] (bottom)  at (0,1) {};

            \node[circle, draw, fill=black, scale=0.3] (1)  at (-2,2) {};
            \node[circle, draw, fill=black, scale=0.3] (0)  at (0,2) {};
            \node[circle, draw, fill=black, scale=0.3] (2)  at (2,2) {};

            \node[circle, draw, fill=black, scale=0.3] (12)  at (-3.85,4) {};
            \node[circle, draw, fill=black, scale=0.3] (01)  at (-3.15,4) {};
            \node[circle, draw, fill=black, scale=0.3] (02)  at (-2.45,4) {};
            \node[circle, draw, fill=black, scale=0.3] (03)  at (-1.75,4) {};
            \node[circle, draw, fill=black, scale=0.3] (04)  at (-1.05,4) {};
            \node[circle, draw, fill=black, scale=0.3] (05)  at (-.35,4) {};
            \node[circle, draw, fill=black, scale=0.3] (06)  at (.35,4) {};
            \node[circle, draw, fill=black, scale=0.3] (07)  at (1.05,4) {};
            \node[circle, draw, fill=black, scale=0.3] (08)  at (1.75,4) {};
            \node[circle, draw, fill=black, scale=0.3] (09)  at (2.45,4) {};
            \node[circle, draw, fill=black, scale=0.3] (010)  at (3.15,4) {};
            \node[circle, draw, fill=black, scale=0.3] (23)  at (3.85,4) {};

            \node[circle, draw, fill=black, scale=0.3] (102)  at (-3.15,6) {};
            \node[circle, draw, fill=black, scale=0.3] (203)  at (-2.45,6) {};
            \node[circle, draw, fill=black, scale=0.3] (304)  at (-1.75,6) {};
            \node[circle, draw, fill=black, scale=0.3] (405)  at (-1.05,6) {};
            \node[circle, draw, fill=black, scale=0.3] (506)  at (-.35,6) {};
            \node[circle, draw, fill=black, scale=0.3] (607)  at (.35,6) {};
            \node[circle, draw, fill=black, scale=0.3] (708)  at (1.05,6) {};
            \node[circle, draw, fill=black, scale=0.3] (809)  at (1.75,6) {};
            \node[circle, draw, fill=black, scale=0.3] (9010)  at (2.45,6) {};
            \node[circle, draw, fill=black, scale=0.3] (1010)  at (3.15,6) {};

            \node[circle, draw, fill=black, scale=0.3] (top)  at (0,7) {};

            \draw (0) -- (01) -- (102);
            \draw (0) -- (02) -- (203);
            \draw (0) -- (03) -- (304);
            \draw (0) -- (04) -- (405);
            \draw (0) -- (05) -- (506);
            \draw (0) -- (06) -- (607);
            \draw (0) -- (07) -- (708);
            \draw (0) -- (08) -- (809);
            \draw (0) -- (09) -- (9010);
            \draw (0) -- (010) -- (1010);
            \draw (1) -- (01) -- (1010);
            \draw (1) -- (03) -- (203);
            \draw (1) -- (05) -- (405);
            \draw (1) -- (07) -- (607);
            \draw (1) -- (09) -- (809);
            \draw (1) -- (12) -- (102);
            \draw (1) -- (23) -- (203);
            \draw (2) -- (02) -- (102);
            \draw (2) -- (04) -- (304);
            \draw (2) -- (06) -- (506);
            \draw (2) -- (08) -- (708);
            \draw (2) -- (010) -- (9010);
            \draw (2) -- (12) -- (304);
            \draw (2) -- (23) -- (405);
            \draw (12) -- (506);
            \draw (12) -- (708);
            \draw (12) -- (9010);
            \draw (23) -- (607);
            \draw (23) -- (809);
            \draw (23) -- (1010);
            \draw (bottom) -- (0);
            \draw (bottom) -- (1);
            \draw (bottom) -- (2);
            \draw (top) -- (102);
            \draw (top) -- (203);
            \draw (top) -- (304);
            \draw (top) -- (405);
            \draw (top) -- (506);
            \draw (top) -- (607);
            \draw (top) -- (708);
            \draw (top) -- (809);
            \draw (top) -- (9010);
            \draw (top) -- (1010);
        \end{tikzpicture}
        \end{tabular}\\
        \hline
        \multicolumn{1}{|c|}{$L(3,1)$}    & 
        \begin{tabular}{c}
             \begin{tikzpicture}[scale=0.6]
        \shade[ball color = gray!40, opacity = 0.4] (0,0) circle (2.3cm);
        \draw (0,0) circle (2.3cm);
        \draw[name path=equator] (-2.3,0) arc (180:360:2.3 and 0.6);
        \draw[name path=arc_1, rotate=90] (-2.3,0) arc (180:360:2.3 and 1.5);
        \draw[name path=arc_2, rotate=-90] (-2.3,0) arc (180:360:2.3 and 1.8);
        \draw[name path=arc_4, dashed, rotate=90] (-2.3,0) arc (180:360:2.3 and .3);
        \draw[name path=equator_2, dashed] (2.3,0) arc (0:180:2.3 and 0.6);
        \draw[name path=arc_3, rotate=-90] (-2.3,0) arc (180:360:2.3 and .3);
        \draw[name path=arc_5, dashed, rotate=90] (-2.3,0) arc (180:360:2.3 and 1.8);
        \draw[name path=arc_6, dashed, rotate=-90] (-2.3,0) arc (180:360:2.3 and 1.5);

        \node[circle, draw, fill=black, scale=0.3] (0) [label=north:$a_0$] at (0,0) {};
        \node[circle, draw, fill=black, scale=0.3] (7) [label=north:$a_3$]  at (0,2.3) {};
        \node[circle, draw, fill=black, scale=0.3] (8) [label=south:$a_3$] at (0,-2.3) {};
        
        \fill[black, opacity=1, below right, name intersections={of=equator and arc_1}] (intersection-1) circle (1.7pt) node {$a_1$};
        \node[coordinate, name intersections = {of = equator and arc_1}] (1) at  (intersection-1) {};
        \fill[black, opacity=1, below left, name intersections={of=equator and arc_2}] (intersection-1) circle (1.7pt) node {$a_1$};
        \node[coordinate, name intersections = {of = equator and arc_2}] (2) at  (intersection-1) {};
        \fill[black, opacity=1, below right, name intersections={of=equator and arc_3}] (intersection-1) circle (1.7pt) node {$a_2$};
        \node[coordinate, name intersections = {of = equator and arc_3}] (3) at  (intersection-1) {};

        \fill[black, opacity=1, below right, name intersections={of=equator_2 and arc_4}] (intersection-1) circle (1.7pt) node {$a_1$};
        \node[coordinate, name intersections = {of = equator_2 and arc_4}] (4) at  (intersection-1) {};
        \fill[black, opacity=1, below right, name intersections={of=equator_2 and arc_5}] (intersection-1) circle (1.7pt) node {$a_2$};
        \node[coordinate, name intersections = {of = equator_2 and arc_5}] (5) at  (intersection-1) {};
        \fill[black, opacity=1, below right, name intersections={of=equator_2 and arc_6}] (intersection-1) circle (1.7pt) node {$a_2$};
        \node[coordinate, name intersections = {of = equator_2 and arc_6}] (6) at  (intersection-1) {};

        \draw[dashed] (0) -- (1);
        \draw[dashed] (0) -- (2);
        \draw[dashed] (0) -- (3);
        \draw[dashed] (0) -- (4);
        \draw[dashed] (0) -- (5);
        \draw[dashed] (0) -- (6);
        \draw[dashed] (0) -- (7);
        \draw[dashed] (0) -- (8);
    \end{tikzpicture}
        \end{tabular}&
        \begin{tabular}{c}
             \begin{tikzpicture}[scale=0.45]
            \node[circle, draw, fill=black, scale=0.3] (bottom)  at (0,0) {};

            \node[circle, draw, fill=black, scale=0.3] (1)  at (-6,2) {};
            \node[circle, draw, fill=black, scale=0.3] (0)  at (-2,2) {};
            \node[circle, draw, fill=black, scale=0.3] (7) at (2,2) {};
            \node[circle, draw, fill=black, scale=0.3] (2) at (6,2) {};

            \node[circle, draw, fill=black, scale=0.3] (12)  at (-7.5,6) {};
            \node[circle, draw, fill=black, scale=0.3] (17)  at (-6.5,6) {};
            \node[circle, draw, fill=black, scale=0.3] (27)  at (-5.5,6) {};
            \node[circle, draw, fill=black, scale=0.3] (37)  at (-4.5,6) {};
            \node[circle, draw, fill=black, scale=0.3] (01)  at (-3.5,6) {};
            \node[circle, draw, fill=black, scale=0.3] (02)  at (-2.5,6) {};
            \node[circle, draw, fill=black, scale=0.3] (03)  at (-1.5,6) {};
            \node[circle, draw, fill=black, scale=0.3] (07)  at (-.5,6) {};
            \node[circle, draw, fill=black, scale=0.3] (08)  at (.5,6) {};
            \node[circle, draw, fill=black, scale=0.3] (04)  at (1.5,6) {};
            \node[circle, draw, fill=black, scale=0.3] (05)  at (2.5,6) {};
            \node[circle, draw, fill=black, scale=0.3] (06)  at (3.5,6) {};
            \node[circle, draw, fill=black, scale=0.3] (47)  at (4.5,6) {};
            \node[circle, draw, fill=black, scale=0.3] (57)  at (5.5,6) {};
            \node[circle, draw, fill=black, scale=0.3] (67)  at (6.5,6) {};
            \node[circle, draw, fill=black, scale=0.3] (23)  at (7.5,6) {};

            \node[circle, draw, fill=black, scale=0.3] (102)  at (-8.05,10) {};
            \node[circle, draw, fill=black, scale=0.3] (203)  at (-7.35,10) {};
            \node[circle, draw, fill=black, scale=0.3] (304)  at (-6.65,10) {};
            \node[circle, draw, fill=black, scale=0.3] (407)  at (-5.95,10) {};
            \node[circle, draw, fill=black, scale=0.3] (507)  at (-5.25,10) {};
            \node[circle, draw, fill=black, scale=0.3] (607)  at (-4.55,10) {};
            \node[circle, draw, fill=black, scale=0.3] (108)  at (-3.85,10) {};
            \node[circle, draw, fill=black, scale=0.3] (208)  at (-3.15,10) {};
            \node[circle, draw, fill=black, scale=0.3] (308)  at (-2.45,10) {};
            \node[circle, draw, fill=black, scale=0.3] (127)  at (-1.75,10) {};
            \node[circle, draw, fill=black, scale=0.3] (237)  at (-1.05,10) {};
            \node[circle, draw, fill=black, scale=0.3] (347)  at (-.35,10) {};
            \node[circle, draw, fill=black, scale=0.3] (457)  at (.35,10) {};
            \node[circle, draw, fill=black, scale=0.3] (567)  at (1.05,10) {};
            \node[circle, draw, fill=black, scale=0.3] (167)  at (1.75,10) {};
            \node[circle, draw, fill=black, scale=0.3] (408)  at (2.45,10) {};
            \node[circle, draw, fill=black, scale=0.3] (508)  at (3.15,10) {};
            \node[circle, draw, fill=black, scale=0.3] (608)  at (3.85,10) {};
            \node[circle, draw, fill=black, scale=0.3] (107)  at (4.55,10) {};
            \node[circle, draw, fill=black, scale=0.3] (207)  at (5.25,10) {};
            \node[circle, draw, fill=black, scale=0.3] (307)  at (5.95,10) {};
            \node[circle, draw, fill=black, scale=0.3] (405)  at (6.65,10) {};
            \node[circle, draw, fill=black, scale=0.3] (506)  at (7.35,10) {};
            \node[circle, draw, fill=black, scale=0.3] (106)  at (8.05,10) {};

            \node[circle, draw, fill=black, scale=0.3] (1027)  at (-7.15,14) {};
            \node[circle, draw, fill=black, scale=0.3] (2037)  at (-5.85,14) {};
            \node[circle, draw, fill=black, scale=0.3] (3047)  at (-4.55,14) {};
            \node[circle, draw, fill=black, scale=0.3] (4058)  at (-3.25,14) {};
            \node[circle, draw, fill=black, scale=0.3] (5068)  at (-1.95,14) {};
            \node[circle, draw, fill=black, scale=0.3] (1068)  at (-.65,14) {};
            \node[circle, draw, fill=black, scale=0.3] (1028)  at (.65,14) {};
            \node[circle, draw, fill=black, scale=0.3] (2038)  at (1.95,14) {};
            \node[circle, draw, fill=black, scale=0.3] (3048)  at (3.25,14) {};
            \node[circle, draw, fill=black, scale=0.3] (4057)  at (4.55,14) {};
            \node[circle, draw, fill=black, scale=0.3] (5067)  at (5.85,14) {};
            \node[circle, draw, fill=black, scale=0.3] (1067)  at (7.15,14) {};

            \node[circle, draw, fill=black, scale=0.3] (top)  at (0,16) {};

            \draw (0) -- (01) -- (102) -- (1027);
            \draw (0) -- (02) -- (203) -- (2037);
            \draw (0) -- (03) -- (304) -- (3047);
            \draw (0) -- (04) -- (405) -- (4057);
            \draw (0) -- (05) -- (506) -- (5067);
            \draw (0) -- (06) -- (106) -- (1067);
            \draw (0) -- (07);
            \draw (0) -- (08);
            \draw (1) -- (01) -- (107) -- (1027);
            \draw (1) -- (03) -- (307) -- (3047);
            \draw (1) -- (05) -- (507) -- (5067);
            \draw (1) -- (12);
            \draw (1) -- (23);
            \draw (1) -- (17) -- (107) -- (1067);
            \draw (1) -- (37) -- (307) -- (2037);
            \draw (1) -- (57) -- (507) -- (4057);
            \draw (2) -- (02) -- (207) -- (1027);
            \draw (2) -- (04) -- (407) -- (3047);
            \draw (2) -- (06) -- (607) -- (5067);
            \draw (2) -- (12);
            \draw (2) -- (23);
            \draw (2) -- (27) -- (207) -- (2037);
            \draw (2) -- (47) -- (407) -- (4057);
            \draw (2) -- (67) -- (607) -- (1067);
            \draw (7) -- (07);
            \draw (7) -- (08);
            \draw (7) -- (17) -- (108) -- (1028);
            \draw (7) -- (27) -- (208) -- (2038);
            \draw (7) -- (37) -- (308) -- (3048);
            \draw (7) -- (47) -- (408) -- (4058);
            \draw (7) -- (57) -- (508) -- (5068);
            \draw (7) -- (67) -- (608) -- (1068);
            \draw (01) -- (108) -- (1068);
            \draw (02) -- (208) -- (1028);
            \draw (03) -- (308) -- (2038);
            \draw (04) -- (408) -- (3048);
            \draw (05) -- (508) -- (4058);
            \draw (06) -- (608) -- (5068);
            \draw (07) -- (107);
            \draw (07) -- (207);
            \draw (07) -- (307);
            \draw (07) -- (407);
            \draw (07) -- (507);
            \draw (07) -- (607);
            \draw (08) -- (108);
            \draw (08) -- (208);
            \draw (08) -- (308);
            \draw (08) -- (408);
            \draw (08) -- (508);
            \draw (08) -- (608);
            \draw (12) -- (102) -- (1028);
            \draw (12) -- (304) -- (3048);
            \draw (12) -- (506) -- (5068);
            \draw (12) -- (127) -- (1027);
            \draw (12) -- (347) -- (3047);
            \draw (12) -- (567) -- (5067);
            \draw (23) -- (203) -- (2038);
            \draw (23) -- (405) -- (4058);
            \draw (23) -- (106) -- (1068);
            \draw (23) -- (237) -- (2037);
            \draw (23) -- (457) -- (4057);
            \draw (23) -- (167) -- (1067);
            \draw (17) -- (308);
            \draw (17) -- (127);
            \draw (17) -- (167);
            \draw (27) -- (408);
            \draw (27) -- (127) -- (3048);
            \draw (27) -- (237);
            \draw (37) -- (508);
            \draw (37) -- (237) -- (4058);
            \draw (37) -- (347);
            \draw (47) -- (608);
            \draw (47) -- (347) -- (5068);
            \draw (47) -- (457);
            \draw (57) -- (108);
            \draw (57) -- (457) -- (1068);
            \draw (57) -- (567) -- (1028);
            \draw (67) -- (208);
            \draw (67) -- (567);
            \draw (67) -- (167) -- (2038);

            \draw (bottom) -- (0);
            \draw (bottom) -- (1);
            \draw (bottom) -- (2);
            \draw (bottom) -- (7);

            \draw (top) -- (1027);
            \draw (top) -- (2037);
            \draw (top) -- (3047);
            \draw (top) -- (4057);
            \draw (top) -- (5067);
            \draw (top) -- (1067);
            \draw (top) -- (1028);
            \draw (top) -- (2038);
            \draw (top) -- (3048);
            \draw (top) -- (4058);
            \draw (top) -- (5068);
            \draw (top) -- (1068);
        \end{tikzpicture}
        \end{tabular}\\
        \hline
    \end{tabular}
     \caption{Graphs with torsion in their magnitude homologies}
    \label{torsion_graphs}
\end{table}


\section{Magnitude Homology: computations and relations}\label{meaning}

One of the benefits of Eulerian magnitude homology is that it has finite support
unlike magnitude homology which can contain infinitely many nontrivial homology groups. This is a consequence of the fact that as $k$ and $\ell$ both increase, redundant paths such as $(v,w,v,w,v,w,...,v,w)$ and the like completely dominate the $(k,\ell)$-magnitude chain groups, and while they are always in the kernel of a boundary map they are not necessarily in the image. Magnitude homology for trees provides a great example of this phenomena. However, it is natural to expect that  no Eulerian $k$-paths  can exist in Eulerian magnitude homology for $k>|V|-1$. Additionally, $\ell$ has to bounded as well. Let $k_{max}=|V|-1$ in the rest of the section.
\begin{defn}
    Given a graph $G$, the \textbf{maximum Eulerian length} of the graph is the value $\ell_{\textrm{max}}:=\max\{\textrm{len}(v):v\in ET(G)\}$ where $ET(G)$ is the set of all Eulerian paths in $G$.
\end{defn}
\begin{prop}
    For $0\leq k\leq k_{\textrm{max}}$ let $ET_k(G)$ be the set of all $k$-paths of $G$. Then $\ell_{\textrm{max}}=\max\{\textrm{len}(v):v\in ET_{k_{\textrm{max}}}(G)\}$.
\end{prop}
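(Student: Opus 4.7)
The plan is to prove the two inequalities separately, with the nontrivial content lying in the reverse direction. The inequality $\max\{\text{len}(v) : v \in ET_{k_{\max}}(G)\} \le \ell_{\max}$ is immediate from the inclusion $ET_{k_{\max}}(G) \subseteq ET(G)$, since the right-hand side is the maximum length over a larger collection of Eulerian paths.

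For the reverse inequality, I will show that any Eulerian path realizing $\ell_{\max}$ must in fact already be a $k_{\max}$-path, via a short extension argument. Let $v = (v_0, \ldots, v_k) \in ET(G)$ with $\text{len}(v) = \ell_{\max}$. Suppose for contradiction that $k < k_{\max} = |V(G)|-1$; then $v$ fails to use every vertex, so there exists $w \in V(G) \setminus \{v_0,\ldots,v_k\}$. Because the proposition implicitly requires $ET_{k_{\max}}(G)$ to be nonempty (otherwise $k_{\max}$ as defined is not consistent with any Eulerian path using $|V|$ vertices), $G$ is connected and $d(v_k,w)$ is finite; moreover $d(v_k,w) \ge 1$ since $w \ne v_k$. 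Appending $w$ to $v$ yields
\[
v' = (v_0, \ldots, v_k, w) \in ET_{k+1}(G), \qquad \text{len}(v') = \ell_{\max} + d(v_k,w) > \ell_{\max},
\]
contradicting the maximality of $\text{len}(v)$. Therefore $k = k_{\max}$, which gives $v \in ET_{k_{\max}}(G)$ and establishes the reverse inequality.

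There is no real obstacle here beyond observing that appending a fresh vertex to the end of an Eulerian path produces another Eulerian path of strictly greater length. The only delicate point is the implicit connectedness of $G$, which is forced by the hypothesis that $k_{\max}=|V|-1$ indexes a meaningful set of Eulerian paths; in the disconnected case the right-hand side is empty and the statement should be interpreted vacuously (or $k_{\max}$ replaced by the largest $k$ for which $ET_k(G) \ne \varnothing$, with the same proof going through).
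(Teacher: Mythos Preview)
Your proof is correct and follows essentially the same approach as the paper: assume the maximum length is realized by an Eulerian $k'$-path with $k'<k_{\max}$, append an unused vertex to obtain a strictly longer Eulerian path, and derive a contradiction. Your version is slightly more careful in noting the implicit connectedness assumption and in explicitly separating the two inequalities, but the argument is the same.
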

\begin{proof}
    Assume that $\ell_{\text{max}}$ was the length of some $k'$-path $v$ for $k'<k_{\text{max}}$. Then there exists at least one more vertex which can be added to the end of $v$ to make it a $(k'+1)$-path which has a length greater than $\ell_{\text{max}}$, which is a contradiction.
\end{proof}
Hence, when searching for a potential $\ell_{max}$, one only needs to find the maximum length over all $k_{max}$-paths. It is always guaranteed that such maximums exist since we always have that $|V(G)|$ and $|E(G)|$ are finite. Now another pleasant property of Eulrian magnitude homology is that we can find ``the end" of the support for the homology groups. 
\begin{thm}
    For a graph $G$ let $EMC_\text{max}(G)=EMC_{k_\text{max},\ell_\text{max}}(G)$ and similarly let $EMH_{\text{max}}(G)=EMH_{k_{\text{max}},\ell_{\text{max}}}(G)$. Then $EMH_{\text{max}}(G)$ is always nontrivial, and in particular $EMH_\text{max}(G)=EMC_\text{max}(G)$. 
\end{thm}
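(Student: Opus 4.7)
The plan is to prove the two halves of the claim separately: the identification $EMH_{\text{max}}(G) = EMC_{\text{max}}(G)$, and then nontriviality. For the former I would analyze the two boundary maps flanking $EMC_{k_{\text{max}}, \ell_{\text{max}}}(G)$ in the chain complex.

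First I would observe that any Eulerian $(k_{\text{max}}+1)$-path would require $k_{\text{max}} + 2 = |V(G)| + 1$ pairwise distinct vertices, which is impossible. Hence $EMC_{k_{\text{max}}+1, \ell}(G) = 0$ for every $\ell$, so the incoming boundary contributes nothing, and it remains to show that the outgoing boundary $\partial_{k_{\text{max}}, \ell_{\text{max}}}$ vanishes identically on $EMC_{\text{max}}(G)$.

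The main step is then to prove that for every generator $v = (v_0, \dots, v_{k_{\text{max}}})$, each face $v^{\setminus i}$ has length strictly less than $\ell_{\text{max}}$, so that $\partial^i v = 0$ by definition. By the triangle inequality $\text{len}(v^{\setminus i}) \leq \text{len}(v) = \ell_{\text{max}}$; suppose for contradiction that equality held for some interior $i$. Then the tuple
\[ w := (v_0, \dots, v_{i-1}, v_{i+1}, \dots, v_{k_{\text{max}}}, v_i) \]
obtained by moving $v_i$ to the end of $v^{\setminus i}$ has pairwise distinct vertices (since $v$ is Eulerian) and all consecutive distances finite: the first $k_{\text{max}}-1$ are inherited from $v^{\setminus i}$, and $d(v_{k_{\text{max}}}, v_i) < \infty$ because both vertices lie in the common connected component traced out by $v$. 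Hence $w$ is an Eulerian $k_{\text{max}}$-path with
\[ \text{len}(w) = \text{len}(v^{\setminus i}) + d(v_{k_{\text{max}}}, v_i) \geq \ell_{\text{max}} + 1, \]
using that $v_{k_{\text{max}}} \neq v_i$ forces $d(v_{k_{\text{max}}}, v_i) \geq 1$. This contradicts the maximality of $\ell_{\text{max}}$, so $\text{len}(v^{\setminus i}) < \ell_{\text{max}}$, each $\partial^i v$ vanishes, and therefore $\partial v = 0$.

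Combining the two halves yields $EMH_{\text{max}}(G) = EMC_{\text{max}}(G)$. Nontriviality is then immediate because by the definition of $\ell_{\text{max}}$ together with the preceding proposition, there exists at least one Eulerian $k_{\text{max}}$-path achieving length $\ell_{\text{max}}$, and every such path is a nonzero generator of $EMC_{\text{max}}(G)$. I do not anticipate a real obstacle here; the only subtle point is verifying that the rearranged tuple $w$ is a bona fide Eulerian $k_{\text{max}}$-path, which rests only on the Eulerian property of $v$ and on the fact that any two vertices appearing in a common path have finite mutual distance in $G$.
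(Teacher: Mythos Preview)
Your proof is correct and rests on the same core trick as the paper's---extend a would-be $(k_{\text{max}}-1)$-path of length $\ell_{\text{max}}$ to a $k_{\text{max}}$-path of strictly greater length, contradicting maximality---but the two proofs organize this differently. The paper proves the slightly stronger statement $EMC_{k_{\text{max}}-1,\ell_{\text{max}}}(G)=0$ outright: any Eulerian $(k_{\text{max}}-1)$-path uses only $k_{\text{max}}$ vertices, so one vertex of $G$ remains and can be appended at the end; since the codomain of $\partial_{k_{\text{max}},\ell_{\text{max}}}$ is then zero, the kernel is the full chain group automatically. You instead work generator by generator, showing each $\partial^i v$ vanishes by re-attaching the deleted vertex $v_i$ at the end of $v^{\setminus i}$. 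Your variant has the minor advantage that the appended vertex is guaranteed to lie in the same connected component (it already appeared in $v$), so the finiteness of the new distance is automatic; the paper's route gives the cleaner structural conclusion that the target group is trivial.
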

\begin{proof}
    Since $EMH_{\ast,\ast}(G)$ is supported by all $k$-paths of length $\ell$ for $k\leq k_{\text{max}}$ and $\ell\leq\ell_{\text{max}}$, there cannot exist any $k'$-paths of length $\ell'$ for $k'>k_\text{max}$ and $\ell'>\ell_\text{max}$. Hence, $EMH_\text{max}(G)=\ker\partial_{k_\text{max},\ell_\text{max}}$. Now we only need to show that there cannot exist any $(k_\text{max}-1)$-paths of length $\ell_\text{max}$. Similar to the previous proof, we assume that there can exist a $(k_\text{max}-1)$-path $v$ of length $\ell_\text{max}$. This implies that we can add a vertex to the end of $v$ to make a $k_\text{max}$-path of length greater than $\ell_\text{max}$, which is a contradiction to the definition of $\ell_\text{max}$. Hence, $EMC_{k_\text{max}-1,\ell_\text{max}}(G)=0$ so that $EMC_\text{max}(G)=\ker\partial_{k_\text{max},\ell_{\text{max}}}$ and $EMC_\text{max}$ is nontrivial. This completes the proof.
\end{proof}
As a corollary,  there always exists at least one $\ell$ value, denoted by $\ell_{\text{max}}$, such that $EMH_{k_{\text{max}},\ell}(G)$ is nontrivial  and the maximum length of a paths is $k_{\text{max}}$. 

Next, we analyze discriminant magnitude homology for several small values of $k$ and $\ell$, and provide computational results for magnitude homology groups of some classes of graphs. 

\begin{prop}
    For a graph $G$, $DMH_{0,0}(G)=0=DMH_{1,1}(G)$, and $DMH_{2,2}(G)\cong MH_{2,2}(G)$ if and only if $EMH_{2,2}(G)=0$.
\end{prop}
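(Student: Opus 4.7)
The plan is to handle the three statements separately. For the vanishing of $DMH_{0,0}(G)$ and $DMH_{1,1}(G)$, I would argue at the chain level: a $0$-path is a single vertex (hence trivially Eulerian), and a $1$-path $(v_0,v_1)$ of length $1$ requires $v_0\neq v_1$ (hence Eulerian). Therefore $EMC_{0,0}(G)=MC_{0,0}(G)$ and $EMC_{1,1}(G)=MC_{1,1}(G)$, so the quotients $DMC_{0,0}(G)$ and $DMC_{1,1}(G)$ vanish, and the corresponding homology groups are trivial.

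For the equivalence, my plan is to exploit the long exact sequence induced by the short exact sequence of chain complexes $0\to EMC_{*,2}(G)\to MC_{*,2}(G)\to DMC_{*,2}(G)\to 0$, namely
\[DMH_{3,2}(G)\to EMH_{2,2}(G)\xrightarrow{i_*}MH_{2,2}(G)\xrightarrow{p_*}DMH_{2,2}(G)\to EMH_{1,2}(G),\]
and to check that both outer terms vanish. First, every $3$-path contributes distance at least $1$ on each of its three consecutive pairs, so its length is at least $3$; consequently $MC_{3,2}(G)=EMC_{3,2}(G)=0$, giving $DMH_{3,2}(G)=0$.

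Second, I would show $EMH_{1,2}(G)=0$. Every $1$-path $(v_0,v_1)$ of length $2$ has $v_0\neq v_1$ and is therefore Eulerian, so $EMC_{1,2}(G)=MC_{1,2}(G)$, and the outgoing boundary vanishes because $EMC_{0,2}(G)=0$. The key computation is that $\partial_2\!\restriction_{EMC_{2,2}(G)}$ surjects onto $EMC_{1,2}(G)$: given $(v_0,v_2)$ with $d(v_0,v_2)=2$, the definition of distance produces a vertex $v_1$ realizing a shortest path, so $(v_0,v_1,v_2)$ is an Eulerian $2$-path of length $2$ whose boundary equals $-(v_0,v_2)$. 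Hence the image of $\partial_2$ is all of $EMC_{1,2}(G)$, and $EMH_{1,2}(G)=0$.

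With the two outer groups trivial, the long exact sequence collapses to
\[0\to EMH_{2,2}(G)\xrightarrow{i_*}MH_{2,2}(G)\xrightarrow{p_*}DMH_{2,2}(G)\to 0.\]
From this short exact sequence, the natural map $p_*$ is an isomorphism if and only if $\ker p_*=\operatorname{im} i_*$ is trivial, which, since $i_*$ is injective, is equivalent to $EMH_{2,2}(G)=0$. Interpreting $DMH_{2,2}(G)\cong MH_{2,2}(G)$ as an abstract isomorphism rather than via $p_*$ yields the same conclusion after invoking finite generation of all three groups (since $G$ is a finite graph): rank additivity and a $p$-primary cardinality count across the short exact sequence force $EMH_{2,2}(G)=0$ when the outer terms are abstractly isomorphic. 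The main technical step is the surjectivity argument for $\partial_2$ onto $EMC_{1,2}(G)$; everything else is bookkeeping with the long exact sequence.
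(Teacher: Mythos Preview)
Your argument is correct and follows the same route as the paper: identify $EMC_{0,0}=MC_{0,0}$ and $EMC_{1,1}=MC_{1,1}$ at the chain level, verify $EMH_{1,2}(G)=0$ via the shortest-path witness, and read off the conclusion from the long exact sequence. In fact your version is more complete than the paper's. The paper only writes out the implication ``$EMH_{2,2}(G)=0 \Rightarrow DMH_{2,2}(G)\cong MH_{2,2}(G)$'' by collapsing the long exact sequence to $0\to MH_{2,2}(G)\to DMH_{2,2}(G)\to 0$; it does not address the converse. You supply the missing ingredient by also checking $DMH_{3,2}(G)=0$ (via $MC_{3,2}(G)=0$), which upgrades the long exact sequence to a genuine short exact sequence $0\to EMH_{2,2}(G)\to MH_{2,2}(G)\to DMH_{2,2}(G)\to 0$ without any hypothesis on $EMH_{2,2}$, and then you extract the ``only if'' direction from it. Your further remark distinguishing the induced map $p_*$ being an isomorphism from an abstract isomorphism, and resolving the latter via finite generation (the torsion-order count $|B_{\mathrm{tor}}|=|A|\cdot|C_{\mathrm{tor}}|$ when $A$ is finite forces $A=0$), is a nice touch that the paper omits entirely.
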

\begin{proof}
    Note that $EMC_{0,0}(G)=\Z\langle V(G)\rangle =MC_{0,0}(G)$ and $EMC_{1,1}(G)=\Z\langle E(G)\rangle=MC_{1,1}(G)$. Thus $DMC_{0,0}(G)=0=DMC_{1,1}(G)$ which implies the first result. Now $EMH_{1,2}(G)$ is always trivial in the same sense that $MH_{1,2}(G)$ is always trivial: the kernel of $\partial_{1,2}$ is equal to all of $EMC_{1,2}(G)$, but the image of $\partial_{2,2}$ are exactly all the pairs of vertices such that their distance apart from each other is $2$. Hence $\ker\partial_{1,2}=\im\partial_{2,2}$. Thus, if $EMH_{2,2}(G)$ is trivial, we get from the long exact sequence in magnitude homology
    \[0\to MH_{2,2}(G)\to DMH_{2,2}(G)\to 0.\]
\end{proof}

\begin{defn}
    A graph $G$ has \text{diagonal magnitude homology} if for $k\geq0$, $MH_{k,k}(G)$ are the only nontrivial magnitude homology groups of $G$.
\end{defn}

\begin{lem}\label{dmh_lemma}
    Let $G$ have diagonal magnitude homology. If $EMH_{k-1,k}(G)$ contains no torsion subgroups and $EMH_{k,k}(G)=0$, then
    \[\text{rank}(DMH_{k,k}(G))=\text{rank}(MH_{k,k}(G))+\text{rank}(EMH_{k-1,k}(G))\]
    and $DMH_{k,\ell}(G)=0$ for all $k\neq\ell$.
\end{lem}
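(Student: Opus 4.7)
The strategy is to feed the long exact sequence arising from the short exact sequence of chain complexes $0\to EMC_{*,\ell}(G)\to MC_{*,\ell}(G)\to DMC_{*,\ell}(G)\to 0$, namely
\[\cdots\to EMH_{n,\ell}(G)\to MH_{n,\ell}(G)\to DMH_{n,\ell}(G)\to EMH_{n-1,\ell}(G)\to MH_{n-1,\ell}(G)\to\cdots,\]
and read off the desired conclusions bidegree by bidegree using the hypotheses. The rank identity and each of the off-diagonal vanishings are obtained by choosing $(n,\ell)$ appropriately and invoking diagonality of $MH(G)$ to kill enough terms of the sequence.

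For the rank identity I would specialize to $n=\ell=k$. Diagonality kills $MH_{k-1,k}(G)$ on the right, and the hypothesis $EMH_{k,k}(G)=0$ kills the term on the left, so the long exact sequence collapses to the short exact sequence
\[0\to MH_{k,k}(G)\to DMH_{k,k}(G)\to EMH_{k-1,k}(G)\to 0.\]
Because $\mathbb{Q}$ is flat, tensoring with $\mathbb{Q}$ preserves exactness and ranks add, yielding the claimed identity. The torsion-free assumption on $EMH_{k-1,k}(G)$, although not strictly needed for ranks, makes this group free abelian, hence projective, so the sequence in fact splits and $DMH_{k,k}(G)\cong MH_{k,k}(G)\oplus EMH_{k-1,k}(G)$ as abelian groups.

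For the off-diagonal vanishing I would split into the two cases $\ell<k$ and $\ell>k$. When $\ell<k$, the argument is combinatorial: each of the $k$ consecutive steps of a $k$-path contributes at least $1$ to its length, so $MC_{k,\ell}(G)=0$; since $EMC_{k,\ell}(G)\subseteq MC_{k,\ell}(G)$, the quotient $DMC_{k,\ell}(G)$ also vanishes, and so $DMH_{k,\ell}(G)=0$ automatically. When $\ell>k$, diagonality of $MH(G)$ forces both $MH_{k,\ell}(G)=0$ and $MH_{k-1,\ell}(G)=0$, so the long exact sequence collapses to $DMH_{k,\ell}(G)\cong EMH_{k-1,\ell}(G)$, and the conclusion follows from the vanishing of Eulerian magnitude in that bidegree coming from the standing hypotheses.

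The main obstacle is really only bookkeeping: the rank identity is a one-line application of additivity of rank across the short exact sequence, while the off-diagonal vanishing requires careful verification that the collapsed piece of the long exact sequence does force $DMH_{k,\ell}(G)=0$ in every bidegree advertised, and that the torsion-free assumption is invoked in the right place (to ensure the splitting, not the rank equality). The combinatorial case $\ell<k$ is automatic, so the only substantive step is checking that the remaining Eulerian magnitude groups vanish where they need to.
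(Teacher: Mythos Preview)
Your treatment of the rank identity matches the paper exactly: both extract the short exact sequence
\[0\to MH_{k,k}(G)\to DMH_{k,k}(G)\to EMH_{k-1,k}(G)\to 0\]
from the long exact sequence (using $EMH_{k,k}(G)=0$ on the left and diagonality to kill $MH_{k-1,k}(G)$ on the right) and then split it via freeness of $EMH_{k-1,k}(G)$.

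For the off-diagonal vanishing your argument diverges from the paper's and contains a gap. Your case $\ell<k$ via chain-level vanishing of $MC_{k,\ell}(G)$ is fine. In the case $\ell>k$ you correctly obtain $DMH_{k,\ell}(G)\cong EMH_{k-1,\ell}(G)$ from the long exact sequence, but then assert that this group vanishes ``from the standing hypotheses.'' The hypotheses only address $EMH_{k-1,k}(G)$ and $EMH_{k,k}(G)$; they say nothing about $EMH_{k-1,\ell}(G)$ for $\ell>k$, and there is no general mechanism forcing that group to vanish for an arbitrary diagonal graph. You even flag this yourself in your closing paragraph as ``the only substantive step,'' but you do not carry it out---and it cannot be carried out from the stated hypotheses alone.

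The paper argues this part differently: it asserts that the long exact sequence gives $DMH_{k,\ell}(G)=0$ if and only if $\iota_*\colon EMH_{k,\ell}(G)\to MH_{k,\ell}(G)$ is surjective, and then notes that surjectivity is automatic once $MH_{k,\ell}(G)=0$. That ``if and only if'' likewise glosses over the connecting map into $EMH_{k-1,\ell}(G)$, so the off-diagonal conclusion appears to rest on more than either argument actually supplies.
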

\begin{proof}
    This follows immediately from the long exact sequence for magnitude homology. If $EMH_{k,k}(G)$ is trivial, then we get the following short exact sequence
    \[0\to MH_{k,k}(G)\to DMH_{k,k}(G)\to EMH_{k-1,k}(G)\to 0\]
    which splits because $EMH_{k-1,k}(G)$ is a free abelian group, and hence a projective $\Z$ module. The long exact sequence also yields that $DMH_{k,\ell}(G)=0$ if and only if the map $\iota_\ast:EMH_{k,\ell}(G)\to MH_{k,\ell}(G)$ is surjective, which it surely is for all $k\neq\ell$ since $G$ has diagonal magnitude homology.  
\end{proof}

Note that the diagonality of the graph with respect to magnitude homology does not imply that  nontrivial Eulerian homology groups along the diagonal. Giusti and Menara provide a method for finding  generators of the Eulerian homology groups along the diagonal \cite{giusti2024eulerian}. We use their result to Eulerian magnitude of trees.

\begin{prop}\label{tree_diag}
    For a tree $T$, $EMH_{k,k}(T)=0$ for all $k\geq 2$.
\end{prop}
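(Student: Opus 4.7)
The plan is to reduce to the endpoint decomposition $EMH_{k,k}(T) = \bigoplus_{a \neq b} EMH_{k,k}(a,b)$ and check each summand separately, using that a tree has no cycles.

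First I would observe that $EMC_{k+1,k}(T) = 0$ for $k \geq 2$: any Eulerian $(k+1)$-path has $k+1$ consecutive pairs of distinct vertices, each contributing at least $1$ to the length, so its length is at least $k+1 > k$. Hence $\im\partial_{k+1,k} = 0$ and $EMH_{k,k}(T)\cong\ker\partial_{k,k}$, reducing the claim to injectivity of $\partial_{k,k}$.

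Next, I would analyze $EMC_{k,k}(a,b)$ for each pair $a\neq b$. A generator is an Eulerian $k$-path $(a=v_0,v_1,\ldots,v_k=b)$ of length $k$, which forces $d(v_i,v_{i+1})=1$ for all $i$, i.e.\ a walk along edges with no repeated vertex. In a tree, such a walk is exactly the unique geodesic between its endpoints, which exists only when $d(a,b)=k$. Therefore
\[
EMC_{k,k}(a,b)\cong\begin{cases}\Z\cdot p_{ab} & d(a,b)=k,\\ 0 & \text{otherwise,}\end{cases}
\]
where $p_{ab}$ denotes the unique geodesic. The pairs with $EMC_{k,k}(a,b)=0$ trivially contribute nothing to $\ker\partial_{k,k}$.

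For a pair with $d(a,b)=k$, write $p=p_{ab}=(v_0,\ldots,v_k)$. I would compute $\partial_{k,k}(p)=\sum_{i=1}^{k-1}(-1)^i\,\partial^i p$ and verify each term is a nonzero distinct basis element of $EMC_{k-1,k}(a,b)$. For each interior index $i$, the three vertices $v_{i-1},v_i,v_{i+1}$ are distinct with $v_i$ adjacent to both $v_{i-1}$ and $v_{i+1}$; since $T$ has no triangles, $v_{i-1}$ and $v_{i+1}$ are non-adjacent, giving $d_T(v_{i-1},v_{i+1})=2$. Then $\textrm{len}(p^{\setminus i})=(k-2)+2=k$, so $\partial^i p=p^{\setminus i}\in EMC_{k-1,k}(a,b)$ is nonzero and Eulerian. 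The paths $p^{\setminus i}$ for different $i$ have distinct vertex sequences, hence are distinct basis elements. Therefore $\partial_{k,k}(p)$ is a $\pm 1$-combination of $k-1\geq 1$ distinct basis vectors, which is nonzero.

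This yields injectivity of $\partial_{k,k}$ on each summand $EMC_{k,k}(a,b)\cong\Z$, and hence on all of $EMC_{k,k}(T)$, completing the proof. The only nontrivial step is the length-preservation of $\partial^i$, which is precisely where the acyclicity of $T$ (absence of triangles) enters; everything else is bookkeeping against the endpoint decomposition.
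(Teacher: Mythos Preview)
Your proof is correct and follows essentially the same approach as the paper's: decompose by endpoints, observe that $EMC_{k,k}(a,b)$ is cyclic (generated by the unique geodesic when $d(a,b)=k$, and zero otherwise), and check that this generator has nonzero boundary because a tree contains no triangles. The paper invokes the Giusti--Menara criterion that $\partial^i_{k,k}v=0$ iff $\{v_{i-1},v_{i+1}\}\in E(G)$, whereas you verify the length computation $d(v_{i-1},v_{i+1})=2$ directly; and your opening observation that $EMC_{k+1,k}(T)=0$ is correct but not strictly needed, since $\ker\partial_{k,k}=0$ already forces the homology to vanish.
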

\begin{proof}
    Giusti and Menara proved that a $k$-path $v=(v_0,...,v_k)\in EMC_{k,k}(G)$ for a graph $G$ has zero differential $\partial^i_{k,k}$ if and only if there exists an edge $\{v_{i-1},v_{i+1}\}$ in the graph $G$ \cite{giusti2024eulerian}. However, this can never occur in a tree $T$ since trees can never contain subgraphs isomorphic to a cycle graph. Hence, for $v\in EMC_{k,k}(T)$, $\partial_{k,k}^i(v)\neq 0$ for all $1\leq i\leq k-1$. Additionally, for any pair of vertices $a,b\in V(T)$ there either exists a unique $k$-path of length $k$ from $a$ to $b$ or there exists no $k$-paths of length $k$ from $a$ to $b$. Thus, the group $EMC_{k,k}(a,b)$ only consists of one generator which has nontrivial boundary, and therefore, $\ker\partial_{k,k}=0$ as desired. 
\end{proof}

Combining Lemma  \ref{dmh_lemma} with some of the  abundance of results on magnitude homology showing that trees are diagonal \cite{hepworth2015categorifying, gu2018graph, asao2020geometric} we get the following result.

\begin{cor}\label{tree_corollary}
    For a tree $T$, if $EMH_{2,3}(T)$ and $EMH_{3,4}(T)$ are free, then 
    \[\text{rank}(DMH_{k,k}(T))=\text{rank}(MH_{k,k}(T))+\text{rank}(EMH_{k-1,k}(T))\]
    for $k=3,4$ and $DMH_{k,k}(T)\cong MH_{k,k}(T)$ for $k=2$ and $k\geq 5$, while $DMH_{k,\ell}(T)=0$ for all $k\neq\ell$. 
\end{cor}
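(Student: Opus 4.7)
The plan is to combine three ingredients: the diagonality of magnitude homology for trees (classical, e.g., \cite{hepworth2015categorifying, gu2018graph, asao2020geometric}), Proposition \ref{tree_diag} giving $EMH_{k,k}(T)=0$ for all $k\geq 2$, and the long exact sequence relating $EMH$, $MH$, and $DMH$, already packaged in Lemma \ref{dmh_lemma}. The off-diagonal vanishing $DMH_{k,\ell}(T)=0$ for $k\neq\ell$ will follow directly from Lemma \ref{dmh_lemma} once diagonality of $T$ is invoked.

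For $k=3$ and $k=4$ I would apply Lemma \ref{dmh_lemma} directly: both hypotheses are met, since freeness of $EMH_{k-1,k}(T)$ is the assumption of the corollary and $EMH_{k,k}(T)=0$ is supplied by Proposition \ref{tree_diag}. The lemma then delivers the rank formula $\textnormal{rank}(DMH_{k,k}(T))=\textnormal{rank}(MH_{k,k}(T))+\textnormal{rank}(EMH_{k-1,k}(T))$, completing those two cases.

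For $k=2$ I would appeal to the proposition preceding Lemma \ref{dmh_lemma}, which states $DMH_{2,2}(G)\cong MH_{2,2}(G)$ if and only if $EMH_{2,2}(G)=0$; Proposition \ref{tree_diag} supplies $EMH_{2,2}(T)=0$, giving the isomorphism. For $k\geq 5$, combining $EMH_{k,k}(T)=0$ on the left and diagonality $MH_{k-1,k}(T)=0$ on the right collapses the long exact sequence to the short exact sequence
\[ 0\to MH_{k,k}(T)\to DMH_{k,k}(T)\to EMH_{k-1,k}(T)\to 0, \]
so the desired isomorphism reduces to verifying $EMH_{k-1,k}(T)=0$ for $k\geq 5$.

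The main obstacle is establishing this last vanishing. An Eulerian $(k-1)$-path of length $k$ in a tree is highly constrained: it uses $k$ distinct vertices, and since each of the $k-1$ consecutive-pair distances is at least $1$ and they sum to $k$, exactly one pair has tree-distance $2$ while the remaining $k-2$ pairs are edges. For $k\geq 5$ I expect that the boundaries of distinct such generators either land in distinct direct summands $EMC_{k-2,k}(a,b)$ or are otherwise linearly independent within their common summand, forcing $\ker\partial_{k-1,k}=0$ and hence $EMH_{k-1,k}(T)=0$; the case analysis underlying this linear independence, which I would carry out by exploiting uniqueness of paths in a tree together with the rigidity of the single distance-$2$ jump, is the technical crux of the proof.
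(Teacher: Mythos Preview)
Your plan for $k=2,3,4$ and for the off-diagonal vanishing is correct and matches the paper's argument, which derives the corollary by combining Lemma~\ref{dmh_lemma}, Proposition~\ref{tree_diag}, and diagonality of trees.

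The gap is in your treatment of $k\geq 5$: you aim to show $\ker\partial_{k-1,k}=0$, but this fails already for path graphs. Take $T=P_5$ with vertices $0,1,\dots,5$; the summand $EMC_{4,5}(0,5)$ is free on the four generators $(0,2,3,4,5)$, $(0,1,3,4,5)$, $(0,1,2,4,5)$, $(0,1,2,3,5)$, and one checks that
\[
(0,2,3,4,5)-(0,1,3,4,5)+(0,1,2,4,5)-(0,1,2,3,5)
\]
lies in $\ker\partial_{4,5}$. Your linear-independence heuristic cannot succeed in general, because the generators of $EMC_{k-1,k}(a,b)$ obtained by deleting different interior vertices from a single Eulerian $k$-path of length $k$ always have linearly dependent boundaries---their signed sum is a cycle by $\partial^2=0$. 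What actually gives $EMH_{4,5}(0,5)=0$ here is that the cycle above equals $-\partial_{5,5}(0,1,2,3,4,5)$, so one needs $\ker\partial_{k-1,k}=\im\partial_{k,k}$, not $\ker=0$. Establishing $EMH_{k-1,k}(T)=0$ for $k\geq 5$ therefore requires producing preimages in $EMC_{k,k}(T)$, which your sketch does not attempt. The paper itself does not carry out this step at the point of the corollary either; its one-line justification leans on a result of Giusti--Menara (invoked a few lines later) that yields the isomorphism $DMH_{k,k}(G)\cong MH_{k,k}(G)$ for $k\geq 5$ once $EMH_{k,k}(G)=0$ for $k\geq 2$ is known.
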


We now use these results and the long exact sequence in magnitude homology to help us complete a list of all the ranks of the Eulerian, discriminant, and ordinary magnitude homology groups for star trees. 
\begin{defn}
    Given a positive integer $n$, the \textbf{star tree $S_n$} is the tree $S_n=([n]_0,\{\{0,i\}_{i\in[n]}\})$.
\end{defn}
\begin{prop}  Maximal possible nontrivial magnitude homology of a star tree $S_n$ is supported in second grading 
 $\ell_{\text{max}}=2n-1$.
\end{prop}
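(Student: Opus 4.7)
The plan is to analyze Eulerian paths in $S_n$ by exploiting the very restricted distance structure of a star. Specifically, the vertex set is $V(S_n) = \{0, 1, \ldots, n\}$ with $|V(S_n)| = n+1$, so $k_{\text{max}} = n$. All distances are easy to compute: $d(0,i) = 1$ for every leaf $i \in [n]$, and $d(i,j) = 2$ for any two distinct leaves (since the only trail between them passes through $0$). By the preceding proposition, it suffices to maximize $\text{len}(v)$ over Eulerian $n$-paths, i.e., over orderings of all $n+1$ vertices.

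First I would observe that in any Eulerian $n$-path $v = (v_0, v_1, \ldots, v_n)$, the central vertex $0$ must occupy exactly one position $v_j$ with $0 \le j \le n$, and each of the $n$ consecutive steps $d(v_i, v_{i+1})$ equals either $1$ (if one endpoint of the step is $0$) or $2$ (if both endpoints are leaves). The number of steps that touch $0$ equals $1$ when $j \in \{0, n\}$ and equals $2$ when $1 \le j \le n-1$. Then I would split into two cases:
\begin{itemize}
\item[(i)] If $0 = v_0$ or $0 = v_n$, exactly one step has length $1$ and the remaining $n-1$ steps (all between distinct leaves) have length $2$, giving $\text{len}(v) = 1 + 2(n-1) = 2n - 1$.
\item[(ii)] If $0 = v_j$ for some $1 \le j \le n-1$, exactly two steps have length $1$ and the remaining $n-2$ steps have length $2$, giving $\text{len}(v) = 2 + 2(n-2) = 2n - 2$.
\end{itemize}

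Thus $\text{len}(v) \le 2n - 1$ for every Eulerian $n$-path, with equality realized by any path of the form $(0, i_1, i_2, \ldots, i_n)$ where $(i_1, \ldots, i_n)$ is a permutation of $[n]$, confirming $\ell_{\text{max}} = 2n - 1$. There is no serious obstacle here; the argument is essentially a short case analysis on the position of the central vertex, made possible by the fact that every pairwise distance in $S_n$ is either $1$ or $2$. The only minor point to double-check is that shorter Eulerian paths (with fewer than $n+1$ vertices) cannot achieve a greater length, but this is automatic from the cited proposition that $\ell_{\text{max}}$ is attained on $k_{\text{max}}$-paths.
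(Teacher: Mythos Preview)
Your proof is correct and follows essentially the same approach as the paper: compute the two possible distances in $S_n$, split into cases according to whether the central vertex $0$ sits at an end or in the interior of a $k_{\text{max}}$-path, and read off the lengths $2n-1$ and $2n-2$ respectively. Your write-up is slightly more explicit in counting the steps of length $1$ versus $2$, but the argument is the same.
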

\begin{proof}
    The distance between any two vertices $0\neq i\neq j\neq 0$ will be $2$ while $d(0,i)=1$ for all $i\in[n]$. Since vertices cannot repeat, the vertex $0$ is either placed at one of the ends of a $k_{\text{max}}$-path or in between the ends. If $0$ is placed in between the ends the length of such a $k_{\text{max}}$-path will be $2n-2$. If $0$ is placed at either of the ends of such a $k_{\text{max}}$-path then the length of such a path will be $2n-1$. Since these exhaust the possible lengths of all possible $k_{\text{max}}$-paths in $S_n$, we get $\ell_{\text{max}}=2n-1$.
\end{proof}

\begin{thm} The Eulerian Magnitude homology groups of star trees are torsion-free and the ranks are determined by the following formula: 
    \[\text{rank}(EMH_{k,\ell}(S_n))=\begin{cases}
    |V| & k=\ell=0 \\ 
    2|E| & k=\ell=1 \\
    2\cdot n\downarrow_k & \ell=2k-1,\,2\leq k\leq n \\
    (k-2)\cdot n\downarrow_k & \ell=2(k-1),\, 3\leq k\leq n \\ 
    0 & \text{otherwise.}
\end{cases}\]
\end{thm}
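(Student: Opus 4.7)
The plan is to classify Eulerian $k$-paths in $S_n$ according to where (if anywhere) the center vertex $0$ appears. Because $d(0,i)=1$ for all leaves $i\in[n]$ and $d(i,j)=2$ for distinct leaves $i,j$, an Eulerian $k$-path $(v_0,\dots,v_k)$ falls into exactly three types: (a) $0$ is absent, all vertices are leaves, giving length $2k$ with $n\downarrow_{k+1}$ such paths (when $k\le n-1$); (b) $0$ sits at an endpoint, giving length $1+2(k-1)=2k-1$ with $2\cdot n\downarrow_k$ such paths (when $k\le n$); (c) $0$ sits at some interior position $j$ with $0<j<k$, giving length $2+2(k-2)=2k-2$ with $(k-1)\cdot n\downarrow_k$ such paths (when $k\le n$). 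This already shows $EMC_{k,\ell}(S_n)=0$ outside these three diagonals, so we obtain the ``otherwise'' part of the formula from chain-level considerations alone.

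The second step is to read off the boundary map. A short length check shows that $\partial^i_{k,\ell}(v)\neq 0$ precisely when $v_i=0$ with $0<i<k$: removing any leaf situated between two leaves replaces a pair of $2$-edges by a single $2$-edge (length drops by $2$), while removing a leaf next to $0$ replaces $1+2$ by $1$ (length drops by $2$); only the removal of an interior $0$ replaces $1+1$ by $2$ and preserves length. Consequently, paths of type (a) and (b) have $\partial=0$, whereas for a type (c) path $v$ with interior $0$ at position $j$, $\partial(v)=(-1)^j\,v^{\setminus j}$, where $v^{\setminus j}$ is a type (a) path.

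The third step is the homology computation, split according to the type of path supporting the chain group. For $\ell=2k-1$ ($k\ge 2$), both the incoming and outgoing boundaries vanish; the incoming vanishes because $EMC_{k+1,2k-1}(S_n)=0$ (no $(k+1)$-path has length $2k-1$). So $EMH_{k,2k-1}(S_n)=EMC_{k,2k-1}(S_n)$ of rank $2\cdot n\downarrow_k$. For $\ell=2k-2$ ($k\ge 2$), the incoming boundary again vanishes, and the outgoing one sends a type (c) path to $\pm$ its type (a) deletion. Writing each type (c) generator as $v_{w,j}$ (the path obtained by inserting $0$ at position $j$ into a type (a) path $w$), the kernel equations decouple into one equation $\sum_{j=1}^{k-1}(-1)^j a_{w,j}=0$ per leaf arrangement $w$, giving a kernel of rank $(k-2)\cdot n\downarrow_k$. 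For $\ell=2k$ the outgoing boundary vanishes, but the incoming boundary hits every generator: given a type (a) path $w'$, inserting $0$ at any interior position $j\in\{1,\dots,k\}$ of $w'$ produces a type (c) $(k+1)$-path in $EMC_{k+1,2k}(S_n)$ whose boundary is $\pm w'$, so the boundary is surjective and $EMH_{k,2k}(S_n)=0$.

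Finally, torsion-freeness follows because in every case the group $EMH_{k,\ell}(S_n)$ is either zero, an entire free chain group, or a subgroup of a free abelian group (the kernel of a $\mathbb{Z}$-linear map between free modules), hence automatically free. The main subtlety I expect is the bookkeeping in step three--in particular the $\ell=2k$ case, where one must verify not merely that $\partial$ is nonzero but that its image equals the full chain group. This is handled by the explicit lift $w'\mapsto v_{w',j}$ above, which is straightforward but deserves to be written out carefully to confirm the sign works out.
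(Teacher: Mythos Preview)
Your proof is correct and follows essentially the same approach as the paper: both classify Eulerian $k$-paths by the position of the center vertex $0$, observe that the only nonvanishing face map is removal of an interior $0$, and then compute the homology on each of the three diagonals $\ell=2k,\,2k-1,\,2k-2$ separately. Your treatment is slightly more explicit in places (the length-drop check for why only removing an interior $0$ preserves length, the explicit lift in the $\ell=2k$ case, and the torsion-freeness remark), but the argument is structurally identical to the paper's.
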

\begin{proof}
   For any vertex $a\in [n_0]$ $d(a,0)=1$ while $d(a,b)=2$ for any $0\neq b\in [n]_0$. Since vertices cannot repeat in an Eulerian $k$-path, $0$ can only appear at most once in any $k$-path. Thus, we have the following types of $k$-paths and their lengths
    \begin{eqnarray}
        \text{len}(0,v_1,...,v_k)=2k-1=\text{len}(v_1,...,v_k,0), & & 2\leq k\leq n \label{star1}\\
        \text{len}(v_1,...,0,...,v_k)=2(k-1), & & 3\leq k\leq n \label{star2}\\
        \text{len}(v_0,v_1,...,v_k)=2k, & & 2\leq k\leq n-1,\; v_i\neq 0 \label{star3}
    \end{eqnarray}
    where Equations \ref{star1} and \ref{star2} also hold for $n$-paths, but Equation \ref{star3} cannot. For any $k$-path of length $2k-1$, there cannot exist a $k+1$-path of length $2k-1$ so that $\text{Im}\,\partial_{k+1,2k-1}=0$ and consequently $EMH_{k,2k-1}(S_n)=\ker\partial_{k,2k-1}$. Since the removal of any vertex other than $0$ decreases the length of a path, we have 
    \[\partial_{k,2k-1}(0,v_1,...,v_k)=0=\partial_{k,2k-1}(v_1,...,v_k,0).\]
    Thus finding the rank of $EMH_{k,2k-1}(S_n)$ amounts to counting all such Eulerian $k$-paths of type in Equation \ref{star1}. Recall that each vertex can only be included once in each Eulerian $k$-path. Then counting the Eulerian $k$-paths of type in Equation \ref{star1} is the same as counting two times the number of permutations of $[n]$ of length $k$. Therefore, 
    \[\text{rank}(EMH_{k,2k-1}(S_n))=\text{rank}(\partial_{k,2k-1})=2\cdot n\downarrow_k.\]
    For any $k$-path of length $2(k-1)$, we have
    \[\partial_{k,2(k-1)}(v_1,...,0,...,v_k)=(-1)^i(v_1,...,v_k)\]
    where $0$ is in the $i$th position for $1<i<k$. Again, there cannot exist any $k+1$-paths of length $2(k-1)$ so $\text{Im}\,\partial_{k+1,2(k-1)}=0$, which again yields $EMH_{k,2(k-1)}(S_n)=\ker\partial_{k,2(k-1)}$. Now we can compute the rank of $\ker\partial_{k,2(k-1)}$ in the following way. For each $k-1$-path $v=(v_1,...,v_k)$ of length $2(k-1)$ there are $k-1$ possible $k$-paths of the same length which have $v$ as a boundary with the appropriate sign. Hence, a generator of $\ker\partial_{k,2(k-1)}$ must be some sum of such $k$-paths. To find how many such sums there are, we can consider the homomorphism $\varphi:\Z^{k-1}\to \Z$ which sends $(0,...,1,...,0)\mapsto(-1)^i$ for $1$ in the $i$th position in the tuple. Here $\Z^{k-1}\cong\Z\langle(v_1,...,0,...,v_k)\rangle$ and $\Z\cong\Z\langle(v_1,...,v_k)\rangle$ so that $\varphi$ mimics the way that $k$-paths of the form $(v_1,...,0,...,v_k)$ get sent to the $k-1$-path $(v_1,...,v_k)$ under the boundary homomorphism $\partial_{k,2(k-1)}$. Thus, the rank of $\ker\varphi$ yields the number of generators in $\ker\partial_{k,2(k-1)}$ which are sums of $k$-paths of the form $(v_1,...,0,..,v_k)$ where each $v_i$ are fixed and the position of $0$ varies between $v_1$ and $v_k$. The rank of $\ker\varphi$ can be easily computed using the fact that $\im\varphi\cong\Z$ is a projective $\Z$-module. Hence, the short exact sequence
    \[0\to\ker\varphi\to\Z^{k-1}\to\text{Im}\,\varphi\to 0\]
    splits, which implies $\text{rank}(\ker\varphi)=k-2$. This yields that the rank of $\ker\partial_{k,2(k-1)}$ is $\text{rank}(\ker\varphi)$ times the number of $k-1$-paths $(v_1,...,v_k)$, and this is the same as $k-2$ times the number of permutations of $[n]$ of length $k$. Therefore
    \[\text{rank}(EMH_{k,2(k-1)}(S_n))=\text{rank}(\ker\partial_{k,2(k-1)})=(k-2)\cdot n\downarrow_k.\]
    The only exception to the above, is when $k=2$ and $\ell=2$. But $S_n$ is a tree so that $EMH_{2,2}(S_n)$ must be trivial according to Proposition \ref{tree_diag}. Finally, for any $k$-path of length $2k$, $\partial_{k,2k}(v_0,v_1,...,v_k)=0$ so that $\ker\partial_{k,2k}=EMC_{k,2k}(S_n)$. However, $\partial_{k+1,2k}(v_1,...,0,...,v_k)=(-1)^i(v_1,...,v_k)$ where $0$ is in the $i$th position in the $k+1$-path, so that $\text{Im}\,\partial_{k+1,2k}=EMC_{k,2k}(S_n)$. Thus, $EMH_{k,2k}(S_n)=0$.
\end{proof}

Giusti and Menara  show that $EMH_{k,k}(G)=0$ for $k\geq 2$ then $DMH_{k,k}(G)\cong MH_{k,k}(G)$ for all $k\geq 5$. Since the Eulerian magnitude homology groups of the star trees are torsion-free, we can use Lemma \ref{dmh_lemma} to compute the rest. 

\begin{cor}
    The discriminant magnitude homology groups of star trees are torsion-free and the ranks are determined by the following formulas: 
    \[\text{rank}(DMH_{k,\ell}(S_n))=\begin{cases}
        2(n+n\downarrow_2) & k=\ell=3 \\
        2n+n\downarrow_3 & k=\ell=4 \\
        2n & k=\ell=2,\,k=\ell\geq 5 \\
        0 & \text{otherwise.}
    \end{cases}\]
\end{cor}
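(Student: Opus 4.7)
The plan is to combine three ingredients: the rank formula for $EMH_{k,\ell}(S_n)$ established in the preceding theorem, Corollary \ref{tree_corollary} applied to the tree $T = S_n$, and a direct computation of $\text{rank}(MH_{k,k}(S_n))$ for every $k \geq 2$. Since the preceding theorem shows that every $EMH_{k,\ell}(S_n)$ is torsion-free, the freeness hypothesis of Corollary \ref{tree_corollary} is automatically satisfied. Invoking that corollary immediately gives $DMH_{k,\ell}(S_n) = 0$ for $k \neq \ell$, the identification $DMH_{k,k}(S_n) \cong MH_{k,k}(S_n)$ when $k = 2$ or $k \geq 5$, and the rank equality
\[
\text{rank}(DMH_{k,k}(S_n)) = \text{rank}(MH_{k,k}(S_n)) + \text{rank}(EMH_{k-1,k}(S_n))
\]
for $k = 3, 4$. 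Torsion-freeness of $DMH_{k,\ell}(S_n)$ follows because each identified summand is torsion-free.

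The main computational task is therefore to show $\text{rank}(MH_{k,k}(S_n)) = 2n$ for every $k \geq 2$. First, since each consecutive distance in a $k$-path of length $k$ is at least $1$ and they sum to exactly $k$, every such path has all distances equal to $1$, which forces the vertex sequence to alternate between the center $0$ and the leaves. Hence the generators of $MC_{k,k}(S_n)$ split into two families,
\[
\alpha = (0, v_1, 0, v_3, 0, \ldots) \quad \text{and} \quad \beta = (v_0, 0, v_2, 0, v_4, \ldots),
\]
with the leaf positions freely chosen from $[n]$. Second, $MC_{k+1,k}(S_n) = 0$ because a $(k+1)$-path has $k+1$ distances each at least $1$, summing to at least $k+1$; consequently $MH_{k,k}(S_n) = \ker \partial_{k,k}$.

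Next I would analyze $\partial_{k,k} = \sum_{i=1}^{k-1} (-1)^i \partial^i$ on each family. In any alternating path, removing a leaf at an interior position produces a sequence with two consecutive $0$'s, which is invalid, so those $\partial^i$ vanish. Removing the central $0$ at an interior position produces a valid $(k-1)$-path of length $k$ provided the two flanking leaves differ; if they coincide, the result is again invalid. I would then argue that the nontrivial $\partial^i$ on $\alpha$-type paths all land in a family of generators of $MC_{k-1,k}(S_n)$ disjoint from the family hit by $\partial^i$ on $\beta$-type paths (distinguished by whether the last vertex is $0$ or a leaf), and that within each family the images are indexed uniquely by the original vertex data. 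This bookkeeping forces any kernel element to have $\partial^i = 0$ for every $i$, which in turn forces every pair of leaves flanking a $0$ to coincide. The resulting ``constant'' elements are
\[
(0, v, 0, v, \ldots) \quad \text{and} \quad (v, 0, v, 0, \ldots), \qquad v \in [n],
\]
yielding $\text{rank}(\ker \partial_{k,k}) = 2n$.

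Plugging $\text{rank}(EMH_{2,3}(S_n)) = 2\cdot n\downarrow_2$ and $\text{rank}(EMH_{3,4}(S_n)) = n\downarrow_3$ from the preceding theorem into the identifications above yields all four cases of the stated formula. The main obstacle I expect is the disjointness and injectivity bookkeeping in the kernel analysis: one must verify that distinct non-constant alternating paths cannot combine to cancel in the image. The argument is a direct extension of the case analysis already carried out in the preceding proof for $EMH_{k,2(k-1)}(S_n)$, tracking the collapse position and checking that the two alternation types land in disjoint bases of $MC_{k-1,k}(S_n)$, so no mixed cancellation can occur.
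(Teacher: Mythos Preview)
Your proposal is correct and matches the paper's approach: invoke Corollary~\ref{tree_corollary} for $T=S_n$, use the torsion-freeness and explicit ranks of $EMH_{\ast,\ast}(S_n)$ from the preceding theorem, and combine these with $\text{rank}(MH_{k,k}(S_n))=2n$. The only difference is that you supply a self-contained kernel computation for $\text{rank}(MH_{k,k}(S_n))=2n$ (which is valid, since each image generator in $MC_{k-1,k}(S_n)$ carries a unique leaf--leaf adjacency whose position pins down both the differential index and the preimage), whereas the paper simply takes this as known from the standard tree result $\text{rank}(MH_{k,k}(T))=2|E(T)|$.
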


Now we turn our attention to answering the question: are there graphs which have diagonal Eulerian magnitude homology? There are many different types of graphs which exhibit diagonal magnitude homology such as outerplanar graphs \cite{sazdanovic2021torsion}, pawful graphs \cite{gu2018graph}, trees \cite{hepworth2015categorifying, gu2018graph, asao2020geometric}, and the complete graphs \cite{hepworth2015categorifying}. However, after computing the Eulerian magnitude homology of all of these types, only one yielded a support only along the diagonal. 

\begin{thm}
    For a graph $G=(V,E)$, $\ell_\text{max}=k_\text{max}$ if and only if $G$ is a complete graph $K_n$. Furthermore, the Eulerian magnitude homology groups of $K_n$ are torsion free with $\text{rank}(EMH_{k,k}(K_n))=n\downarrow_k$.
\end{thm}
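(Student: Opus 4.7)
The plan is to prove the two claims in sequence. First I would establish the equivalence $\ell_{\max}=k_{\max}\iff G=K_n$ using the elementary bound $\text{len}(v)\geq k$ for any Eulerian $k$-path, which holds because each consecutive distance satisfies $d(v_i,v_{i+1})\geq 1$. The forward direction is straightforward: if $G=K_n$, then any ordering of all $n=|V|$ vertices is a valid Eulerian $k_{\max}$-path in which every consecutive pair is adjacent, so its length equals $k_{\max}=n-1$, giving $\ell_{\max}=k_{\max}$.

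For the reverse direction, suppose $\ell_{\max}=k_{\max}$. I would first observe that $G$ must be connected, since in any disconnected graph every Eulerian path lives inside a single connected component, forcing $\ell_{\max}\leq\max_i(n_i-1)<|V|-1=k_{\max}$ where $n_1,\dots,n_r$ are the component sizes. Connectedness then allows every permutation of $V(G)$ to serve as an Eulerian $k_{\max}$-path. For any pair $a\neq b\in V(G)$, arrange the permutation so that $a$ and $b$ are consecutive; the resulting path has length at least $(k_{\max}-1)+d(a,b)$, which exceeds $k_{\max}$ unless $d(a,b)=1$. Since $\ell_{\max}=k_{\max}$ caps the length of every Eulerian $k_{\max}$-path, $d(a,b)=1$ for every distinct pair, so $G=K_n$.

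For the structure of $EMH_{k,k}(K_n)$, the key remark is that on $K_n$ every Eulerian $k$-path has length exactly $k$, since all consecutive distances equal $1$. This has two consequences. First, $EMC_{k+1,k}(K_n)=0$, because any Eulerian $(k+1)$-path in $K_n$ has length $k+1>k$, so $\im\partial_{k+1,k}=0$. Second, for any $v=(v_0,\dots,v_k)\in EMC_{k,k}(K_n)$ and any interior index $1\leq i\leq k-1$, the shortened path has length
\[\text{len}(v^{\setminus i})=\text{len}(v)-d(v_{i-1},v_i)-d(v_i,v_{i+1})+d(v_{i-1},v_{i+1})=k-1,\]
which differs from $\ell=k$, so $\partial^i_{k,k}(v)=0$, and hence $\partial_{k,k}\equiv 0$ on $EMC_{k,k}(K_n)$.

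Combining these vanishings yields $EMH_{k,k}(K_n)=EMC_{k,k}(K_n)$, the free abelian group generated by the Eulerian $k$-paths of length $k$ in $K_n$; this is automatically torsion-free, and its rank is the count of ordered tuples of distinct vertices of the appropriate length, i.e., a falling factorial in $n$. The main subtlety lies in the reverse implication of the first equivalence, where one must extract connectedness of $G$ from $\ell_{\max}=k_{\max}$ and then exploit the freedom to place any prescribed vertex pair consecutively within an Eulerian $k_{\max}$-path to conclude that every pair of vertices is adjacent.
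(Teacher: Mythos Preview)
Your overall strategy matches the paper's: both argue that in $K_n$ every Eulerian $k$-path has length exactly $k$, so the chain groups vanish off the diagonal and $EMH_{k,k}(K_n)=EMC_{k,k}(K_n)$; and both handle the converse by noting that if $\ell_{\max}=k_{\max}$ then every $k_{\max}$-path must have length exactly $k_{\max}$, forcing each consecutive distance to be $1$. Your treatment of $\partial_{k,k}\equiv 0$ is in fact more explicit than the paper's (which simply cites a result of Giusti--Menara), and your device of placing a prescribed pair $a,b$ consecutively in a $k_{\max}$-path spells out what the paper compresses into a single ``hence''.

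There is, however, a genuine error in your connectedness step. The bound $\ell_{\max}\leq\max_i(n_i-1)$ is false: $\ell_{\max}$ is a \emph{length}, not a vertex count, and a component on $n_i$ vertices can support Eulerian paths of length well above $n_i-1$ (for instance in the path graph on three vertices $0\text{--}1\text{--}2$, the Eulerian $2$-path $(0,2,1)$ has length $3>2$). Worse, the conclusion you are trying to reach is itself false: take $G=P_3\sqcup\{\text{pt}\}$, a disconnected graph with $|V|=4$, so $k_{\max}=3$, while $\ell_{\max}=3$ is attained by $(0,2,1)$ inside the $P_3$ component. Thus $\ell_{\max}=k_{\max}$ holds yet $G\neq K_4$. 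The theorem as literally stated therefore needs a connectedness hypothesis; the paper's proof tacitly assumes it (the converse argument only makes sense if every permutation of $V$ is actually a $k_{\max}$-path), and no argument can extract connectedness from $\ell_{\max}=k_{\max}$ alone. Once connectedness is assumed, the remainder of your proof is correct and essentially identical to the paper's.
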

\begin{proof}
    Giusti and Menara \cite{giusti2024eulerian} state that walks around cliques always have zero differential. Since every $k$-path in a complete graph $K_n$ cannot have any other length besides $k$, we get $EMC_{k,\ell}(K_n)$ is trivial for all $\ell\neq k$. Therefore, $K_n$ has nontrivial Eulerian magnitude homology only on its diagonal, which implies that $EMH_\text{max}(K_n)$ is on the diagonal and thus $k_\text{max}=\ell_\text{max}$. Conversely, assume that for a graph $G$, $\ell_\text{max}=k_\text{max}$. Then every $k_\text{max}$-path must be of length $k_\text{max}$. Hence, for every pair of vertices $a,b\in V$ we must have $\{a,b\}\in E$. Therefore, $G=K_{k_\text{max} +1}$. 

    Now for a complete graph $K_n$, since $EMC_{k,\ell}(K_n)=0$ for all $\ell\neq k$, we must have $EMC_{k,k}(K_n)=EMH_{k,k}(K_n)$. Thus, counting the rank of $EMH_{k,k}(K_n)$ amounts to counting all the Eulerian $k$-paths of $K_n$ which is the number of permutations of $[n]$ of length $k$. 
\end{proof}

\begin{cor}\label{Euler_diag}
    A graph $G$ has diagonal Eulerian magnitude homology if and only if $G$ is a complete graph.
\end{cor}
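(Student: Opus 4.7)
The plan is to derive the corollary directly from the preceding theorem, which already contains essentially all of the content. Diagonal Eulerian magnitude homology means $EMH_{k,\ell}(G)=0$ whenever $k\neq\ell$, so the task is to translate between this statement and the characterization $\ell_{\max}=k_{\max}\iff G=K_n$ established in the theorem.

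For the forward direction, suppose $G$ has diagonal Eulerian magnitude homology. Recall that the previous section established $EMH_{\max}(G)=EMH_{k_{\max},\ell_{\max}}(G)$ is always nontrivial. Since diagonality forces every nontrivial Eulerian magnitude homology group to lie in bidegrees $(k,k)$, we must have $\ell_{\max}=k_{\max}$. The theorem then immediately yields $G\cong K_n$ for some $n$.

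For the reverse direction, if $G=K_n$, then $d(a,b)=1$ for any two distinct vertices $a,b\in V(K_n)$. Consequently every Eulerian $k$-path has length exactly $k$, so the chain groups $EMC_{k,\ell}(K_n)$ vanish whenever $\ell\neq k$. Since the Eulerian magnitude chain complex itself is trivial off the diagonal, so is its homology, and $K_n$ therefore has diagonal Eulerian magnitude homology.

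Because both implications reduce in one step to statements proved in the theorem immediately preceding this corollary, there is no substantial obstacle; the only thing to be careful about is citing the nontriviality of $EMH_{\max}(G)$ to ensure that the $(k_{\max},\ell_{\max})$-bidegree of the ``top'' homology group is actually realized, which is what lets us conclude $k_{\max}=\ell_{\max}$ from the diagonality hypothesis.
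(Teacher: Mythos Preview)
Your proof is correct and follows essentially the same approach as the paper's, which simply records the chain of equivalences ``$G$ diagonal $\iff \ell_{\max}=k_{\max}\iff G=K_{k_{\max}+1}$'' without further comment. You have spelled out the justification for each step, in particular invoking the nontriviality of $EMH_{\max}(G)$ to pass from diagonality to $\ell_{\max}=k_{\max}$, which is exactly what the terse paper proof leaves implicit.
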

\begin{proof} Notice the following sequence of equivalences:  $G$  is diagonal if and only  $\ell_\text{max}=k_\text{max}$ if and only if $G=K_{k_\text{max}+1.}$ 
\end{proof}

This result is analogous to the fact that graphs which are diagonal in magnitude homology 
have girth $\text{gir}(G)=3,4,$ or infnity.  
 \cite{asao2024girth}.

\begin{cor}
    The discriminant magnitude homology groups of the complete graphs are supported only on the diagonal and are torsion-free with the ranks determined by the formula 
    \[\text{rank}(DMH_{k,k}(K_n))=n(n-1)^k-n\downarrow_k\] for $k\geq 2$. 
\end{cor}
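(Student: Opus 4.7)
The crux is the rigidity of path lengths in $K_n$: since every pair of distinct vertices of $K_n$ lies at distance exactly $1$, any $k$-path $(v_0,\ldots,v_k)$, which by definition satisfies $v_i \neq v_{i+1}$, automatically has length $\sum_{i=0}^{k-1} d(v_i,v_{i+1}) = k$. The plan is to leverage this to show
\[MC_{k,\ell}(K_n) = EMC_{k,\ell}(K_n) = DMC_{k,\ell}(K_n) = 0 \quad \text{whenever } \ell \neq k,\]
from which the off-diagonal vanishing $DMH_{k,\ell}(K_n) = 0$ for $\ell \neq k$ is immediate.

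For the diagonal groups, I would fix $\ell = k$ and inspect the chain complex $(DMC_{\bullet,k}(K_n),\partial_{\bullet,k})$: both $DMC_{k+1,k}(K_n)$ and $DMC_{k-1,k}(K_n)$ vanish by the observation above, so the incoming and outgoing differentials at $DMC_{k,k}(K_n)$ are zero, yielding $DMH_{k,k}(K_n) = DMC_{k,k}(K_n)$. The same argument gives $EMH_{k,k}(K_n) = EMC_{k,k}(K_n)$. Since $EMC_{k,k}(K_n) \hookrightarrow MC_{k,k}(K_n)$ is an inclusion of free abelian groups whose basis (the Eulerian $k$-paths of length $k$) is a subset of the basis of the ambient group (all $k$-paths of length $k$), the quotient $DMC_{k,k}(K_n) = MC_{k,k}(K_n)/EMC_{k,k}(K_n)$ is free, and hence $DMH_{k,k}(K_n)$ is torsion-free.

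The rank computation then reduces to a direct count: $\text{rank}(MC_{k,k}(K_n)) = n(n-1)^k$ because a $k$-path in $K_n$ is determined by $n$ choices for $v_0$ and $n-1$ choices for each subsequent vertex (anything other than its immediate predecessor), while $\text{rank}(EMC_{k,k}(K_n)) = n\downarrow_k$ by the preceding theorem. Subtracting gives $\text{rank}(DMH_{k,k}(K_n)) = n(n-1)^k - n\downarrow_k$. There is essentially no substantive obstacle: the chain-level vanishing above bypasses the long exact sequence entirely, so no connecting homomorphism needs to be analyzed, and the only minor care-point is double-checking that every $k$-path in a complete graph is forced to have length exactly $k$, which is immediate from the fact that $d(v,w) = 1$ for all distinct $v,w \in V(K_n)$.
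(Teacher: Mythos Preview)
Your proof is correct. Both you and the paper begin from the same key observation---that $MC_{k,\ell}(K_n)$, $EMC_{k,\ell}(K_n)$, and hence $DMC_{k,\ell}(K_n)$ vanish whenever $\ell\neq k$---so the off-diagonal vanishing and the identification $DMH_{k,k}(K_n)=DMC_{k,k}(K_n)$ are common ground. Where you diverge is in packaging the diagonal computation: the paper invokes the long exact sequence to extract a short exact sequence
\[0\to EMH_{k,k}(K_n)\to MH_{k,k}(K_n)\to DMH_{k,k}(K_n)\to 0,\]
observes it splits because $DMH_{k,k}(K_n)$ is free, and subtracts ranks. You instead stay entirely at the chain level, noting that $DMC_{k,k}(K_n)=MC_{k,k}(K_n)/EMC_{k,k}(K_n)$ is free because the Eulerian basis is literally a subset of the full basis, and then subtract. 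Your route is slightly more elementary---it never needs the long exact sequence or a splitting argument---and it also makes the torsion-freeness claim more transparent, since the paper's one-line inference from ``supported on the diagonal'' to ``torsion-free'' really rests on exactly the chain-level quotient description you spell out. The paper's route, on the other hand, illustrates how the general long exact sequence machinery specializes here, which is thematically consistent with how the surrounding section handles trees and star graphs.
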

\begin{proof}
    Since $MC_{k,\ell}(K_n)=\varnothing=EMC_{k,\ell}(K_n)$ for all $k\neq\ell$, it follows that $DMC_{k,\ell}(K_n)=\varnothing$ for all $k\neq\ell$. Hence, the discriminant magnitude homology of $K_n$ must be supported only along the diagonal and will thus also be torsion-free. The long exact sequence for magnitude homology yields
    \[0\to EMH_{k,k}(K_n)\to MH_{k,k}(K_n)\to DMH_{k,k}(K_n)\to0\]
    which splits since $DMH_{k,k}(K_n)$ is free abelian. Therefore, 
    \[\text{rank}(DMH_{k,k}(K_n))=\text{rank}(MH_{k,k}(K_n))-\text{rank}(EMH_{k,k}(K_n)),\]
    which completes the proof.
\end{proof}

Corollary \ref{Euler_diag} implies that outerplanar graphs and pawful graphs do not have diagonal Eulerian magnitude homology and diagonality is not a property that transfers over from magnitude homology to Eulerian magnitude homology. In general, many of the nice properties exhibited by magnitude homology do not transfer over to Eulerian magnitude homology. Consider the star trees $S_4$ and $S_2$. Notice that $S_4$ can be seen as two $S_2$ trees wedged together at the vertex $0$. In ordinary magnitude homology we would have $MH_{k,\ell}(S_4)\cong MH_{k,\ell}(S_2)\oplus MH_{k,\ell}(S_2)$. However, we can easily see that this cannot be true for Eulerian magnitude homology since $\text{rank}(EMH_{2,3}(S_2))=4$ while $\text{rank}(EMH_{2,3}(S_4))=24$. In particular, this shows that the Mayer-Vietoris sequence does not hold in general for Eulerian magnitude homology. In Figures \ref{fig:whitney1} and \ref{fig:whitney2} we also exhibit graphs $H$ and $K$ which differ by a Whitney twist but present two varying Eulerian magnitude homologies; showing that although ordinary magnitude homology is invariant under Whitney twists, Eulerian magnitude homology is not.

\begin{figure}[h]
\centering
    \begin{subfigure}{0.38\textwidth}
    \begin{tabular}{c}
    \def\arraystretch{1.25}
        \begin{tabular}{|c|c|c|c|c|c|}
         \hline
           \diagbox{$\ell$}{$k$} & 0 & 1 & 2 & 3 & 4  \\
            \hline
            0 & $\Z^5$ &   &   &   &      \\
            \hline
            1 &   &$\Z^{10}$&   &   &       \\
            \hline
            2 &   &   & $\Z^6$ &   &       \\
            \hline
            3 &   &   & $\Z^{20}$ &   &       \\
            \hline
            4 &   &   &   & $\Z^{40}$ &       \\
            \hline
            5 &   &   &   & $\Z^{32}$ & $\Z^{12}$     \\
            \hline
            6 &   &   &   &   & $\Z^{60}$     \\
            \hline
            7 &   &   &   &   & $\Z^{24}$  \\ \hline   
        \end{tabular}
    \end{tabular}
    \caption{$EMH_{k,\ell}
    (H)$}\label{fig:whitney1}
    \end{subfigure}%
    \begin{subfigure}{0.19\textwidth}
    \centering
 \begin{tikzpicture}
        \node[circle, draw, fill=black, scale=0.3] (1) at (0,1) {};
        \node[circle, draw, fill=black, scale=0.3] (2) at (-0.866,.5) {};
        \node[circle, draw, fill=black, scale=0.3] (3) at (0,0) {};
        \node[circle, draw, fill=black, scale=0.3] (4) at (-0.866,1.5) {};
        \node[circle, draw, fill=black, scale=0.3] (5) at (0.866,-0.5) {};       
        \draw (1) -- (2) -- (3) -- (1);
        \draw (1) -- (4);
        \draw[red] (3) -- (5);

        \node[circle, draw, fill=black, scale=0.3] (1) at (0,4) {};
        \node[circle, draw, fill=black, scale=0.3] (2) at (-0.866,3.5) {};
        \node[circle, draw, fill=black, scale=0.3] (3) at (0,3) {};
        \node[circle, draw, fill=black, scale=0.3] (4) at (-0.866,4.5) {};
        \node[circle, draw, fill=black, scale=0.3] (5) at (0.866,4.5) {};
            
        \draw (1) -- (2) -- (3) -- (1);
        \draw (1) -- (4);
        \draw[red] (1) -- (5);
    \end{tikzpicture}
    \caption{Graphs $H$ and $K$.}\label{fig:whitneyFL}
    \end{subfigure}
\begin{subfigure}{0.4\textwidth} 
 \begin{tabular}{c}
    \def\arraystretch{1.15}
        \begin{tabular}{|c|c|c|c|c|c|} \hline
        \def\arraystretch{1.25}
               \diagbox{$\ell$}{$k$} & 0 & 1 & 2 & 3 & 4  \\
            \hline
            0 & $\Z^5$ &   &   &   &      \\
            \hline
            1 &   &$\Z^{10}$&   &   &       \\
            \hline
            2 &   &   & $\Z^6$ &   &       \\
            \hline
            3 &   &   & $\Z^{16}$ &   &       \\
            \hline
            4 &   &   &   & $\Z^{28}$ &       \\
            \hline
            5 &   &   &   & $\Z^{16}$ &      \\
            \hline
            6 &   &   &   &$\Z^{4}$& $\Z^{28}$     \\
            \hline
            7 &   &   &   &$\Z^{2}$& $\Z^{36}$ \\
            \hline
            8 &   &   &   &    & $\Z^{14}$ \\ \hline
        \end{tabular}
         \end{tabular}
    \caption{$EMH_{k,\ell}(K)$}
    \label{fig:whitney2}
    \end{subfigure}
    \caption{Graphs $H$ and $K$ related by a Whitney flip (B) and their Eulerian Magnitude homology (A) and (C) respectively.}
\end{figure}
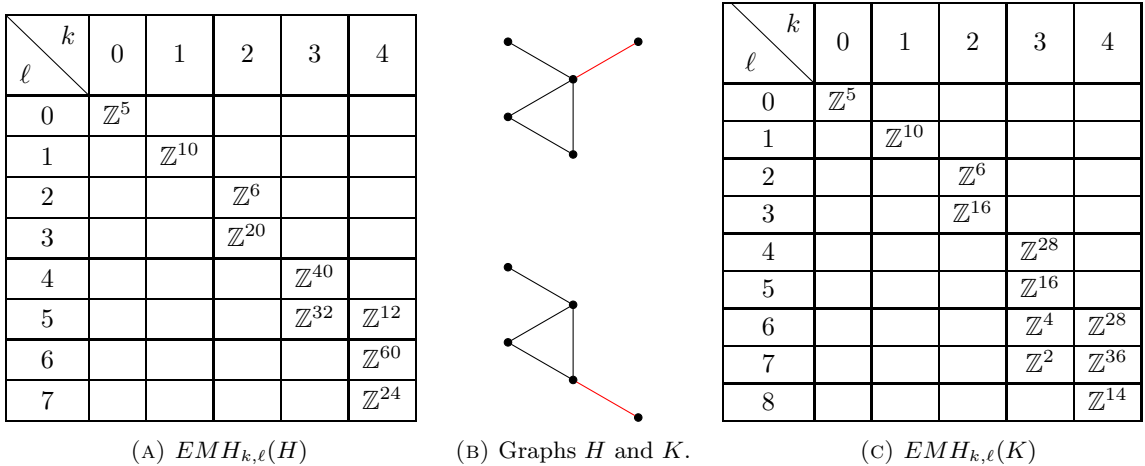

\section{Conjectures and future directions}\label{Conj}

Computational experiments using a program written in SageMath \cite{Martin_Magnitude_Homology} have revealed intriguing patterns in the relationship between magnitude homology and Eulerian magnitude homology. Specifically, our computations show that $MH_{4,5}(G_4^{(1\,2\,3)})$ 
contains torsion while $EMH_{4,5}(G_4^{(1\,2\,3)})$
 is torsion-free, providing an explicit example of a graph with torsion in its magnitude homology but not in its Eulerian magnitude homology for the same bi-degree.

Further computations for the family of graphs $G_k^\sigma$  suggest a potential pattern: torsion appears to persist along the second diagonal in magnitude homology. This raises the question of whether torsion in a specific bi-grading 
$(k,\ell)$  necessarily implies torsion along the entire diagonal for that bi-grading. However, our computations also show that for $\mathcal{G}(\mathbb{RP}^2)$
the graph of the face poset of a triangulation of  
$\mathbb{RP}^2$, $MH_{4,5}(\mathcal{G}(\mathbb{RP}^2))$  is torsion-free. This suggests that while some graphs exhibit persistent torsion along a diagonal, others—despite having torsion—do not follow this pattern.

These observations motivate several important directions for future research. A key open problem is to determine what structural properties of graphs influence the presence or persistence of torsion in their magnitude homology. Furthermore, we aim to further investigate the interplay between magnitude homology, Eulerian magnitude homology, and discriminant magnitude homology, seeking deeper connections between these invariants. Understanding how these different homology theories encode graph structure could provide valuable insights into their relationships and potential applications.

Based on the computations for the star trees $S_n$ that rely heavily on Corollary \ref{tree_corollary} we propose the following conjecture. 

\begin{conj}\label{strenghten_tree}
    For a any tree $T$, 
    $\text{rank}(DMH_{k,k}(T))=\text{rank}(MH_{k,k}(T))+\text{rank}(EMH_{k-1,k}(T))$
    for $k=3,4$ and $DMH_{k,k}(T)\cong MH_{k,k}(T)$ for $k=2$ and $k\geq 5$, while $DMH_{k,\ell}(T)=0$ for all $k\neq\ell$.
\end{conj}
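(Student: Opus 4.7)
The conjecture asserts exactly the conclusion of Corollary \ref{tree_corollary} but without the hypothesis that $EMH_{2,3}(T)$ and $EMH_{3,4}(T)$ be free. Hence the plan is to prove unconditionally that these two groups are torsion-free for every tree $T$, after which the conjecture follows verbatim from Corollary \ref{tree_corollary}, which in turn rests on Lemma \ref{dmh_lemma} and Proposition \ref{tree_diag}. Using the direct sum decomposition $EMH_{k,\ell}(T)=\bigoplus_{a\neq b}EMH_{k,\ell}(a,b)$, it suffices to prove torsion-freeness for each fixed ordered pair $(a,b)$ of distinct vertices, and I would pass to the Eulerian Asao-Izumihara model $EMH_{k+2,\ell}(a,b)\cong H_k(ET_{\leq\ell}(a,b),ET_{\leq\ell-1}(a,b))$ throughout.

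For $EMH_{2,3}(a,b)$, the relative pair $(ET_{\leq 3}(a,b),ET_{\leq 2}(a,b))$ is $0$-dimensional: its zero-cells correspond to Eulerian $2$-paths $(a,v,b)$ of length exactly $3$. Since relative $H_0$ of a pair of discrete spaces is always free, $EMH_{2,3}(a,b)$ is free abelian, and the rank can be read off by splitting on $d(a,b)\in\{1,3\}$, the only tree-compatible distances that admit a length-$3$ Eulerian $2$-path.

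For $EMH_{3,4}(a,b)$ the relative pair $(ET_{\leq 4}(a,b),ET_{\leq 3}(a,b))$ can carry $1$-simplices, and I would do a case split on $d(a,b)\in\{1,2,4\}$, the only tree-compatible distances for a length-$4$ Eulerian $3$-path. In each case the rigidity of the tree---unique geodesics, no cycles---lets one describe $ET_{\leq 4}(a,b)$ explicitly as a $1$-complex built from the geodesic between $a$ and $b$ together with the branch subtrees hanging off each interior vertex, and the $1$-chains representing relative cycles admit a clean combinatorial normal form indexed by which branch is traversed at each position.

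The main obstacle is verifying that the resulting relative $H_1$ is torsion-free when $d(a,b)\in\{1,2\}$: the potential mechanism for $\Z_p$-torsion would be a relative $1$-cycle that bounds a $2$-chain only up to a multiplicity greater than one, which is exactly how torsion arises in the Kaneta-Yoshinaga construction of Section \ref{tor}. For trees, however, each $2$-simplex $\{(v_1,1),(v_2,2),(v_3,3)\}$ corresponds to an Eulerian $4$-path $(a,v_1,v_2,v_3,b)$, and the acyclicity of $T$ severely restricts how such $2$-simplices can share $1$-faces: any two $2$-simplices that meet in a $1$-face must agree in three of the four branch choices. I expect this constraint yields an explicit chain-level splitting, so that a careful bookkeeping---perhaps rooted at the unique geodesic between $a$ and $b$ and performed by induction on the sizes of the branch subtrees---produces a free generating set for $H_1$ and completes the proof.
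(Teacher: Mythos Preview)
This statement is presented in the paper as a \emph{conjecture}; the authors give no proof, only computational evidence from star trees. So there is no paper proof to compare against --- you are attempting to settle an open question.

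Your reduction is correct: the conjecture is precisely Corollary~\ref{tree_corollary} with its freeness hypotheses removed, so it suffices to show that $EMH_{2,3}(T)$ and $EMH_{3,4}(T)$ are free abelian for every tree $T$. This is the right starting point.

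For $EMH_{2,3}(T)$, your conclusion is correct but your justification is slightly off. The relative pair $(ET_{\leq 3}(a,b),ET_{\leq 2}(a,b))$ is \emph{not} $0$-dimensional in general: when $d(a,b)=3$ the unique geodesic $(a,v_1,v_2,b)$ is an Eulerian $3$-path of length~$3$, giving a $1$-simplex. What actually saves you is the general fact that relative $H_0$ of any simplicial pair is free abelian (it is the cokernel of $H_0(A)\to H_0(X)$, and the image of this map is the span of a subset of the basis of $H_0(X)$). So $EMH_{2,\ell}(G)$ is free for \emph{every} graph $G$ and every $\ell$ --- nothing about trees is needed here.

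For $EMH_{3,4}(T)$, you correctly identify this as the crux, but what you have written is a plan, not a proof. The phrases ``I expect this constraint yields an explicit chain-level splitting'' and ``perhaps\ldots by induction'' mark exactly where the argument is missing. The case split on $d(a,b)\in\{1,2,4\}$ is the right organization, and the rigidity of trees does severely constrain how $2$-simplices can share faces; but you have not actually exhibited the free basis, verified that the relative $1$-cycles admit the claimed normal form, or ruled out the torsion mechanism you describe. Until that bookkeeping is carried out --- and it is genuinely the entire content of the conjecture --- this remains a proof outline rather than a proof.
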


While doing the computations for star trees, we also noticed an interesting recurrence relationship between the Eulerian magnitude homology groups for path trees $P_n=([n]_0,\{\{i,i+1\}_{i\in[n-1]_0}\})$. 

\begin{conj}
    For the path trees $P_n$, the Eulerian magnitude homology groups satisfy the following recurrence relation
    \begin{equation}\label{path_formula}
        \text{rank}(EMH_{k,\ell}(P_n))=(n-k+1)\text{rank}(EMH_{k,\ell}(P_k))
    \end{equation}
    for $0\leq k\leq n$ and for all $\ell\geq 0$.
\end{conj}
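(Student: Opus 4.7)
The plan is to prove the conjecture by decomposing $EMC_{\bullet,\ell}(P_n)$ as a direct sum of subcomplexes indexed by the minimum and maximum vertex used by a path, and then reducing the answer to $n-k+1$ shifted copies of $EMH_{k,\ell}(P_k)$. The key initial observation is that the boundary operator preserves both the minimum $m(v)$ and the maximum $M(v)$ of the vertex set $\{v_0,\ldots,v_k\}$ of a path. Indeed, $\partial^i(v_0,\ldots,v_k)\neq 0$ requires $v_i$ to lie strictly between $v_{i-1}$ and $v_{i+1}$ in the line $P_n$, so $v_i$ can be neither the smallest nor the largest vertex of the path. Hence for each pair $(m,M)$ with $0\le m\le M\le n$ and $M-m\ge k$, the subspace $C^{(m,M)}_\bullet$ spanned by paths whose vertex set has min $m$ and max $M$ is a subcomplex, and $EMC_{\bullet,\ell}(P_n)=\bigoplus_{(m,M)}C^{(m,M)}_\bullet$, so homology splits along the same decomposition.

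Next I would compute each summand in two regimes. When $M-m=k$, every Eulerian $k$-path in $C^{(m,m+k)}_\bullet$ uses all $k+1$ vertices of the consecutive interval $\{m,\ldots,m+k\}$, so the shift $v\mapsto v-m$ identifies $C^{(m,m+k)}_k$ with $EMC_{k,\ell}(P_k)$. Moreover $C^{(m,m+k)}_{k+1}=0$, because a $(k+2)$-tuple of distinct vertices cannot fit inside a set of size $k+1$, forcing $H_k(C^{(m,m+k)}_\bullet)\cong\ker\partial_k\cong EMH_{k,\ell}(P_k)$. There are exactly $n-k+1$ such consecutive pairs $(m,m+k)$, giving $n-k+1$ copies of $EMH_{k,\ell}(P_k)$.

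The heart of the argument is then to show that for every non-consecutive pair, i.e.\ $M-m>k$, the contribution to homology in degree $k$ vanishes:
\[
H_k\bigl(C^{(m,M)}_\bullet\bigr)=0 \quad\text{whenever}\ M-m>k.
\]
After translating to $m=0$ and writing $n':=M>k$, this becomes the claim that the subcomplex $C^{(0,n')}_\bullet\subseteq EMC_{\bullet,\ell}(P_{n'})$ of paths using both $0$ and $n'$ is acyclic in all degrees below its top degree $n'$. I would establish this by discrete Morse theory. Given a non-top cell $\pi=(v_0,\ldots,v_j)\in C^{(0,n')}_j$ with $j<n'$, some vertex $c\in\{1,\ldots,n'-1\}$ is missing from $\pi$, and since $\pi$ visits both $0$ and $n'$ at least one consecutive pair $(v_{i-1},v_i)$ straddles $c$ in $P_{n'}$, so inserting $c$ between them preserves length. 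Pairing $\pi$ with the unique coface produced by a canonical insertion rule then yields an acyclic matching whose only critical cells sit in degree $n'$, giving the desired vanishing. Combining the two regimes gives
\[
EMH_{k,\ell}(P_n)\cong\bigoplus_{m=0}^{n-k} EMH_{k,\ell}(P_k),
\]
and taking ranks yields \eqref{path_formula}.

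The main obstacle is the explicit construction and verification of the Morse matching in the non-consecutive case: naive rules such as ``insert the smallest missing vertex at the leftmost valid position'' produce collisions, for example both $(0,1,3)$ and $(0,2,3)$ target the same coface $(0,1,2,3)$ in $P_3$. Resolving these collisions requires a tiebreaker that bundles the choice of pivot vertex with an additional bit of information, such as the position of $0$ in the path or an alternating ``from the left vs.\ from the right'' rule; verifying acyclicity of the resulting matching then rests on exhibiting a monotonicity invariant for the smallest-missing-vertex statistic along the modified Hasse diagram. A cleaner back-up strategy would be to produce an explicit contracting chain homotopy $h\colon C^{(0,n')}_\bullet\to C^{(0,n')}_{\bullet+1}$ satisfying $\partial h+h\partial=\pm\mathrm{id}$ below top degree; preliminary computations on $P_3$ show that inserting a fixed pivot vertex achieves $\pm\mathrm{id}$ on most cells but produces correction terms that must be absorbed by refining $h$ to depend on the local structure of the path near its $0$-entry.
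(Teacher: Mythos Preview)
The paper states this as an open \emph{conjecture} in the final section and offers no proof, so there is nothing on the paper's side to compare your argument against.

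On its own merits, your reduction is correct and genuinely useful. In $P_n$ the differential $\partial^i$ is nonzero only when $v_i$ lies strictly between $v_{i-1}$ and $v_{i+1}$ on the line, so the minimum and maximum of the vertex set are indeed preserved; the resulting decomposition $EMC_{\bullet,\ell}(P_n)=\bigoplus_{(m,M)}C^{(m,M)}_\bullet$ is valid. Your identification of the consecutive summands $C^{(m,m+k)}_\bullet$ with shifted copies of $EMC_{\bullet,\ell}(P_k)$ in degree $k$, together with $C^{(m,m+k)}_{k+1}=0=EMC_{k+1,\ell}(P_k)$, correctly gives $H_k(C^{(m,m+k)}_\bullet)\cong EMH_{k,\ell}(P_k)$. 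At this point you have already proved the inequality
\[
\text{rank}(EMH_{k,\ell}(P_n))\ \ge\ (n-k+1)\,\text{rank}(EMH_{k,\ell}(P_k))
\]
and reduced the full conjecture to the single acyclicity statement $H_k(C^{(0,n')}_\bullet)=0$ for $n'>k$.

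That remaining step, however, is a genuine gap, and you are right to flag it. The naive ``insert the smallest missing vertex'' matching does collide exactly as you describe, and neither the proposed tiebreaker nor the contracting homotopy has been carried out; the sketches you give (monotonicity of a smallest-missing statistic, a homotopy depending on the position of $0$) are plausible heuristics but not arguments. Until an explicit acyclic matching or an explicit $h$ with $\partial h+h\partial=\pm\mathrm{id}$ in degrees below $n'$ is written down and checked, what you have is a clean reformulation of the conjecture rather than a proof of it.
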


Thus, in order to classify the Eulerian magnitude homology groups of the path trees $P_n$, one only needs to come up with the initial conditions: i.e. find formulas for the ranks of the Eulerian magnitude homology groups $EMH_{n,\ell}(P_n)$ for $n\leq\ell\leq\ell_{\text{max}}$. It may even be useful to come up with a way to determine $\ell_{\text{max}}$ for $P_n$ given $n\geq 1$.

\bibliographystyle{plain}
\bibliography{refs}
\end{document}